\documentclass{article}
\usepackage[nottoc]{tocbibind}
\usepackage[titletoc,title]{appendix}
\usepackage[utf8]{inputenc}

\usepackage[pdftex]{graphicx}
\usepackage[pdftex,dvipsnames,usenames]{color}

\usepackage{enumitem}
\usepackage{mathtools}

\usepackage{cancel}

\usepackage[english]{babel}
\usepackage[hyperindex,breaklinks]{hyperref}

\hypersetup{
  colorlinks   = true, 
  urlcolor     = blue, 
  linkcolor    = blue, 
  citecolor   = red 
}

\usepackage{amsmath,amssymb,amsfonts,mathrsfs, amsthm}

\usepackage{graphicx}
\usepackage[margin=0.75in]{geometry}

\usepackage{soul}

\usepackage{pdfpages}

\usepackage{lipsum}
\usepackage{csquotes}
\usepackage{thmtools}

\usepackage[
  backend=biber,
  style=alphabetic,
  maxnames=5,       
  minnames=5,       
  maxbibnames=5,    
  minbibnames=5,     
  maxalphanames=5,  
  minalphanames=5
]{biblatex}
\usepackage{amsfonts,amsmath,amstext,amssymb,amsthm}
\usepackage{enumitem}
\usepackage[english]{babel}
\usepackage{fancybox}
\usepackage{xspace}
\usepackage{amscd}
\usepackage[all]{xy}
\usepackage[svgnames]{xcolor}
\usepackage{calc}

\usepackage{esint}

\usepackage{enumitem}

\usepackage{cases}
\usepackage{eqnarray}

\usepackage[compact]{titlesec}

\usepackage{accents}

\def\XXint#1#2#3{{\setbox0=\hbox{$#1{#2#3}{\int}$ }
\vcenter{\hbox{$#2#3$ }}\kern-.56\wd0}}

\newcommand{\eps}{\varepsilon}

\newcommand{\N}{\mathbb{N}}
\newcommand{\R}{\mathbb{R}}

\newcommand{\cH}{\mathcal{H}}

\newcommand{\Sp}{\mathbb{S}}

\newcommand{\ep}{\varepsilon}

\newcommand{\HH}{\operatorname{H}}

\usepackage{cleveref}

\newtheorem{theorem}{Theorem}[section]
\newtheorem*{remarkx}{Remark}
\newtheorem{lconjecture}{Conjecture} 

\newtheorem{ltheorem}[lconjecture]{Theorem} 
\newtheorem{proposition}[theorem]{Proposition}
\newtheorem{lemma}[theorem]{Lemma}
\newtheorem{corollary}[theorem]{Corollary}

\theoremstyle{definition}

\newtheorem{definition}[theorem]{Definition}

\newtheorem{remark}[theorem]{Remark}

\usepackage{url}

\makeatletter
\renewenvironment{abstract}{
  \par\smallskip
  \noindent\textsc{Abstract.}\ %
}{
  \par\medskip
}
\makeatother

\makeatletter
\newcommand{\addressa}[1]{\gdef\@addressa{#1}}
\newcommand{\emaila}[1]{\gdef\@emaila{\url{#1}}}

\newcommand{\@endstuff}{\par\vspace{\baselineskip}\noindent
\begin{tabular}{@{}l}\scshape\@addressa\\\textit{E-mail address:} \@emaila\end{tabular} 

}

\AtEndDocument{\@endstuff}
\makeatother

\begin{document}

\setlength{\abovedisplayskip}{6pt}          
\setlength{\belowdisplayskip}{6pt}          
\setlength{\abovedisplayshortskip}{6pt}     
\setlength{\belowdisplayshortskip}{6pt}     

\setlength{\parskip}{0pt plus 1pt}
\setlength{\parindent}{15pt}
\setlength{\abovedisplayskip}{5pt plus 1pt minus 2pt}
\setlength{\belowdisplayskip}{5pt plus 1pt minus 2pt}

\setlist[itemize]{itemsep=2pt, topsep=3pt, partopsep=1pt, parsep=1pt}
\setlist[enumerate]{noitemsep, topsep=2pt, partopsep=0pt, parsep=0pt}

\titlespacing{\section}{0pt}{10pt plus 2pt minus 2pt}{6pt plus 1pt minus 1pt}
\titlespacing{\subsection}{0pt}{8pt plus 2pt minus 2pt}{5pt plus 1pt minus 1pt}

\title{\huge Phase transitions with bounded index:\\
Parallels to De Giorgi's conjecture}
\author{Enric Florit-Simon}
\date{}
\maketitle

\addressa{Enric Florit-Simon \\ Department of Mathematics, ETH Z\"{u}rich \\ Rämistrasse 101, 8092 Zürich, Switzerland}
\emaila{enric.florit@math.ethz.ch}
\newcommand{\enric}[1]{{\color{green}{#1}}}

\abstract{A well-known conjecture of De Giorgi---motivated by analogy with the Bernstein problem for minimal surfaces---asserts the rigidity of monotone solutions to the Allen--Cahn equation in $\R^{d+1}$, with $d\leq 7$.

We establish close parallels to De Giorgi's conjecture for general solutions of bounded Morse index, far stronger than the minimal surface analogy would suggest: Namely, any finite index solution to the Allen--Cahn equation with bounded energy density in $\R^4$ is one-dimensional, and---conditionally on the classification of stable solutions---the same holds for all $4\leq n \leq 7$.

As a geometric application, phase transitions with bounded energy and index in closed four-manifolds have smooth transition layers which behave like minimal hypersurfaces.

Consequently, phase transitions exhibit a remarkably rigid behaviour in higher dimensions. This is in stark contrast with the 3D case, in which a wealth of nontrivial entire solutions with finite index (and energy density) is conversely known to exist, by work pioneered by Del Pino--Kowalczyk--Wei. The authors conjectured that any such solution must have parallel ends which are either planar or catenoidal, suggesting it as a parallel to De Giorgi's conjecture in this framework. We confirm this picture under the bounded energy density assumption.
}

\tableofcontents

\section{Introduction}
\subsection{Minimal surfaces and phase transitions}
Minimal surfaces are the critical points of the area functional, and one of the most fundamental models of geometric optimisation.
Accordingly, a central problem in variational analysis and geometry concerns their existence, regularity and behaviour.

A closely related model is the Allen--Cahn (A--C) functional, introduced in the 1970s as a phase-field model for binary alloys \cite{AC72}. Let
\begin{equation}\label{first_time_energy}
    \mathcal{E}^\ep(u, \Omega) = \frac{1}{\sigma_{n-1}}\int_{\Omega} \left( \frac{\ep}{2}|\nabla u|^2 + \frac{1}{\ep} W(u) \right) dx,
\end{equation}
where $u: \Omega \to \mathbb{R}$, $\Omega \subset \mathbb{R}^n$ is open and $\sigma_{n-1}$ is defined in \eqref{eq:sig(n-1
)def}. We work with the explicit double-well potential $W(u) = \frac{1}{4}(1 - u^2)^2$ in this article.

Critical points $u_\eps$ of \( \mathcal{E}^\varepsilon \) solve the Allen--Cahn equation:
\begin{equation}\label{eq:epaceqintro}
    -\ep \Delta u_\ep + \frac{1}{\ep}W'(u_\ep) = 0.
\end{equation}
As a general philosophy, one expects the transition layers $\{|u_\ep|\leq 0.9\}$ to limit to minimal hypersurfaces as $\ep\to 0$, possibly in a generalised sense. This deep connection lies at the heart of a well known conjecture of De Giorgi \cite{DeG78}---a parallel to Bernstein's conjecture on minimal graphs in this setting---and its relevance in the development of the modern theories of the calculus of variations, phase transitions and $\Gamma$-convergence is hard to overstate:
\begin{lconjecture}[De Giorgi]\label{conj:originalDG}
    Let $3\leq n\leq 7$, and let $u:\R^{n+1}\to[-1,1]$ be a solution to the Allen--Cahn equation satisfying $\partial_{x_{n+1}} u>0$. Then, $u$ is one-dimensional.
\end{lconjecture}
The case $n=2$ was obtained in the works \cite{GG98,AC00}.\\

The goal of the present article is to study and clarify part of the conjectural picture---reviewed in \Cref{sec:wwredclas}---regarding the connection with minimal surfaces.

We record our main results in \Cref{sec:mainres}. While we confirm some long expected analogies, we also provide the strongest instances so far of \textit{divergence from minimal surface theory} in this setting, especially in higher dimensions. Based on this new evidence, we propose in \Cref{sec:mainres} a unifying ``finite index De Giorgi conjecture''---a significant strengthening of De Giorgi's original conjecture in dimensions $4$ to $7$ which we believe plausibly true.\\

In order to motivate our results, several prominent developments are worth highlighting.
\begin{itemize}
    \item The theory started from the perspective of \textit{energy minimisers} and \textit{$\Gamma$-convergence}, with work of Modica--Mortola as one of the first examples \cite{MM77}. More recently, and motivated by De Giorgi's influential conjecture \cite{DeG78}, by work of Caffarelli--C\'ordoba and Savin the regularity of energy minimisers has been shown to replicate that of area-minimising hypersurfaces in dimension up to $7$ \cite{CC95, CC06, Sav09}. The negative counterpart to the rigidity of minimisers in dimension $8$ was established by Pacard--Wei and Liu--Wang--Wei \cite{PW13,LWW17b} via the use of {\it gluing constructions}. Related techniques led to a negative result for De Giorgi's conjecture in dimension $9$, by work of Del Pino--Kowalczyk--Wei \cite{dPKW11}, as well to many other examples of global solutions \cite{dPKW13}.
    \item Another highlight of the early theory is Modica's discovery of a monotonicity formula for \textit{general critical points} \cite{Modica85,ModicaMonotonicity}. A deep connection with the theory of generalised minimal surfaces, i.e. stationary integral varifolds, has been subsequently established by Hutchinson, Tonegawa and Wickramasekera \cite{Hut86,HT00, Ton05, TonegawaWickramasekera2012}, exploiting fundamental breakthroughs in the regularity theory for the area \cite{Wic14}.
    \item From the \textit{geometric point of view}, there has been a recent explosion of research based around Yau's conjecture (on the existence of minimal hypersurfaces in closed manifolds), starting from the remarkably simple constructions by Gaspar and Guaraco of min-max solutions---which naturally satisfy \textit{bounds on their energies and Morse indices}---at the Allen--Cahn level \cite{Guaraco2018,GasparGuaraco2018,GGWeyl}. One of the most pressing issues is then a finer understanding of such solutions, with fundamental developments by Wang--Wei and Chodosh--Mantouldis in \cite{WW18, WW19, CM20}, and more recent work by Serra and the author in \cite{FSstable}.
\end{itemize}

The present article is very much motivated by these latter developments, and it uses the full power of the methods in \cite{WW18, WW19, CM20}. An additional common theme in our results (outlined in \Cref{sec:overproof}) is to go beyond the regime of \textit{scaling-invariant estimates}, in order to harness the interplay (and, in some cases, the \textit{mismatch}) between the natural scalings of minimal hypersurfaces and of solutions to the so-called Toda system (which governs the interactions between transition layers). This allows us to break through some fundamental obstructions exclusive to Allen--Cahn theory---some of which were already identified in \cite{FSstable}.

\subsection{The conjectural picture}\label{sec:wwredclas}
We review several influential conjectures and the progress towards them, emphasising throughout the link with minimal surface theory.

We recall that the \textit{Morse index} of a solution $u_\ep$ to \eqref{eq:epaceqintro} in $\Omega$ is
$$
\sup\left\{\dim E : E\subset C_c^1(\Omega)\ \mbox{is a linear subspace, and }
Q(\eta,\eta)<0\ \mbox{for all } \eta\in E\setminus\{0\}\right\},
$$
where
\begin{equation}\label{eq:2vardef}
    Q(\eta,\eta)=\frac{1}{\sigma_{n-1}}\int_{\Omega} \left(\ep|\nabla \eta|^2 + \frac{1}{\ep} W''(u_\ep)\eta^2 \right) dx\,.
\end{equation}

As we will explain now, stable solutions---i.e. those with zero index---are expected to be highly rigid, at least in low dimensions; on the other hand, solutions with infinite index are wildly flexible, as their zero level sets can have compact connected components of any topology \cite{EP16}. One expects the Morse index then to be an appropriate measure of complexity.

Unless otherwise stated, we will always consider a solution $u : \mathbb{R}^n \to [-1, 1]$ to the Allen--Cahn equation with $\varepsilon = 1$ (and denote $\mathcal E$ instead of $\mathcal E^1$ for its energy):
\begin{equation}\label{eq:aceqintro}
    -\Delta u + W'(u) = 0\,.
\end{equation}

A long-standing conjecture (see \cite[Problem 6]{Wang2021} and \cite{dPKW2009survey, dPKW12, ChanWei2018}), in analogy with by now classical results in the theory of minimal hypersurfaces (\cite{CSZ97,LW02}), is:
\begin{lconjecture}[Finite index implies finite ends]\label{conj:indimpends}
    Let $n\geq 3$, and let $u:\R^n\to[-1,1]$ be a finite Morse index solution to the Allen--Cahn equation. Then, $u$ has finitely many ends, in the sense that there is some $R_0>0$ such that $\{u=0\}$ has finitely many connected components in $\R^n\setminus \overline{B_{R_0}}$.
\end{lconjecture}
The case\footnote{In \cite{Wang2021} it is claimed that, \textit{assuming} the rigidity of stable solutions in $\R^3$, the techniques in \cite{WW18,GuiWangWei2020} can be used to prove \Cref{conj:indimpends} (i.e. the finiteness of ends) also for $n=3$. On the other hand, it is also explained in \cite{Wang2021} that these techniques cannot be applied in higher dimensions.} $n=2$ is a theorem instead by now, obtained in the remarkable article \cite{WW18}.
\begin{remarkx}[Yau's conjecture]
    The methods developed in the 2D case have already found further far-reaching applications, illustrating the interest behind the resolution of \Cref{conj:indimpends}. As a highlight, one of the main steps in the proof in \cite{WW18} is a precise description for stable solutions under appropriate sheeting assumptions. Chodosh and Mantoulidis strengthened and extended it to three dimensions in the breakthrough article \cite{CM20}, with which they first solved the multiplicity one and Morse index conjectures of Marques--Neves for Allen--Cahn approximations of minimal surfaces. This yields---in a generic setting---a strong form of Yau's pivotal conjecture on the existence of infinitely many minimal surfaces in closed three-dimensional manifolds \cite{Yau1982Seminar}. See the works \cite{MN16,IMN18,LMN18,Zhou2020,Song2023} and the references therein for an account of the developments which led to the full resolution of Yau's conjecture.
\end{remarkx}
In the minimal hypersurface case, much more is actually known. Denote points in $\R^{n-1}$ by $x'$, so that points in $\R^n$ are of the form $x=(x',x_n)$. Building on \cite{SSY75, SS81} and \cite{Schoen83, Anderson84}, the work of \cite{Tysk89} shows (see additionally \cite{FC85} for the case $n=3$):
\begin{ltheorem}\label{thm:TyskIndex}
    Let $3\leq n \leq 7$. Let $\Sigma\subset\R^n$ be an embedded minimal hypersurface with finite index, satisfying ${\rm Area}(\Sigma\cap B_R)\leq CR^{n-1}$ for some $C$ and all $R>0$. Then, there exist some $R_0>0$ and $N\in\N$, as well as constants $b_i,c_i$ for every $i=1,...,N$, such that: Up to a rotation, we can write
    \begin{equation}
        \Sigma \setminus \left(\overline{B_{R_0}'}\times[-R_0,R_0]\right)=\bigcup_{i=1}^N {\rm graph}\, f_i\,,
    \end{equation}
    where $f_1<...<f_N$.
    Moreover, if $n=3$ we have
    $$f_i=b_i+c_i\log|y|+O(|y|^{-1})\,,$$
    whereas if
    $4\leq n\leq 7$ we have
    $$f_i=b_i+c_i|y|^{3-n}+O(|y|^{2-n})\,.$$
\end{ltheorem}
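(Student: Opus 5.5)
The approach follows the classical Schoen--Anderson--Tysk route: a blow-down analysis at infinity, an improvement to graphical decay, and then an expansion of the graphs using the minimal surface equation. Note that $y$ in the statement denotes the variable $x'\in\R^{n-1}\setminus \overline{B'_{R_0}}$.

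\textbf{Step 1 (curvature decay).} I would first show that finite index together with Euclidean area growth forces curvature decay
$$|A_\Sigma|(x)\,|x|\to 0 \quad\text{as } |x|\to\infty.$$
Finite index means $\Sigma$ is stable outside some compact set $K$. For $x$ far from $K$, the ball $B_{|x|/2}(x)$ avoids $K$, so $\Sigma\cap B_{|x|/2}(x)$ is stable. The Schoen--Simon--Yau estimates (for $n\le 6$) and the Schoen--Simon regularity theory (for $n\le 7$), combined with the area bound, then give $|A|(x)\le C/|x|$. To improve this to $o(1/|x|)$, I would argue by contradiction using blow-downs.

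\textbf{Step 2 (blow-down).} Consider the rescalings $\Sigma_R=R^{-1}\Sigma$. By the monotonicity formula, the density ratio $\Theta(R)$ is nondecreasing and bounded by $C$, so it has a finite limit. Any subsequential varifold limit is therefore a stationary cone $\mathcal C$. It is stable away from the origin and smooth there by Step 1. Since $n\le 7$, Simons' classification of stable cones (as used in Schoen--Simon) shows that the link is a totally geodesic sphere. Hence $\mathcal C$ is a hyperplane with integer multiplicity $N$.

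The main obstacle is \emph{uniqueness of the tangent plane at infinity}: different subsequences might a priori produce different hyperplanes. For hypersurfaces, all limit planes must share a common density, and a connectedness argument applies: the set of limit planes is connected, and the planes are separated by the rotation needed between them. I would actually follow Schoen's and Tysk's approach. On annuli $A_R=B_{2R}\setminus B_R$, the convergence to $N\cdot P_R$ is smooth, so $\Sigma\cap A_R$ is a union of $N$ graphs over $P_R$ with small gradient. Writing the mean curvature equation as a perturbation of the Laplacian, one obtains a decay estimate for the tilt. This is a Simon-type \L{}ojasiewicz or three-annulus argument, which yields that the tilt between $P_R$ and $P_{2R}$ is $\le CR^{-\alpha}$. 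The tilts are then summable, so $P_R$ converges to a fixed plane $P$, and after a rotation we may take $P=\{x_n=0\}$. This gives
$$\Sigma\setminus\left(\overline{B'_{R_0}}\times[-R_0,R_0]\right)=\bigcup_{i=1}^N {\rm graph}\, f_i,\qquad |\nabla f_i|\to 0,\quad |f_i|=o(|y|).$$
Embeddedness lets us order the graphs as $f_1<\dots<f_N$. That the region near the vertical axis is excluded up to a compact set follows from the smooth convergence on all annuli.

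\textbf{Step 3 (asymptotics).} Each $f_i$ solves the minimal surface equation on an exterior domain in $\R^{n-1}$, with sublinear growth and small gradient. Sublinear growth combined with an interior gradient estimate first gives $|\nabla f_i|=O(|y|^{-\beta})$. Next, rewrite the equation as
$$\Delta f_i=\operatorname{div}\big(F(\nabla f_i)\big),\qquad F(p)=O(|p|^3).$$
An iterative bootstrap with exterior Green potentials then shows that $f_i$ is bounded when $n\ge 4$, and has at most logarithmic growth when $n=3$. Finally, expanding in harmonics on the exterior domain of $\R^{n-1}$ yields the stated forms. For $n-1\ge 3$, the leading terms are
$$f_i=b_i+c_i|y|^{3-n}+O(|y|^{2-n}).$$
For $n-1=2$, they are
$$f_i=b_i+c_i\log|y|+O(|y|^{-1}),$$
with the nonlinear remainder absorbed into the error terms because it decays faster. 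I expect Step 2, the uniqueness of the blow-down plane, to be the delicate point; the remaining steps are routine elliptic estimates.
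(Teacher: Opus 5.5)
The paper does not prove this statement; it quotes it from \cite{Tysk89}, which builds on \cite{SS81} and \cite{Schoen83,Anderson84}. That classical route goes through \emph{finite total curvature}. Finite index plus Euclidean area growth is first upgraded to $\int_\Sigma|A|^{n-1}<\infty$. The theory of ends of finite total curvature (extension of the Gauss map to a compactification, regularity at infinity in Schoen's sense) then gives the graphical decomposition and the expansions, and a limit normal at each end comes for free.

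Your route avoids total curvature altogether. You get $|A|\le C/|x|$ from stability outside a compact set, and identify blow-downs as multiplicity-$N$ hyperplanes via Simons. You then prove uniqueness of the plane at infinity by a decay-modulo-tilt argument on dyadic annuli, and finish with exterior potential theory. This is essentially the mechanism the paper builds in \Cref{sec:annimprovhyper} (\Cref{lem:spiralstruc}, \Cref{lincomp}, \Cref{prop:mainiteration}, \Cref{prop:dec}), which it notes applies equally to minimal hypersurfaces. There it yields $|f_i|+|y||\nabla f_i|\le C_\alpha|y|^\alpha$ for every $\alpha>0$, and your Step 3 is the natural upgrade to the sharp expansions.

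The trade-off is as follows. Your argument only uses $|x||\mathrm{II}|\le C$ and a bound on $H$, so it survives errors like $|x|^2|H|\le C_0$. That is exactly why the paper needs it for Allen--Cahn level sets, where no total curvature bound is available. The classical route instead exploits the exact minimal surface structure, and returns the stronger information $\int_\Sigma|A|^{n-1}<\infty$. Calling the annular argument ``Schoen's and Tysk's approach'' is a misattribution.

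Several points need tightening.
\begin{enumerate}
\item The remark that the set of limit planes is connected gives no uniqueness and should be dropped; everything rests on the decay estimate.
\item Before the per-sheet harmonic approximation, you must show that the number of sheets over $P_R$ is independent of $R$. You also need each sheet to continue as a single graph from one dyadic annulus to the next, with no folding between scales. This is the content of \Cref{lem:spiralstruc}.
\item The tilt mode $a\cdot y$ is scale-neutral, so the decay must be formulated modulo rotations and the tilts then summed. \Cref{lincomp} does this with a growth window ($\rho^{\alpha}$ inward, $\rho^{2-\alpha}$ outward). The window rules out constants, the fundamental solution and quadratic modes, leaving only linear functions.
\item In Step 3, mere sublinearity of $f_i$ only gives $|\nabla f_i|=o(1)$. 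For the potential bootstrap, the source $\operatorname{div}F(\nabla f_i)=O(|\nabla f_i|^2|D^2 f_i|)$ must decay fast enough, which needs the quantitative bound $|f_i|=O(|y|^{1-\beta})$. This is precisely what the summable tilts of Step 2 supply.
\end{enumerate}
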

\begin{remarkx}[Area density bounds]
    The area growth condition is often satisfied in common applications. Nevertheless, it was shown to actually be redundant in the ``classical'' case $n=3$ via \cite{DP79,FS80, Pog81} and \cite{Gul86,FC85,Osserman1964}. Much more recently, this has been established---together with the stable Bernstein conjecture, i.e. the classification of entire stable minimal hypersurfaces in $\R^n$ as hyperplanes---also for $n=4,5,6$ in \cite{CL24,CLMS25,Maz24} (see also \cite{CL23,CMR24}).

    It is worth remarking that the classification of stable entire solutions to Allen--Cahn---commonly known as the stable (or strong) De Giorgi conjecture, and which in particular would imply De Giorgi's original conjecture in one extra dimension by \cite{Jerison-Monneau,Sav09}---is instead wide open already for $n=3$. See however \cite{FS20, CFFS25} for a positive result for some closely related variants of the problem.
\end{remarkx}

It is natural to consider whether an appropriate analogue of \Cref{thm:TyskIndex} might hold in the Allen--Cahn case. For instance, we record conjecture (1) in \cite[p. 127]{dPKW13}, suggested by the authors as a parallel to De Giorgi's conjecture in this framework (see also \cite{dPKW2009survey, dPKW12}):
\begin{lconjecture}[Del Pino--Kowalczyk--Wei]\label{conj:dPKW}
    If $u$ [a solution to Allen--Cahn in $\R^3$] has finite Morse index and $\nabla u(x)\neq 0$ outside a bounded set, then, outside a large ball, each level set of $u$ must have a finite number of components, each of them asymptotic either to a plane or to a catenoid. After a rotation of the coordinate system, all these components are graphs of functions of the same two variables.
\end{lconjecture}

\subsection{Main results}\label{sec:mainres}
Our first result is a full parallel to De Giorgi's conjecture in four dimensions, thus breaking the analogy with minimal hypersurfaces.
\begin{theorem}[Classification in $\R^4$]\label{thm:R4indclass}
    Let $u:\R^4\to[-1,1]$ be a finite index solution to the Allen--Cahn equation, with $\mathcal E(u,B_R)\leq CR^3$ for some constant $C$ and all $R>0$. Then $u$ is one-dimensional.\\
     In other words, $u$ is either identically $\pm 1$ or of the form $\tanh\left(\frac{e\cdot x-s_0}{\sqrt{2}}\right)$ for some unit vector $e\in\Sp^{3}$ and $s_0\in\R$.
\end{theorem}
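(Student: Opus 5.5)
We argue by blow-down and contradiction, combining the varifold limit theory of Hutchinson--Tonegawa with the sheet-interaction estimates of Wang--Wei and Chodosh--Mantoulidis. The overall strategy is to show that the energy density ratio $R^{-3}\mathcal E(u,B_R)$ is constant, and then invoke the rigidity case of Modica's monotonicity formula.

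\textbf{Step 1: blow-down to a cone.} Set $u_\ep(x)=u(x/\ep)$ with $\ep=1/R\to 0$. The energy bound $\mathcal E(u,B_R)\le CR^3$ is scale invariant. By Hutchinson--Tonegawa, along a subsequence the energy measures converge to a stationary integral varifold $V$ in $\R^4$. By Modica's monotonicity formula, $V$ is a cone.

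\textbf{Step 2: the cone is a multiplicity-$N$ hyperplane.} Finite index $I$ means $u$ is stable outside some ball $B_{R_0}$. Hence $u_\ep$ is stable in $\R^4\setminus B_{\ep R_0}$, and the limit cone is stable away from the origin. By Tonegawa--Wickramasekera, whose regularity in dimension $4$ needs no codimension-7 exception, the cone is smooth away from $0$, so its link is a smooth stable minimal surface in $\Sp^3$. Stable cones in $\R^4$ are flat by Simons, so $V=N[\{x_4=0\}]$ with $N$ equal to the density. The same holds at every blow-down scale, though possibly with a rotating plane. The next step rules out such rotation.

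\textbf{Step 3: sheeting and Toda estimates on annuli.} On the annuli $A_R=B_{2R}\setminus B_{R}$ with $R\gg R_0$, $u$ is stable, and its level set is close to $N$ parallel sheets. Here we apply the curvature and sheet-separation estimates of Chodosh--Mantoulidis and Wang--Wei for stable solutions in the sheeted regime; they are available in dimension $4$ conditionally only on low-dimensional input, which is what the paper's $\R^4$ result must supply. These give: each sheet is a graph $f_i$ over the plane, the $f_i$ satisfy a Toda system
\[
\Delta f_i \approx c\,\bigl(e^{-\sqrt2(f_{i+1}-f_i)}-e^{-\sqrt2(f_i-f_{i-1})}\bigr),
\]
and the separations satisfy $f_{i+1}-f_i\gtrsim \log R$.

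The key mismatch of scalings enters here. For a minimal surface, each end decays like $|y|^{3-n}=|y|^{-1}$, as in \Cref{thm:TyskIndex}, and catenoidal ends are admissible. For Allen--Cahn, the Toda interaction terms are of size $e^{-\sqrt2\,{\rm dist}}$. Integrating the Toda system over $B'_R$ (a flux/balancing computation), the separations can grow at most logarithmically, while catenoidal behaviour in $\R^4$ would force $O(1)$ bounded separation. That would make the interactions $\gtrsim 1$, contradicting the $R^{-2}$-type curvature decay. We conclude that, after a rotation, the layers are asymptotically flat, and that $N=1$. Indeed, for $N\ge2$, summing the Toda equations of the top and bottom sheets and testing against cutoffs on $B'_R$ gives a net flux that must be nonnegative, but it is forced to be of order $\int_{B'_R} e^{-\sqrt2\,{\rm sep}}>0$. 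This is incompatible with the $|y|^{-1}$-type Jacobi-field decay in dimension $3$ of the base. I expect this step to be the main obstacle: one must push the estimates beyond scale invariance, with error terms smaller than the Toda interaction, to make the flux argument rigorous.

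\textbf{Step 4: conclusion.} With $N=1$, the density at infinity equals $1$, the density of the 1D profile. By monotonicity, the density at every point and scale is at least that of $\tanh$ at any point of $\{u=0\}$, where it equals $1$ in the limit $r\to0$ after normalisation. So $R^{-3}\mathcal E(u,B_R)$, corrected by Modica's discrepancy term, is constant. The equality case of the monotonicity formula then forces $u$ to be one-dimensional. Alternatively, $N=1$ with flat asymptotics gives $\partial_{x_4}u>0$ outside a compact set, which extends inside by a sliding argument; monotone solutions with a single flat sheet are then 1D by Savin's theorem.
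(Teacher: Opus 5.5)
Your outline matches the paper's: blow down to a flat cone, write $\{u=0\}$ as sheets governed by a Toda system, show $N=1$, and conclude from $\mathbf{M}_\infty(u)=1$. The decisive step, excluding $N\ge 2$, is not established, and the flux argument you sketch cannot work.

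First, your Toda system (with $c>0$) has the wrong sign. Attraction between adjacent layers makes the top sheet subharmonic, $\Delta f_N=\kappa_0 e^{-\sqrt2(f_N-f_{N-1})}+\dots$. So for $N=2$ the separation $v=f_2-f_1$ solves $\Delta v=2\kappa_0e^{-\sqrt2 v}$ on $\R^3\setminus \overline{B_{R_0}'}$. This equation has the explicit solution $v=\sqrt2\log|y|+b$ with $e^{-\sqrt2 b}=(\sqrt2\kappa_0)^{-1}$. Equivalently, $f_2=-f_1=\frac{1}{\sqrt2}\log|y|+\frac{b}{2}$ solves the two-layer Toda system exactly.

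This configuration is asymptotically flat and grows sublinearly. It has $e^{-\sqrt2 v}\sim|y|^{-2}$, consistent with the Wang--Wei separation bound and all scale-invariant curvature bounds. Its flux $\int_{\partial B_R'}\partial_r v$ exactly balances $\int_{B_R'\setminus B_{R_0}'}2\kappa_0e^{-\sqrt2 v}$, both being of order $R$. Nothing forces the $|y|^{-1}$ decay you invoke: that is minimal-hypersurface behaviour, whereas Allen--Cahn layers may diverge logarithmically. Hence no argument using only the Toda equation, flux identities and scale-invariant estimates can give $N=1$.

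What rules this configuration out is \emph{stability at the level of the Toda system}. It violates $2\sqrt2\kappa_0\int e^{-\sqrt2 v}\eta^2\le\int|\nabla\eta|^2$, since $2\sqrt2\kappa_0e^{-\sqrt2 v}=2|y|^{-2}$ while the Hardy constant in $\R^3$ is $\frac14$. The paper derives this stability inequality from the stability of $u$ outside a ball. It then runs Farina's argument for $V=e^{-\sqrt2 v}$:
\begin{itemize}
\item first $\int V^{3/2}<\infty$;
\item hence $V=o(|y|^{-2})$ by a Harnack inequality;
\item hence $\bar v\le\delta\log|y|+C_\delta$ for every $\delta>0$, which contradicts $e^{-\sqrt2 v}\le C|y|^{-2}$.
\end{itemize}

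This argument only works if the Toda system and its stability inequality hold with errors of strictly lower order than $|y|^{-2}$. In the scale-invariant regime the Wang--Wei error is $O(|y|^{-2})$, the same size as the interactions. You flag this as the main obstacle but give no mechanism. The paper's mechanism has three parts:
\begin{itemize}
\item Sheeting with $|\mathcal A|\le C/|x|$ away from the origin. In $\R^4$ this rests on the classification of stable solutions with bounded energy density, i.e.\ $(\star)$ for $n=4$; Chodosh--Mantoulidis is three-dimensional and does not supply it.
\item An annular improvement-of-flatness iteration for $\mathrm{H}=O(|x|^{-2})$. This gives uniqueness of the blow-down plane, which your Step 2 defers and Step 3 never actually proves. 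It also gives graphs over a fixed plane with $|f_i|+|x||Df_i|+|x|^2|D^2f_i|\le C|x|^\alpha$.
\item The resulting decay $|\mathrm{II}|\lesssim|x|^{-3/2}$, fed back into the Wang--Wei gluing to improve both errors to $O(|y|^{-2-1/8})$.
\end{itemize}

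Finally, the first route in your Step 4 is false. One has $\mathbf{M}_r(u,x_0)\to 0$ as $r\to 0$, so there is no density $1$ at small scales, and $\mathbf{M}_r$ is never constant for nontrivial $u$. Once $N=1$ is known, $\mathbf{M}_\infty(u)=1$ and Wang's Allard-type theorem gives one-dimensionality, as in the paper.
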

In fact, we obtain a sharp conditional result in dimensions $4\leq n \leq 7$:
\begin{theorem}[Conditional classification]\label{thm:finindclass}
    Fix $n\in\N$ with $4\leq n \leq 7$, and assume that \eqref{eq:stableclass} below is true in $\R^n$. Let $u:\R^n\to[-1,1]$ be a finite index solution to the Allen--Cahn equation, with $\mathcal E(u,B_R)\leq CR^{n-1}$ for some constant $C$ and all $R>0$. Then $u$ is one-dimensional.
    \begin{equation}\label{eq:stableclass}\tag{$\star$}
        \begin{aligned}
        &\text{Let } u:\mathbb{R}^n \to [-1,1] \text{ be a stable solution to the Allen--Cahn equation, with } \mathcal E(u,B_R)\leq CR^{n-1}\\
        & \text{for some constant }C\text{ and all }R>0. \text{ Then } u \text{ is one-dimensional.}
        \end{aligned}
    \end{equation}
\end{theorem}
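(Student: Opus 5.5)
Our plan is to combine a blow-down analysis with the stable classification \eqref{eq:stableclass} and the sheeting/Toda theory of Wang--Wei and Chodosh--Mantoulidis, and then to exploit a mismatch between the decay of minimal ends and the scaling of the Toda interaction.

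\emph{Step 1: stability away from a compact set and blow-down.} Finite index gives $R_0$ such that $u$ is stable in $\R^n\setminus \overline{B_{R_0}}$; this is the usual argument with disjointly supported test functions. Next we blow down: set $u_R(x)=u(Rx)$, which solves \eqref{eq:epaceqintro} with $\ep=1/R$ and satisfies uniform energy bounds $\mathcal E^{1/R}(u_R,B_1)\le C$. By Hutchinson--Tonegawa, along a subsequence the energy measures converge to a stationary integral varifold $V$, which is a cone by Modica's monotonicity formula. Since $u_R$ is stable away from the origin, Tonegawa--Wickramasekera gives that $V$ is a stable minimal cone, smooth away from $0$ when $n\le7$.

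\emph{Step 2: flatness and sheeting near infinity.} For $4\le n\le 7$ we will show $V$ is a multiplicity-$m$ hyperplane. Stable minimal hypercones regular away from the origin are flat in these dimensions by Simons. Sharpening this to control of $u$ itself: at points $x$ with $|x|\gg R_0$, the rescalings of $u$ on $B_{|x|/2}(x)$ are stable and have bounded energy density. The hypothesis \eqref{eq:stableclass}, combined with a compactness argument in the spirit of \cite{FSstable}, then yields curvature estimates $|A|\lesssim 1$ for the level sets in $\{|x|>2R_0\}$, and in fact $|A|(x)=o(1)$. Consequently, outside a cylinder, $\{u=0\}$ is a union of $m$ ordered graphs $f_1<\dots<f_m$ over $\R^{n-1}$. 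Moreover $u$ is close to the one-dimensional profile in the Fermi coordinates of each sheet, as in the Wang--Wei/Chodosh--Mantoulidis sheeting theorem. For $n=4$ no hypothesis is needed, because the stable classification in $\R^4$ required here should follow from the three-dimensional results, via \cite{CM20} together with a dimension reduction. Presumably this is exactly why \Cref{thm:R4indclass} is unconditional.

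\emph{Step 3: Toda system and ruling out interaction.} The sheets satisfy, up to errors, the Jacobi–Toda system
\[
H_{f_i} \approx c\left(e^{-\sqrt2 (f_{i+1}-f_i)}-e^{-\sqrt2(f_i-f_{i-1})}\right).
\]
This is the step we expect to be the main obstacle. Minimal-surface theory (\Cref{thm:TyskIndex}) predicts $f_i=b_i+c_i|y|^{3-n}+\dots$, so the gaps converge to constants $b_{i+1}-b_i$. If $m\ge2$, bounded gaps force a non-decaying Toda forcing term $\sim e^{-\sqrt2\,\mathrm{gap}}$, which is incompatible with the gaps tending to constants with decaying corrections. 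The gaps must therefore diverge, yet a logarithmic-type divergence is impossible for solutions that are asymptotically flat at rate $|y|^{3-n}$ in $n\ge4$. To make this rigorous, we would integrate the Toda equation for the outermost sheet against a cutoff $\eta(y/R)$. The left side (the flux of $H$) is bounded by $o(R^{n-2})$, by the $C^{1}$ decay obtained via an iteration improving $|\nabla f_i|$ from $o(1)$ to $O(|y|^{2-n})$. The right side is bounded below by a positive integral of $e^{-\sqrt2 (f_2-f_1)}$. A contradiction argument then shows the gaps grow like at least $\frac{n-1}{\sqrt2}\log|y|$, contradicting the bounded oscillation coming from flatness. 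We conclude $m=1$.

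\emph{Step 4: conclusion.} With $m=1$, the blow-down is a multiplicity-one hyperplane, so the density at infinity equals $1$. Modica's monotonicity formula then forces the density to be $1$ at every scale, i.e. equality in the monotonicity formula. Hence $u$ is one-dimensional, or $u\equiv\pm1$ in the degenerate case $m=0$.
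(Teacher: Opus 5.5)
The genuine gap is in Step 3. You import the minimal-surface asymptotics $f_i=b_i+c_i|y|^{3-n}$ and the decay $|\nabla f_i|=O(|y|^{2-n})$, but the sheets are not minimal. They satisfy the Toda system, whose right-hand side is a priori of the critical size $|y|^{-2}$, and from $|x|^2|H|\le C$ the most one can extract is $|f_i|+|y||\nabla f_i|\lesssim_\alpha |y|^{\alpha}$ for every $\alpha>0$. So your argument is circular, and there is no ``bounded oscillation from flatness''. Gaps growing like $\log|y|$ are compatible with everything you use.

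Concretely, on $\R^{n-1}\setminus\overline{B_1}$ take $f_2=-f_1=\frac{1}{\sqrt2}\log|y|+\frac c2$ with $\kappa_0e^{-\sqrt2 c}=\frac{n-3}{\sqrt2}$. This pair solves the two-layer Toda system exactly, grows sublinearly, has $|D^2f_i|=O(|y|^{-2})$, and has flux through $\partial B_R$ of order $R^{n-3}=o(R^{n-2})$. No argument based only on the equation, growth and flux can exclude it. What does exclude it is \emph{stability}: here $2\sqrt2\kappa_0e^{-\sqrt2(f_2-f_1)}=2(n-3)|y|^{-2}$, which exceeds the Hardy constant $\frac{(n-3)^2}{4}$ exactly when $4\le n\le 10$.

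Your proposal never uses this. The paper derives a Toda stability inequality with constant $1+O(R_0^{-1/8})$. It then runs Farina's estimate for $V=e^{-\sqrt2(f_i-f_{i-1})}$:
\begin{itemize}
\item multiply $-\Delta V\le 2\sqrt2\kappa_0V^2-V^{-1}|\nabla V|^2+\dots$ by $V^{2q-1}\eta^2$;
\item test stability with $V^q\eta$;
\item choose $2q+1=\frac{n-1}{2}$, which requires $n\ge 4$.
\end{itemize}
This gives $\int V^{(n-1)/2}<\infty$, then $V=o(|y|^{-2})$ by a Harnack argument, and finally a contradiction from radial averages of $\Delta v_i\le o(|y|^{-2})$.

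A Toda system with a genuinely lower-order error also needs ingredients missing from your Step 2. Simons gives flatness of each blow-down. But to write $\{u=0\}$ as graphs over a single hyperplane you must prove the blow-down is unique, since under $|\mathrm{II}|\lesssim|x|^{-1}$ the normals $e_R$ can drift across dyadic scales. The paper proves uniqueness with an improvement-of-flatness iteration in annuli for $|x|^2|H|\le C$, which also yields $|f_i|\lesssim|y|^\alpha$.

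That sublinear growth is what upgrades the curvature to $|\mathrm{II}|\lesssim|x|^{-3/2}$, beating scaling. With only $|A|=o(1)$, or even $|A|\lesssim|x|^{-1}$, the Wang--Wei gluing yields the Toda system with an $O(|y|^{-2})$ error. That is the same order as the interaction, so no information survives. With the improved curvature, the errors in both the equation and its stability inequality become $O(|y|^{-2-1/8})$.

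Finally, Step 4 is incorrect as stated. Since $\mathbf{M}_r(u)\le Cr\to 0$ as $r\to 0$, monotonicity cannot force density $1$ at every scale, and there is no equality case to invoke. From $\mathbf{M}_\infty(u)=1$ one concludes instead by Wang's Allard-type theorem.
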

The classification result in \eqref{eq:stableclass} holds for $n\leq 3$ by \cite{AAC01,AC00}. The recent article \cite{FSstable} by Serra and the author confirms \eqref{eq:stableclass} also for $n=4$, thus showing \Cref{thm:R4indclass} from \Cref{thm:finindclass}.\\

Based on this evidence, we believe it is not far-fetched to propose:
\begin{lconjecture}[Finite index De Giorgi conjecture]\label{conj:indclasnogrowth}
    Let $4\leq n\leq 7$, and let $u:\R^n\to[-1,1]$ be a finite Morse index solution to the Allen--Cahn equation. Then, $u$ is one-dimensional.
\end{lconjecture}
To our knowledge, this conjecture has never been formulated in the literature, unlike more modest variants such as \Cref{conj:indimpends}. We may highlight, nevertheless, two additional existing results which may be seen as compelling evidence towards it, and which are especially relevant to the present paper:
\begin{itemize}
    \item In dimension $3$, many finite index examples of minimal surfaces (and of associated Allen--Cahn solutions \cite{dPKW13, AdPW15, GuiLiuWei}) are known by now. On the other hand, in dimensions $n\geq 4$ there are remarkably few examples of minimal hypersurfaces already; among those with finite index, we highlight the higher-dimensional catenoid (which is axially symmetric) and the Fakhi--Pacard examples \cite{FP00} obtained via gluing methods.
    
    In the phase transition setting, the article \cite{AdPW16} constructs axially symmetric solutions to Allen--Cahn whose zero level set is a logarithmic perturbation of the higher-dimensional catenoid. However, these solutions are shown to have infinite index instead in dimensions $4\leq n\leq 10$.
    
    \item In \cite{GuiWangWei2020}, the authors study  general axially symmetric solutions to the Allen--Cahn equation in $\R^n$, with $3\leq n \leq 10$. Natural examples include then the catenoidal solutions constructed in the articles mentioned above---we recall that, in the case of minimal hypersurfaces, the only complete, connected, embedded minimal hypersurface with axial symmetry is precisely the (generalised) catenoid.
    
    The results in \cite{GuiWangWei2020} confirm that the behaviour found in all such examples extends---in the finite index case---to the class of axially symmetric solutions: Indeed, in the 3D case, the authors show that such solutions only have finitely many ends, and in dimensions $4\leq n\leq 10$ they show that any such solution must be one-dimensional. This confirms, in particular, Conjectures \ref{conj:indimpends} and \ref{conj:indclasnogrowth} up to dimension $10$ in the axially symmetric case. Let us emphasise that, at least for \Cref{conj:indclasnogrowth}, the restriction $n\leq 7$ is necessary in general: for instance, there are energy-minimising (thus, in particular, finite index) solutions to Allen--Cahn in $\R^8$ whose transition level sets are asymptotic to a leaf of the Bombieri--De Giorgi--Giusti foliation of the Simons cone \cite{Simons68, BDGG69}, see \cite{PW13, LWW17b}.
\end{itemize}
{\bf A geometric application}

We record a direct application of \Cref{thm:R4indclass}, and which would seem hopeless in the three-dimensional case. Morally, it says that the transition layers of solutions with bounded energy and index\footnote{As for instance those constructed in \cite{Guaraco2018,GasparGuaraco2018}.} in four dimensions \textit{invariably behave like smooth minimal hypersurfaces}.
\begin{corollary}[Behaviour in closed 4-manifolds]\label{thm:4closedbeh}
    Let $M^4$ be a closed\footnote{Meaning compact and without boundary.}, four-dimensional Riemmanian manifold. Let $u_{\ep}$ be a solution to the $\ep$-Allen--Cahn equation on $M$, with 
    $$\mathcal E^{\ep}(u_{\ep},M):= \frac{1}{\sigma_{n-1}}\int_{M} \left( \frac{\ep}{2}|\nabla u_\ep|^2 + \frac{1}{\ep} W(u_\ep) \right) \leq E_0$$
    and Morse index bounded by $I_0$.
    
    Then, there exists $\ep_0=\ep_0(M,E_0,I_0)>0$ such that: If $\ep\leq \ep_0$, then
    $$\{u_{\ep}=t\}\ \ \mbox{is a smooth hypersurface for every}\ \ |t|\leq 0.9\,.$$
    In fact, there is some $c=c(M,E_0,I_0)>0$ such that $|\nabla u_{\ep}|\geq \frac{c}{\ep}$ in $\{|u_{\ep}|\leq 0.9\}$.

    Additionally, given $\Lambda\geq 1$, up to making $\ep_0$ smaller---depending now also on $\Lambda$---the following hold:
    \begin{itemize}
        \item {\bf Slow curvature degeneration:} We have $\mathcal A_{u_{\ep}}\leq \frac{1}{\Lambda\ep}$ in $\{|u_{\ep}|\leq 0.9\}$.
        \item {\bf Sheet separation:} For every $p\in \{|u_{\ep}|\leq 0.9\}$, letting $t=u_{\ep}(p)$ the set $\{u_{\ep}=t\}\cap B_{\Lambda\ep}(p)$ consists of a single connected graph in normal coordinates.
    \end{itemize}
\end{corollary}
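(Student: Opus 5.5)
The proof is a compactness and contradiction argument built on \Cref{thm:R4indclass}. Every assertion in \Cref{thm:4closedbeh} concerns a fixed number of rescaled length scales around a point of the transition layer. So I would assume the statement fails along a sequence $\ep_k\to 0$, choose bad points $p_k\in\{|u_{\ep_k}|\leq 0.9\}$, and blow up at scale $\ep_k$.

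\textbf{Blow-up.} In normal coordinates at $p_k$, set
$$v_k(x)=u_{\ep_k}(\exp_{p_k}(\ep_k x)).$$
These solve an Allen--Cahn equation for the rescaled metrics $g_k$. The metrics $g_k$ converge smoothly on compact sets to the Euclidean metric on $\R^4$.

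The $v_k$ take values in $[-1,1]$, after the standard maximum principle reduction: since $M$ is closed, any solution satisfies $|u_\ep|\leq 1$. Elliptic estimates then give uniform $C^{2,\alpha}_{loc}$ bounds. Hence a subsequence converges in $C^2_{loc}$ to a solution $v:\R^4\to[-1,1]$ with $|v(0)|\leq 0.9$.

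\textbf{Hypotheses of \Cref{thm:R4indclass} for $v$.}
\begin{itemize}
\item \emph{Energy growth.} Modica's monotonicity formula on manifolds is an almost-monotonicity with error $e^{C r}$ at small scales $r\le r_0(M)$. It gives $\mathcal E^{\ep}(u_\ep,B_r(p))\leq C(M)E_0\, r^{3}$ for $r\leq r_0$. After rescaling, $\mathcal E(v_k,B_R)\leq C E_0 R^3$ for all $R\leq r_0/\ep_k$, and this passes to the limit.
\item \emph{Index.} The index of $v$ is at most $I_0$. If $v$ had $I_0+1$ independent compactly supported negative directions, they would span a negative definite subspace of the quadratic form. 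By $C^2_{loc}$ convergence, transplanting them gives an $(I_0+1)$-dimensional negative definite subspace for $u_{\ep_k}$, contradicting the index bound.
\end{itemize}
\Cref{thm:R4indclass} then applies. Since $|v(0)|\leq 0.9$, $v$ is not constant, so $v=\tanh((e\cdot x-s_0)/\sqrt2)$.

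\textbf{Contradiction in each case.} The one-dimensional profile has $|\nabla v|\geq c_0>0$ on $\{|v|\leq 0.9\}\cap B_{2\Lambda}$. Its level sets are flat hyperplanes, so the enhanced second fundamental form $\mathcal A_v$ vanishes identically. Each level set meets $B_{2\Lambda}$ in a single flat sheet. By $C^2$ convergence on $B_{2\Lambda}$:
\begin{itemize}
\item $|\nabla v_k|\geq c_0/2$ on $\{|v_k|\leq 0.9\}\cap B_{\Lambda}$, which gives the gradient bound $|\nabla u_\ep|\geq c/\ep$. By the implicit function theorem this makes all level sets $\{u_\ep=t\}$ with $|t|\leq 0.9$ smooth near $p_k$.
\item $\mathcal A_{v_k}\to 0$ uniformly near $0$. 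This uses that $\mathcal A$ depends continuously on $D^2v$ and $\nabla v$ wherever $\nabla v\neq0$. It gives the slow curvature degeneration $\mathcal A_{u_\ep}\le 1/(\Lambda\ep)$ for large $k$.
\item The level sets of $v_k$ in $B_\Lambda$ are $C^1$-close to a single hyperplane, hence form a single connected graph. This gives sheet separation.
\end{itemize}
Each conclusion contradicts the choice of bad points $p_k$. The constants obtained depend only on $M,E_0,I_0$, and on $\Lambda$ for the last two items.

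\textbf{Main obstacle.} The delicate step is ensuring the limit has the \emph{global} growth $\mathcal E(v,B_R)\leq CR^3$ for all $R$, uniformly in $k$. This requires the monotonicity formula on a manifold with constants depending only on the geometry of $M$ (injectivity radius, curvature bounds) and on $E_0$. It also requires that the discrepancy term, which may be positive on manifolds, is controlled. I would invoke the Hutchinson--Tonegawa bounds, valid in this bounded-energy setting, for this.

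The index passage is standard, but one must check that the transplanted test functions stay supported in a single chart where the metric converges.
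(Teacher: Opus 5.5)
Your proposal is correct and follows the paper's argument. Both blow up at scale $\ep$ around hypothetical bad points and use the Riemannian almost-monotonicity formula to get a density bound uniform over all scales; the paper cites Guaraco's Appendix B for this, where you invoke Hutchinson--Tonegawa-type discrepancy control. Both then pass the index bound to the entire limit, classify it via \Cref{thm:R4indclass}, and contradict each of the three conclusions using that the limit is a one-dimensional profile with flat level sets and nonvanishing gradient on $\{|v|\leq 0.9\}$.
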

The first bullet above shows that the only possible model of curvature concentration (``bubble''), for a sequence of $u_\ep$ as in the corollary with $\ep\to 0$, is a minimal hypersurface of bounded index in $\R^4$.
It would be interesting to investigate whether further geometric consequences can be derived from this; for instance, a natural question is whether the topology of the level sets can be controlled in terms of the energy and index of the solutions, mirroring \cite{CKM17} or \cite{BS18}.\\

{\bf The three-dimensional case}

As discussed above, the higher-dimensional behaviour from the previous results represents not only a drastic deviation from minimal hypersurface theory, but also from \textit{three-dimensional Allen--Cahn theory itself}---in which a wealth of solutions with finite index and density have been constructed via gluing methods.

In fact, given any complete, nondegenerate, embedded minimal surface in $\R^3$ with finite index, there exists an entire Allen--Cahn solution with the same index and whose zero level set is a logarithmic perturbation of it, by \cite{dPKW13}. In particular, all such examples have finitely many parallel ends, which are asymptotic either to a plane or to a catenoid by construction, leading the authors to put forward \Cref{conj:dPKW} (see also the surveys \cite{dPKW2009survey,dPKW12}).

Our final result confirms this conjecture for solutions with quadratic energy growth, validating the analogy with minimal surfaces and showing a much closer analogue of \Cref{thm:TyskIndex} for $n=3$.
\begin{theorem}[Structure in $\R^3$]\label{thm:R3indclass}
    Let $u:\R^3\to[-1,1]$ be a finite index solution to the Allen--Cahn equation, with $\mathcal E(u,B_R)\leq CR^2$ for some constant $C$ and all $R>0$. Then $u$ has finitely many parallel ends, which are either planar or catenoidal. More precisely, there exist some $R_0>0$ and $N\in\N$, as well as constants $b_i,c_i$ for every $i=1,...,N$, such that: Up to a rotation, we can write
    \begin{equation}
        \{u=0\} \setminus \left(\overline{B_{R_0}'}\times[-R_0,R_0]\right)=\bigcup_{i=1}^N {\rm graph}\, f_i\,,
    \end{equation}
    where $f_1<...<f_N$, 
    $$f_i=b_i+c_i\log |x'|+O(|x'|^{-\alpha})\quad\mbox{for some}\quad \alpha>0\,,$$
    and $|c_{i+1}-c_i|> \sqrt{2}$ for every $i=1,...,N-1$.
\end{theorem}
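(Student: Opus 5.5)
We prove \Cref{thm:R3indclass} by a blow-down argument, then improve it to a fine asymptotic description of the ends.

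\textbf{Step 1: blow-down and sheeting.} Consider the rescalings $u_R(x)=u(Rx)$, which solve the $\ep$-Allen--Cahn equation with $\ep=1/R$. They have uniformly bounded energy density (quadratic growth) and index at most $I$. By Hutchinson--Tonegawa and Wickramasekera--Tonegawa, the energy measures converge subsequentially to a stationary integral varifold $V$. Because the index is finite, $V$ is stable away from at most $I$ points; in particular it is stable in $\R^3\setminus\{0\}$ for the blow-down. Each blow-down is a cone by Modica monotonicity. Combining stability with the regularity of stable limits in 3D (Chodosh--Mantoulidis), $V$ is a stable minimal cone smooth away from the origin, hence a multiplicity-$N$ plane. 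The curvature estimates of \cite{CM20} for stable solutions, applied on annuli $B_{2R}\setminus B_{R/2}$ for $R$ large (where $u$ is stable), give sheeting. Outside a large cylinder, $\{u=0\}$ is a union of $N$ graphs $f_1<\dots<f_N$ over the plane $\{x_3=0\}$, with $|\nabla f_i|\to0$ and $|x'||\nabla^2 f_i|\to0$. Uniqueness of the blow-down plane, i.e.\ that the normal does not rotate between scales, follows from a decay estimate iterated dyadically. This estimate is a consequence of Step 2.

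\textbf{Step 2: the Toda system.} On these ends, the Wang--Wei / Chodosh--Mantoulidis analysis yields the approximate Toda system
\[
\Delta f_i = c\left(e^{-\sqrt2 (f_{i+1}-f_i)}-e^{-\sqrt2(f_i-f_{i-1})}\right)+\text{error},
\]
with errors controlled by curvature squared and by higher-order interaction terms. The key step is to go beyond scaling-invariant estimates. We use the mismatch between the two scalings: the minimal surface (Jacobi) scaling in 2D is logarithmic, whereas the Toda interactions decay exponentially in the separation. Let $d_i=f_{i+1}-f_i$. Integrating the Toda system over annuli, and using finite index via the stability inequality on the exterior, we obtain lower bounds of the form $d_i\ge \frac{2}{\sqrt2}\log|x'|-C$. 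We do this by testing the stability inequality with logarithmic cutoffs, a form of Wang--Wei's argument in 2D. These bounds make the right-hand side $O(|x'|^{-2-\delta})$, which is integrable in $\R^2$ at infinity.

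\textbf{Step 3: asymptotics.} With an integrable right-hand side, each $f_i$ solves $\Delta f_i=g_i$ on an exterior domain, with $|g_i|\lesssim |x'|^{-2-\delta}$ and $f_i$ of sublinear growth. Standard exterior-domain potential theory in $\R^2$ then gives
\[
f_i=b_i+c_i\log|x'|+O(|x'|^{-\alpha}),
\]
where $c_i=\frac1{2\pi}\int g_i$ up to boundary flux. The separation condition follows directly: if $c_{i+1}-c_i\le\sqrt2$, then $e^{-\sqrt2 d_i}\gtrsim |x'|^{-2}$. This is non-integrable and would force a logarithmic divergence in the flux, contradicting the expansion. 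Strictness in the borderline case needs the $\log\log$ correction argument, so $|c_{i+1}-c_i|>\sqrt2$. The same decay yields parallelism of the ends and uniqueness of the tangent plane.

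\textbf{Main obstacle.} We expect the hardest step to be Step 2: deriving, from finite index and quadratic energy growth alone, the sharp logarithmic separation lower bound that makes the Toda interaction integrable. Scale-invariant estimates only give $d_i\gg1$. We would first try a contradiction argument. If the separation were smaller at a sequence of scales, rescaling around those points would produce an entire stable solution of the Toda system in $\R^2$ with too slow a separation. That contradicts the 2D classification of stable Toda solutions together with Wang--Wei's estimates.
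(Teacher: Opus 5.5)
There is a genuine gap, and it sits at the heart of the theorem. In Step 2 you assert that the interactions are $O(|x'|^{-2-\delta})$ pointwise. But the bound you actually derive, $d_i\ge\sqrt2\log|x'|-C$ (note $\frac{2}{\sqrt2}=\sqrt2$), only gives $e^{-\sqrt2 d_i}\le C|x'|^{-2}$, which is Wang--Wei's scale-invariant separation estimate. Testing the Toda stability inequality with linear or logarithmic cutoffs on an exterior domain yields only integrability, $\int_{\R^2\setminus B'_{2R_0}}e^{-\sqrt2(f_{i+1}-f_i)}<\infty$, not an algebraic gain. Your blow-up idea could at best give $o(|x'|^{-2})$. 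A pointwise bound $O(|x'|^{-2-\delta})$ is essentially the strict inequality $c_{i+1}-c_i>\sqrt2$ itself, so Steps 2--3 are circular.

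The same issue reappears in Step 3. When $c_{i+1}-c_i=\sqrt2$ one has $d_i=\sqrt2\log|x'|+E$ with $E$ possibly tending to $+\infty$ (e.g.\ like $\tfrac{1+\delta}{\sqrt2}\log\log|x'|$). Then $e^{-\sqrt2 d_i}$ is integrable and ``$\gtrsim|x'|^{-2}$'' fails; the ``$\log\log$ correction argument'' you defer to is precisely the missing idea.

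Independently, you never make the Toda \emph{error} integrable. With scale-invariant curvature bounds the Wang--Wei error is $O(|x'|^{-2})$, and even with your $|x'||D^2f_i|\to0$ a curvature-squared error is only $o(|x'|^{-2})$. Neither need lie in $L^1$ near infinity in $\R^2$, so even $\Delta f_i\in L^1$ is out of reach. Your Step 3 potential theory is fine once $\Delta f_i\in L^1$ is known; a Laurent expansion could even replace the paper's logarithmic improvement of flatness for the harmonic part.

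The paper resolves these issues in the opposite order.
\begin{itemize}
\item First, with no Toda analysis, an improvement-of-flatness iteration on dyadic annuli, driven only by $|x||\mathrm{II}|+|x|^2|\mathrm{H}|\le C$, gives a unique tangent plane. It also gives $|f_i|+|x'||\nabla f_i|+|x'|^2|D^2f_i|\le C_\alpha|x'|^{\alpha}$ for every $\alpha>0$. So uniqueness of the blow-down does not come from Step 2; your dependence is inverted.
\item This breaks scale invariance ($|\mathrm{II}|\lesssim|x'|^{-3/2}$). Redoing the Wang--Wei gluing with this input gives the Toda system and its stability inequality with error $O(|x'|^{-2-1/8})$.
\item A linear cutoff then gives integrable interactions, hence $\Delta f_i\in L^1\cap L^\infty$. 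Writing $f_i=v_i^a+v_i^b$, with $v_i^a$ the renormalised logarithmic potential and $v_i^b$ harmonic of sublinear growth, yields $f_i=(c_i+o(1))\log|y|$ and $c_{i+1}-c_i\ge\sqrt2$.
\item Strictness is the key new step. Take a maximal block $i_1<\dots<i_2$ of critical gaps. Its outer neighbours are strictly separated, so $f_{i_2}$ is subharmonic and $f_{i_1}$ superharmonic up to $O(|y|^{-2-\alpha})$. Lin's inequality $\log\frac{|y-z|}{|y||z|}\le0$ then gives $f_{i_2}-c_{i_2}\log|y|\le C$ and $f_{i_1}-c_{i_1}\log|y|\ge -C$. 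On the other hand, each critical interaction is integrable against the non-integrable measure $|y|^{-2}dy$. This forces the gaps $f_{i+1}-f_i-\sqrt2\log|y|$, $i_1\le i<i_2$, to exceed any $M$ simultaneously at some point, which contradicts the bound on their sum.
\end{itemize}
Only after this does one obtain $e^{-\sqrt2(f_{i+1}-f_i)}=O(|y|^{-2-\alpha})$ and the expansion $f_i=b_i+c_i\log|x'|+O(|x'|^{-\alpha})$.
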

A well-known related question---see \cite{GuiLiuWei,dPKW12} and conjecture (2) in \cite[p. 127]{dPKW13}---and which we do not address here, is to show that solutions in $\R^3$ with two ``regular'' ends (or with index one) are axially symmetric, by analogy with \cite{Schoen83}. We hope that the results in our article might provide a starting point for its study.

\subsection{Overview of the proofs}\label{sec:overproof}
We focus on explaining why we can conclude Theorems \ref{thm:finindclass} and \ref{thm:R3indclass} despite the {\it optimality} of \Cref{thm:sheetimpc2alpha} as stated.

Consider a stable solution to Allen--Cahn in $B_R\subset\R^n$ with bounded energy density, and assume that its zero level set decomposes into several almost-parallel layers $\Gamma_i$, with
\begin{equation}\label{eq:2738ytgbog}
    |\mathrm{II}_{\Gamma_i}|=O(\frac{1}{R})\quad\mbox{in}\quad B_{R/2}\,;
\end{equation}
this is known to always hold if $n\leq 7$ and \eqref{eq:stableclass} is true in $\R^n$.

An important part of the work in \cite{WW19} consists in showing that, letting $H_i$ denote the mean curvature of $\Gamma_i$, and letting $d_{\Gamma_i,\Gamma_{i+1}}$ denote an appropriate (signed) distance function, one has the Toda-type system
\begin{equation}\label{eq:78gtoqrbg}
    H_i=\kappa_0 \left(e^{-\sqrt{2}d_{\Gamma_i,\Gamma_{i-1}}} - e^{-\sqrt{2}d_{\Gamma_i,\Gamma_{i+1}}}\right)+Err,\quad\mbox{where}\quad Err=O(\frac{1}{R^2})\,.
\end{equation}
This additive error is \textit{optimal} in general. Via sheet separation estimates, the final conclusion in \cite{WW19} is that $e^{-\sqrt{2}d_{\Gamma_i,\Gamma_{i-1}}}+e^{-\sqrt{2}d_{\Gamma_i,\Gamma_{i+1}}}=O(\frac{1}{R^2})$ as well.

Hence, at this point, working with the equation 
\begin{equation}\label{eq:78gtoqrbg2}
    H=O(\frac{1}{R^2})
\end{equation}
instead is just {\it equivalent} to \eqref{eq:78gtoqrbg}, and {\it optimal} as stated. In particular, the interaction information between different components---which we were reading off \eqref{eq:78gtoqrbg}---is lost beyond this point.\\

A main realisation from this is that arguing by using the estimates provided by stability, such as \eqref{eq:78gtoqrbg2}, merely on balls centered away from the origin---as is customary in these kinds of problems---{\bf cannot} possibly allow us then to conclude\footnote{For instance, there are examples of nontrivial minimal hypersurfaces of finite index in $\R^4$, such as the higher-dimensional catenoid.} the surprising rigidity result for finite index solutions in dimensions $4\leq n \leq 7$ (nor the precise asymptotics in \Cref{thm:R3indclass})! With this in mind, here is a brief overview of how we will nevertheless obtain our results:
\begin{itemize}
    \item Let $u:\R^n\to[-1,1]$, $n\leq 7$, be an entire solution with finite index, with $\mathcal E(u,B_R)\leq C R^{n-1}$ for all $R>0$, and assume that \eqref{eq:stableclass} holds. The finite index condition is equivalent to stability far away from the origin, which then leads to \eqref{eq:78gtoqrbg2} for the zero level set of $u$ in balls away from the origin.
    \item Simons' classification \cite{Simons68}---applied to blow-down rescalings of $\{u=0\}$, which are stable minimal cones---yields then flatness on large scales for $\{u=0\}$.
    \item Via an improvement of flatness iteration in annuli (for the PDE \eqref{eq:78gtoqrbg2}), we establish then the uniqueness of such blow-downs, and a decomposition of $\{u=0\}$ (away from a large ball) as a union of graphs of functions $f_1<...<f_N$ over a single hyperplane.
    
    This simple---yet powerful---idea can be applied to a wide range of problems, but we have not found it elsewhere in the literature: indeed, the author first heard about it from X. Fern\'andez-Real and J. Serra in relation to \cite{CFFS25}. For this reason, a general version of this technique will be given in \cite{FFS25A}; still, for the purpose of the present article, we develop this result (with a related, but different, iteration-type proof) for the PDE \eqref{eq:78gtoqrbg2} in the self-contained \Cref{sec:annimprovhyper}.
    \item This iteration gives moreover strong sublinear bounds on the growth of the $f_i$; namely, that the $f_i$ have growth $O(|x|^{\delta})$ for any $\delta>0$. This is essentially optimal just from \eqref{eq:78gtoqrbg2}, yet still far from the desired behaviour. 
    \item However, we can bootstrap this---via the $C^{\alpha}$ mean curvature estimates in \cite{WW19}---to $|\mathrm{II}_{\{u=0\}}|\lesssim|x|^{\delta-2}$, which {\it improves} the curvature estimates from \eqref{eq:2738ytgbog} by almost a full power. Remarkably, this estimate escapes the ``scaling-invariant'' regime.
    \end{itemize}
    All of the above can, however, also be obtained in the minimal hypersurface setting; in fact, with much stronger consequences, since we have been forced to argue with the restrictive condition \eqref{eq:78gtoqrbg2} for now. This is therefore just a starting point, in preparation for the use of \textit{pure Allen--Cahn theory}:
    \begin{itemize}
    \item Revisiting carefully the gluing arguments\footnote{Fortunately for us, a fair number of the modifications required in \cite{WW19,WW18} have already been outlined---in the axially symmetric case---in \cite{GuiWangWei2020}.} in \cite{WW19,WW18}, the improved curvature estimates on $\{u=0\}$ show that $u$ can be better approximated by an ``Ansatz solution''---built as a superposition of 1D Allen--Cahn solutions---than what is optimal in a general, ``scaling-invariant'' situation.
    
    \item The full power of the argumentation in \cite{WW19} leads then in this setting to 
    $$
\Delta f_i\simeq H_i=\kappa_0 \left(e^{-\sqrt{2}(f_i - f_{i-1})} - e^{-\sqrt{2}(f_{i+1} - f_i)}\right)+Err'\,,
$$
where $Err'=O(\frac{1}{|x'|^{2+1/8}})$ is now genuinely of lower order---and not only that, but we obtain moreover an associated {\it stability condition}, also up to a lower order error, for this PDE system. In other words, the \textbf{interaction} between layers becomes leading again. This is the main result in \Cref{sec:ACstruct}.
\end{itemize}
After reaching this point, our goal is now to beat the natural scaling $\Delta f_i\simeq \frac{1}{|x'|^2}$ for solutions of the system above. We argue differently depending on the value of $n$. 
\begin{itemize}
\item In the case $4\leq n\leq 7$, we argue as follows:
\begin{itemize}
    \item It is known that the Liouville equation $\Delta v = e^{-v}$
does not admit solutions in dimensions $3$ to $9$ which are stable outside of a ball \cite{DF09}. Thanks to the graphicality and the improved errors (which are ``lower order''), letting $v=f_{i+1}-f_i$ instead the same proof of Dancer--Farina can be replicated here, giving a contradiction in case there were more than one end (i.e. if $N>1$).
\item Therefore, necessarily $N=1$. On the other hand, this easily implies then that $u$ has energy density $1$ at infinity. We conclude that $u$ must be 1D by Wang's Allard-type theorem (see \cite{Wang17} or \Cref{thm:ACallard}).
\end{itemize} 
\item In the case $n=3$, we argue as follows instead:
\begin{itemize}
    \item Testing stability with a linear cutoff supported in a large annulus $B_R'\setminus \overline{B_{R_0}'}$ and letting $R\to\infty$ gives that
    \begin{equation}\label{eq:412fahpyh4y9uqby}
        \int_{\R^2\setminus \overline{B_{2R_0}'}} e^{-\sqrt{2}(f_{i+1}-f_i)}\leq C\,.
    \end{equation}
    \item Fix $i\in\{1,...,N\}$; by the Toda system we find then that $\Delta f_i\in L^1(\R^2\setminus \overline{B_{2R_0}'})$. This may seem like a really minor improvement from $|\Delta f_i|\leq\frac{C}{|x|^2}$; on the other hand, we have already broken the natural scaling of the equation.
    
    \item We can define then the ``Ansatz'' potential
    $$f_i^a(y):=\frac{1}{2\pi}\int_{\R^{n-1}\setminus \overline{B_{2R_0}'}} (\log|y-z|-\log |z|)\, \Delta f_i(z)\,dz\,.$$
    This ``renormalisation trick'' can be found, for instance, in \cite{CL91}. Letting $ c_i^a:=\frac{1}{2\pi}\int_{\R^{n-1}\setminus \overline{B_{2R_0}'}} \Delta f_i\in\R$, it follows easily that
    $$f_i^a = (c_i^a+o(1))\log|y|\quad\mbox{as}\quad|y|\to\infty\,.$$
    \item Setting $f_i^b:=f_i-f_i^a$, we see then that $\Delta f_i^b= 0$. A priori, $f_i^b$ could be just any harmonic function; on the other hand, it additionally has {\it sublinear growth} (since both $f_i$ and $f_i^a$ do).
    \item An enhanced version of the improvement of flatness iteration in annuli\footnote{Notice that $\Delta v_i^b$ is not limited by a fixed rate (unlike e.g. \eqref{eq:78gtoqrbg2}) anymore.} then shows that actually $$f_i^b=b_i^b+c_i^b\log|y|+O(|y|^{-\beta})\quad\mbox{as}\quad|y|\to\infty$$
    for every $\beta\in(0,1)$.
    \item Since $f_i=f_i^a+f_i^b$, this shows the asymptotics claimed in \Cref{thm:R3indclass} up to improving the expansion for $f_i^a$. For this, we need to go back to studying the interactions between different layers.
    \item As a starting point, we immediately see that $|c_{i+1}-c_i|\geq \sqrt{2}$, just by the integrability of the interactions in \eqref{eq:412fahpyh4y9uqby}. To conclude, we need to show that the inequality is strict: then, the integrability of the interactions improves by an algebraic rate, which easily yields the precise expansion for the $f_i^a$.
    \item Finally, the fact that $|c_{i+1}-c_i|= \sqrt{2}$ can indeed never occur is obtained by a careful inspection of the structure of the Toda system and of the integral defining $f_i^a$ in this setting. A key idea is that the uppermost and lowermost layers are ``special'': they are respectively sub- and superharmonic, since they satisfy (by \eqref{eq:78gtoqrbg}) equations with opposite signs, which in effect ``compresses'' all intermediate layers and ends up preventing the critical interaction.
\end{itemize} 
\end{itemize}

\noindent {\bf Acknowledgements.} It is a pleasure to thank Xavier Fern\'andez-Real and Joaquim Serra, from whom the author first learnt about the idea of improvement of flatness iterations in annuli. The author also thanks Hardy Chan, Marco Guaraco and Kelei Wang for insightful conversations about the results in the article.

The author was supported by the European Research Council under the Grant Agreement No 948029. 

\section{Previous results from the literature}
\subsection{Some general results on critical points}
\begin{lemma}[Modica inequality, \cite{Modica85}]\label{lem:modicaineq}
Let $u:\R^n\to\R$ be a bounded A--C solution on all of $\R^n$. Then, $|u|\leq 1$, and the inequality is strict unless $u\equiv\pm 1$. Moreover,
    $$\frac{|\nabla u|^2(x)}{2}\leq W(u(x))\,.$$
\end{lemma}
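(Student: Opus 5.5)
To prove the Modica inequality, I would argue in three steps: first the bound $|u|\leq 1$ by the maximum principle, then the gradient bound via a $P$-function and a blow-up/compactness argument, and finally the strictness statement by the strong maximum principle.

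\emph{Step 1: $|u|\le 1$.} Suppose $M:=\sup u>1$ and pick $x_k$ with $u(x_k)\to M$. Since $u$ is bounded and solves $\Delta u=W'(u)$ with bounded right-hand side, interior elliptic estimates give uniform $C^{2,\alpha}$ bounds. Hence the translates $u(\cdot+x_k)$ converge in $C^2_{loc}$ to a solution $\bar u$ attaining its maximum $M$ at $0$. There $0\ge \Delta \bar u(0)=W'(M)=M^3-M>0$, a contradiction. The same argument handles $\inf u<-1$. If $|u|$ reaches $1$ at an interior point, say $u(x_0)=1$, then $w=1-u\ge 0$ satisfies a linear equation $\Delta w = c(x)w$ with bounded $c$, since $W'(u)=-W'(1)+\dots$ is Lipschitz in $u$ and $W'(1)=0$. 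The strong maximum principle then forces $w\equiv0$. So either $u\equiv\pm1$ or $|u|<1$ everywhere.

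\emph{Step 2: the gradient bound.} Consider $P:=\frac12|\nabla u|^2-W(u)$, which is bounded by the estimates of Step 1. The standard computation uses Bochner's formula together with the equation, and gives an elliptic inequality on $\{\nabla u\neq0\}$ of the form
$$|\nabla u|^2\Delta P-2W'(u)\nabla u\cdot\nabla P\ \ge\ \tfrac{1}{2}|\nabla P|^2 .$$
Routine steps include $\Delta\frac12|\nabla u|^2=|D^2u|^2+W''(u)|\nabla u|^2$ and the Cauchy--Schwarz bound $|D^2u|^2|\nabla u|^2\ge |D^2u\nabla u|^2$, with $\nabla P=D^2u\nabla u-W'(u)\nabla u$.

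Suppose $\sup P=:\mu>0$ and take $x_k$ with $P(x_k)\to\mu$. By compactness as in Step 1, we pass to a limit solution $\bar u$ whose $\bar P$ attains the maximum $\mu$ at $0$. Since $\mu>0$, we have $\nabla\bar u\neq0$ near the set $\{\bar P=\mu\}$, so the differential inequality is uniformly elliptic there. The strong maximum principle then shows $\{\bar P=\mu\}$ is open, and hence $\bar P\equiv\mu$ on $\R^n$, because $\bar P=\mu>0$ also keeps $\nabla \bar u\ne0$ and the set is closed.

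\emph{Step 3: contradiction.} With $\bar P\equiv\mu$, we get $\nabla\bar P=0$, so $D^2\bar u\nabla\bar u=W'(\bar u)\nabla\bar u$. Equality must also hold in the Cauchy--Schwarz step, which makes $\bar u$ one-dimensional along the gradient direction. Then $\bar u$ solves the ODE $\bar u''=W'(\bar u)$ with $\frac12\bar u'^2=W(\bar u)+\mu$. Since $\bar u'^2\ge2\mu>0$, the function $\bar u$ is strictly monotone with speed bounded below, so it is unbounded, contradicting $|\bar u|\le1$. Therefore $\mu\le0$, which is the claimed inequality.

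\textbf{Main obstacle.} The delicate step is Step 2: deriving the correct $P$-function inequality and handling the degenerate set $\{\nabla u=0\}$. The limiting/compactness device avoids needing the inequality at critical points, because a positive maximum of $P$ forces $|\nabla u|^2\ge2\mu$ nearby.
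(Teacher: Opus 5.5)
Your proof is correct and is essentially the classical Modica $P$-function argument: translation compactness to attain $\sup P$, the strong maximum principle where $|\nabla \bar u|^2\ge 2\mu>0$, and then a one-dimensional contradiction with boundedness. The paper does not reprove the lemma; it relies on exactly this argument by citing \cite{Modica85}. (A minor point: with your normalisation $P=\frac12|\nabla u|^2-W(u)$, the computation actually gives $|\nabla P|^2$ rather than $\frac12|\nabla P|^2$ on the right-hand side, so your stated inequality holds a fortiori.)
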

Throughout this article, unless otherwise indicated we assume that $u:\R^n\to(-1,1)$ is a solution to A--C on all of $\R^n$, to make use of the Modica inequality. Set
\begin{equation}\label{eq:sig(n-1
)def}
    \sigma_{n-1}:=\omega_{n-1}\int_{-1}^1 \sqrt{2 W(s)}\,ds\,,\quad\mbox{where } \omega_{n-1}\mbox{ is the volume of the unit ball in } \R^{n-1}.
\end{equation}
We define the \emph{energy density} on balls of radius $r > 0$ by
\begin{equation}\label{eq:densitydef}
    \mathbf{M}_r(u) := \frac{1}{r^{n-1}} \mathcal{E}(u, B_r)\,.
\end{equation}
We will more generally denote ${\bf M}_r(u,x_0):={\bf M}_r(u(\cdot-x_0))$, and we omit $u$ from the notation whenever it is clear from the context.
    \begin{remark}\label{rmk:rescdens}
    For a solution $u$ of $\ep$-A--C instead, we naturally set ${\bf M}_r^\ep(u):=\frac{1}{r^{n-1}}\mathcal E^\ep(u,B_r)={\bf M}_{r/\ep}(u(\ep x))$. Unless otherwise stated, we will always work with $\ep=1$.
\end{remark}
\begin{lemma}[Monotonicity formula, \cite{Modica85b}]\label{lem:monformula}
    Let $u:\R^n\to[-1,1]$ be an A--C solution on all of $\R^n$. Then, ${\bf M}_r$ is monotone nondecreasing in $r$. More precisely,
    \begin{equation}\label{eq:monfor}
        \frac{d}{dr}{\bf M}_r(u)=\frac{1}{r^n}\int_{B_r}\frac{1}{\sigma_{n-1}}\left[W(u)-\frac{|\nabla u|^2}{2}\right]\,dx+\frac{1}{r^{n+1}}\int_{\partial B_r}\frac{1}{\sigma_{n-1}}(x\cdot\nabla u)^2\,d\cH^{n-1}(x)\,.
    \end{equation}
\end{lemma}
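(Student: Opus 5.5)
We prove the monotonicity formula \eqref{eq:monfor} by differentiating ${\bf M}_r$ directly and then simplifying with a Pohozaev-type identity. Nondecreasing monotonicity follows at once from the formula and Modica's inequality (\Cref{lem:modicaineq}): the bulk integrand $W(u)-\frac{|\nabla u|^2}{2}$ is nonnegative, and the boundary term is a square.

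\textbf{Step 1: differentiate.} Write $e(u):=\frac{|\nabla u|^2}{2}+W(u)$. The coarea formula gives
$$\frac{d}{dr}{\bf M}_r=\frac{1}{\sigma_{n-1}}\left[-\frac{n-1}{r^{n}}\int_{B_r}e(u)\,dx+\frac{1}{r^{n-1}}\int_{\partial B_r}e(u)\,d\cH^{n-1}\right].$$
This is legitimate because $u$ is smooth by elliptic regularity, since $u$ is bounded and $W$ is polynomial.

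\textbf{Step 2: Pohozaev identity.} Multiply the equation $\Delta u=W'(u)$ by $x\cdot\nabla u$ and integrate over $B_r$. We use two pointwise identities:
$$\operatorname{div}\big((x\cdot\nabla u)\nabla u\big)=|\nabla u|^2+x\cdot\nabla\tfrac{|\nabla u|^2}{2}+(x\cdot\nabla u)\Delta u,\qquad (x\cdot\nabla u)W'(u)=x\cdot\nabla W(u).$$
Integrating by parts, using $\operatorname{div}x=n$, and noting that the outer normal on $\partial B_r$ is $x/r$, we obtain
$$\frac{1}{r}\int_{\partial B_r}(x\cdot\nabla u)^2=\int_{B_r}\Big(1-\tfrac n2\Big)|\nabla u|^2-nW(u)\,dx+r\int_{\partial B_r}e(u)\,d\cH^{n-1}.$$

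\textbf{Step 3: substitute.} Solve Step 2 for $r\int_{\partial B_r}e(u)$ and insert it into Step 1. The bulk terms combine as
$$-(n-1)\Big(\tfrac{|\nabla u|^2}{2}+W\Big)+\Big(\tfrac n2-1\Big)|\nabla u|^2+nW=W-\tfrac{|\nabla u|^2}{2},$$
which is exactly the bulk term in \eqref{eq:monfor}. The boundary term becomes $\frac{1}{r^{n+1}}\int_{\partial B_r}(x\cdot\nabla u)^2$, matching the second term.

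The only real work is the bookkeeping of signs and powers of $r$ in Step 2. Once the formula is established, the sign conclusion uses only the Modica inequality, which holds since $u$ is an entire bounded solution.
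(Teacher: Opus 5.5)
Your argument is correct: the Pohozaev identity in Step 2 and the bookkeeping in Step 3 check out. They give exactly $W(u)-\frac{|\nabla u|^2}{2}$ in the bulk and $r^{-(n+1)}\int_{\partial B_r}(x\cdot\nabla u)^2$ on the boundary, and Modica's gradient bound then gives the sign. The paper gives no proof of its own and simply cites Modica, whose argument is this same one (your Pohozaev identity is the integrated form of the divergence-free stress--energy tensor), so your proposal matches the intended proof.
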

We say that $u$ has \emph{bounded energy density} if ${\bf M}_\infty(u):=\lim_{r \to \infty} \mathbf{M}_r(u) < \infty$. In particular, the energy growth assumptions in Theorems \ref{thm:R4indclass}, \ref{thm:finindclass} and \ref{thm:R3indclass} amount precisely to having bounded energy density.
\begin{definition}[Monotone 1D solution]
    Define $g:\R\to\R$ by
    \begin{equation}\label{eq:1dsoldef}
        g(x)=\tanh\left(\frac{x}{\sqrt{2}}\right)\,.
    \end{equation}
\end{definition}
Wang developed an analogue of Allard's regularity theory for stationary varifolds in the Allen--Cahn case. In particular, he obtained the following theorem, which allows to give a new proof of Savin's result \cite{Sav09}.
\begin{theorem}[\cite{Wang17}, Allard-type theorem for A--C]\label{thm:ACallard}
    Let $u:\R^n\to [-1,1]$ be a solution to A--C. Then, there is $\delta=\delta(n)>0$ such that if ${\bf M}_\infty(u)\leq 1+\delta$, then $u$ is either $\pm 1$ or of the form $g(a\cdot x+b)$ for some $a\in\Sp^{n-1}$ and $b\in\R$.
\end{theorem}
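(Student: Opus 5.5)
We argue by blow-down combined with an $\varepsilon$-regularity (excess decay) estimate at the diffuse level, following the scheme of Allard's theorem. Assume $u\not\equiv\pm1$, so $|u|<1$ by \Cref{lem:modicaineq} and $u$ has a nonempty zero set. By \Cref{lem:monformula}, ${\bf M}_r(u,x_0)\le {\bf M}_\infty(u)\le 1+\delta$ for every $x_0\in\R^n$ and $r>0$. The limit ${\bf M}_\infty$ does not depend on the centre $x_0$. Along a level point, ${\bf M}_r(u,x_0)\to 1$ as $r\to\infty$ from below is impossible unless the density is $\geq 1$, by the Hutchinson--Tonegawa integrality.

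\textbf{Step 1 (blow-down).} Consider $u_R(x)=u(Rx)$, which solves the $\ep$-Allen--Cahn equation with $\ep=1/R$. By Hutchinson--Tonegawa, along a subsequence the energy measures converge to a stationary integral varifold $V$ with integer density a.e. on its support. The density ratios of $V$ at the origin are all equal to ${\bf M}_\infty(u)\in[1,1+\delta]$ at every radius. Hence, by the equality case of the monotonicity formula, $V$ is a cone. Since its density is everywhere $\le 1+\delta<2$, Allard's regularity theorem implies, for $\delta$ small, that $V$ is a multiplicity-one hyperplane $\{x\cdot e=0\}$.

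\textbf{Step 2 (diffuse excess decay).} This is the main obstacle, namely Wang's $\ep$-regularity theorem. Define the tilt excess of $u$ in $B_r$ with respect to a direction $e$ by
$$
\mathcal{T}(r,e)=\frac{1}{r^{n-1}}\int_{B_r}\bigl(1-(\nu\cdot e)^2\bigr)|\nabla u|^2, \qquad \nu=\frac{\nabla u}{|\nabla u|}.
$$
Together with $\mathcal{T}$ we track the discrepancy $\frac{|\nabla u|^2}{2}-W(u)$, which is $\le 0$ by Modica. The claim is that there exist $\theta\in(0,1/2)$, $\alpha>0$ and $\eta>0$ with the following property. If in $B_r$ the energy density is $\le 1+\delta$, the zero set lies in an $\eta r$-slab around $e^\perp$, and $r\ge r_0$, then
$$
\min_{e'}\mathcal{T}(\theta r,e')\le \theta^{2\alpha}\,\mathcal{T}(r,e)+C r^{-\gamma}.
$$
The proof would proceed by contradiction and linearisation. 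One uses a Caccioppoli-type inequality, obtained by testing the stationarity identity $\operatorname{div}T=0$ for the stress tensor $T=\bigl(\frac{|\nabla u|^2}{2}+W\bigr)I-\nabla u\otimes\nabla u$ with suitable vector fields. Then the level sets, written as Lipschitz graphs off a small bad set thanks to density close to $1$ (multiplicity one), are shown after normalisation by the square root of the excess to converge to a harmonic function. The additive error term coming from the diffuse width is where genuine Allen--Cahn estimates enter: exponential decay of $1-|u|$ away from the interface and the $1$D profile approximation.

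\textbf{Step 3 (conclusion).} By Step 1, for all large $R$ the hypotheses of Step 2 hold in $B_R$. Iterating from scale $R$ down to any fixed scale $r$ gives
$$
\mathcal{T}(r,e_r)\le C(r/R)^{2\alpha}\,\mathcal{T}(R,e_R)+C r^{-\gamma'}.
$$
The quantity $\mathcal{T}(R,e_R)$ is bounded by the energy density, hence uniformly. Letting $R\to\infty$ and then comparing scales yields $\mathcal{T}(r,e)=0$ for a fixed direction $e$ and all $r$. Thus $\nabla u\parallel e$ everywhere, so $u$ is a function of $x\cdot e$ alone. The resulting ODE $u''=W'(u)$ with $|u|<1$ and finite energy density forces $u=g(x\cdot e+b)$, up to the sign of $e$.
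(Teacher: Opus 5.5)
\textbf{Gap in Step 3.} The paper does not prove this statement; it quotes \cite{Wang17}, whose argument is a diffuse-interface version of Allard's tilt-excess decay. Your framework (blow-down to a multiplicity-one hyperplane via Hutchinson--Tonegawa and Allard, then excess decay) is therefore the right one, and Step 1 is fine. The gap is at the end of Step 3.

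The additive term $Cr^{-\gamma}$ in your decay inequality matters. In a compactness/harmonic-approximation proof like the one you sketch, it marks the threshold $\mathcal T\sim\epsilon^2$ below which the normalised blow-ups stop linearising, because diffuse errors of relative size $\epsilon\,\mathcal T^{-1/2}$ survive. So with $\epsilon=1$ you get $\gamma=2$. Iterating and sending $R\to\infty$ then yields only $\mathcal T(r,e_r)\le Cr^{-2}$. Comparing scales gives, for fixed $\rho\le r$,
\[
\int_{B_\rho}\bigl(1-(\nu\cdot e_r)^2\bigr)|\nabla u|^2\le r^{n-1}\,\mathcal T(r,e_r)\le C\,r^{n-3},
\]
which tends to $0$ only when $n=2$. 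For $n\ge 3$, an excess of order $r^{-2}$ at every large scale does not force flatness. It only bounds the height oscillation by $O(1)$ per dyadic scale, so it is compatible with, e.g., logarithmic growth of the level set. Hence $\mathcal T(r,e)=0$ does not follow. A pointwise Allard-type $C^{1,\alpha}$ bound does not follow either, since the additive term dominates precisely at the small scales.

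\textbf{What is missing.} You need an estimate that works below the diffuse threshold: uniform $C^{1,\alpha}$ control of flat level sets all the way down to scale $\epsilon$. Possible sources are Caffarelli--C\'ordoba-type regularity for flat Lipschitz level sets, or a refined linearisation around the one-dimensional profile. With such an estimate the argument closes:
\begin{itemize}
\item The level sets of $u_R(x)=u(Rx)$ in $B_{1/2}$ are graphs whose $C^{1,\alpha}$ norms are bounded independently of $R$.
\item Scaling back, $\{u=t\}\cap B_{R/2}$ is the graph of some $F_R$ with $[\nabla F_R]_{C^{\alpha}(B'_{R/2})}\le CR^{-\alpha}\to0$.
\item So every level set is a hyperplane, the level sets are parallel, and your ODE argument finishes the proof.
\end{itemize}
Note also that Step 2 is the entire content of the cited theorem, and you only outline it.
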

Throughout the paper we adopt the following notations: Given a point $x \in \mathbb{R}^n$, we write $x' \in \mathbb{R}^{n-1}$ for its first $n-1$ coordinates, so that $x = (x', x_n)$. Moreover, given a set $\Omega \subset \mathbb{R}^{n-1}$ and a function $g : \Omega \to \mathbb{R}$, we denote by ${\rm graph}\, g$ the set of points in $\Omega \times \mathbb{R}$ satisfying $x_n = g(x')$. Set $\mathcal C_R:=B_R'\times [-R,R]$, where $B_R'\subset \R^{n-1}$.\\

Finally, we recall the following standard lemma:
\begin{lemma}[Exponential decay]\label{lem:expdecay}
    Let $u:\R^n\to\R$ be an A--C solution, and assume that
    $$\{|u|\leq 0.85\}\cap \mathcal C_{4R}\subset \{|x_n|\leq \delta R\}\,,$$
    where $\delta \leq \frac{1}{2}$. Then, there are dimensional constants $C$ and $c_0>0$ such that
    \begin{equation}\label{eq:expdecay}
        \sup_{\mathcal C_{2R}\cap\{|x_n|\geq 2\delta R\}}\left[\frac{|\nabla u|^2}{2}+W(u)\right]\leq Ce^{-c_0\delta R}\,.
    \end{equation}
\end{lemma}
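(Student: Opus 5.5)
The statement to prove is the exponential decay estimate of \Cref{lem:expdecay}. I would prove it by a barrier/comparison argument applied to the energy density, or more simply to $1-u^2$, on the region where $|u|$ stays above $0.85$.

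\textbf{Step 1: connected components and sign.} Fix one of the two slabs, say $\mathcal C_{4R}\cap\{x_n>\delta R\}$. This region is connected and contains no point where $|u|\le 0.85$. By continuity, $u$ has a constant sign there; without loss of generality assume $u>0.85$. Set $w:=1-u\in(0,0.15)$ on this region. The Allen--Cahn equation gives
\[
\Delta w=-\Delta u=-W'(u)=u(1-u^2)=u(1+u)\,w\ge c_1 w,
\]
with $c_1=0.85\cdot 1.85>1$. So $w$ is a positive subsolution of $\Delta w-c_1w\ge0$ and is bounded by $0.15$ on the boundary of the region.

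\textbf{Step 2: barrier.} The key is a supersolution of $\Delta\phi-c_1\phi\le 0$ that dominates $0.15$ on the boundary of a suitable subdomain and is exponentially small in its interior. Take the domain $D=B'_{3R}\times(\delta R,3R)$ and
\[
\phi(x)=0.15\sum_{\pm}\Bigl[e^{\mu(\delta R-x_n)}+e^{\mu(x_n-3R)}\Bigr]+0.15\sum_{j=1}^{n-1}\Bigl[e^{\mu(x_j-3R)}+e^{-\mu(x_j+3R)}\Bigr],
\]
with $\mu^2<c_1/n$ (a dimensional constant). Each summand satisfies $\Delta\psi=\mu^2\psi\le c_1\psi$, and on $\partial D$ at least one summand equals $0.15$, because on $\partial B'_{3R}$ some coordinate satisfies $|x_j|\ge 3R/\sqrt{n-1}$. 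To make this exact I would use the cube $[-3R,3R]^{n-1}$ instead of the ball, which suffices since it contains the needed region after shrinking constants. The maximum principle for $\Delta-c_1$ then gives $w\le\phi$ in $D$.

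On $\mathcal C_{2R}\cap\{x_n\ge 2\delta R\}$, every term of $\phi$ is at most $Ce^{-\mu\min(\delta R,\,R/\sqrt{n-1})}\le Ce^{-c\delta R}$, using $\delta\le 1/2$. Hence $W(u)\le C w^2\le Ce^{-2c\delta R}$.

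\textbf{Step 3: gradient.} For the gradient, I would apply interior elliptic estimates to $w$ on unit balls $B_1(x)$ with $x\in\mathcal C_{2R}\cap\{x_n\ge2\delta R\}$, once $R$ is large. The equation $\Delta w=u(1+u)w$ has a bounded coefficient, so $|\nabla u(x)|\le C\sup_{B_1(x)}w$. The same barrier bound holds on $B_1(x)$ after shifting constants: use $\delta R-1$ in place of $\delta R$, absorbed for $\delta R\ge2$. When $\delta R\lesssim1$ the estimate is trivial, since the left side is bounded by a constant by the Modica inequality. The lower slab is handled symmetrically.

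\textbf{Main obstacle.} The only delicate step is the barrier geometry: making sure lateral boundary effects decay at the rate $\delta R$ rather than at a worse one, which works precisely because $\delta\le1/2$ keeps the lateral distance $\gtrsim R\ge\delta R$.
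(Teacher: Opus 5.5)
The paper gives no proof of this lemma; it is recalled as standard. Your argument is the standard one and the overall approach is correct. On the connected set $\mathcal C_{4R}\cap\{\pm x_n>\delta R\}$ the function $w=1-|u|$ satisfies $\Delta w=|u|(1+|u|)\,w\ge c_1 w$ with $c_1=0.85\cdot 1.85>1$. Comparing $w$ with an exponential supersolution of $\Delta-c_1$ then gives the decay.

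\textbf{The barrier geometry needs repair.} The cube $[-3R,3R]^{n-1}$ is not contained in $B'_{4R}$ once $n\ge 3$: its corners are at distance $3\sqrt{n-1}\,R>4R$. Outside $\mathcal C_{4R}$ the hypothesis says nothing, and $u$ may even change sign there. So on that domain you have neither $\Delta w\ge c_1 w$ nor $w\le 0.15$ on the boundary. ``Shrinking constants'' does not rescue this. A cube containing $B'_{2R}$ has half-side at least $2R$, hence circumradius at least $2R\sqrt{n-1}$. It therefore fits inside $B'_{4R}$ only if $n\le 5$.

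\textbf{How to fix it.} Localise at each target point. For $x_0\in\mathcal C_{2R}\cap\{x_n\ge 2\delta R\}$, the assumption $\delta\le\frac12$ gives $B_{\delta R}(x_0)\subset \mathcal C_{4R}\cap\{x_n>\delta R\}$. Compare $w$ on the inscribed cube $Q=x_0+(-L,L)^n$, with $L=\delta R/\sqrt n$, against
\[
\phi(x)=0.15\sum_{j=1}^{n}\frac{\cosh(x_j-x_{0,j})}{\cosh L}\,.
\]
This $\phi$ satisfies $\Delta\phi=\phi\le c_1\phi$ and $\phi\ge 0.15\ge w$ on $\partial Q$. The maximum principle then yields $w(x_0)\le 0.3\,n\,e^{-\delta R/\sqrt n}$. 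Equivalently, you can centre your own slab-and-cube barrier at $x_0'$ with lateral half-side $2R/\sqrt{n-1}$.

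\textbf{Step 3 can be shortened.} Since $u$ is a bounded entire solution, Modica's inequality (\Cref{lem:modicaineq}) gives directly
\[
\frac{|\nabla u|^2}{2}+W(u)\le 2W(u)\le 2w^2 .
\]
Your interior-estimate route is also valid, and has the advantage of not needing $u$ to be entire.
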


\subsection{Curvature estimates for stable solutions}\label{sec:wangwei}
Let
\begin{align}
        \mathcal A^2(u)&=\begin{cases}
            \frac{|D^2u|^2-|\nabla|\nabla u||^2}{|\nabla u|^2}\quad &\mbox{ if } \nabla u\neq 0\\
            0 &\mbox{ otherwise\,;}
        \end{cases}
        \qquad \quad \mbox{and}\qquad \mathcal A(u)=\left(\mathcal A^2(u)\right)^{1/2}. \label{eq:AC2formdef}
    \end{align}
It is easy to see that if $\nabla u(x)\neq 0$ then $\mathcal A^2(u)(x)=|{\rm I\negthinspace I}_{\{u=u(x)\}}|^2(x)+|\nabla^T \log|\nabla u||^2(x)$, where ${\rm I\negthinspace I}_{\{u=u(x)\}}$ is the second fundamental form of the level set $\{u=u(x)\}$ and $\nabla^T$ denotes the gradient in the directions tangent to $\{u=u(x)\}$.

The following (weaker) form of the stability inequality is well known:
\begin{proposition}[Sternberg--Zumbrun inequality, \cite{SZ98}]\label{prop:SZstab}
    Let $u:\Omega\subset\R^n\to \R$ be a stable solution to A--C, and let $\eta\in C_c^1(\Omega)$. Then,
    \begin{equation}\label{eq:SZineq1}
        \int_{\Omega}\mathcal A^2\eta^2\,|\nabla u|^2\leq\int_{\Omega} |\nabla \eta|^2|\nabla u|^2\,.
    \end{equation}
\end{proposition}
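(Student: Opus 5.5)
The plan is to test the second variation $Q$ (with $\ep=1$) against the product $\eta\,|\nabla u|$ and then use the pointwise identity
\[
|\nabla u|\,\Delta|\nabla u| = \mathcal A^2(u)\,|\nabla u|^2 + W''(u)\,|\nabla u|^2,
\]
which holds on $\{\nabla u\neq 0\}$.

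\textbf{Step 1: the identity.} Differentiate the equation $\Delta u=W'(u)$ to get $\Delta\partial_j u=W''(u)\partial_j u$. Then
\[
\tfrac12\Delta|\nabla u|^2=|D^2u|^2+W''(u)|\nabla u|^2 .
\]
On the other hand,
\[
\tfrac12\Delta|\nabla u|^2=|\nabla u|\Delta|\nabla u|+|\nabla|\nabla u||^2 .
\]
Subtracting the two expressions and recalling the definition of $\mathcal A^2$ in \eqref{eq:AC2formdef} gives the identity.

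\textbf{Step 2: plug into stability.} Write $\phi=|\nabla u|$; this is Lipschitz. Stability gives $Q(\eta\phi,\eta\phi)\ge 0$. Expanding,
\[
\int|\nabla(\eta\phi)|^2=\int \phi^2|\nabla\eta|^2+\int\eta^2|\nabla\phi|^2+\tfrac12\int\nabla(\eta^2)\cdot\nabla(\phi^2).
\]
Integrate the middle and last terms by parts to get
\[
\int\eta^2|\nabla\phi|^2+\tfrac12\int\nabla(\eta^2)\cdot\nabla(\phi^2)=-\int\eta^2\phi\,\Delta\phi .
\]
This is where the interpretation matters: $\Delta\phi$ is a distribution. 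Combining with Step 1, the $W''$ terms cancel and stability becomes
\[
0\le \int\phi^2|\nabla\eta|^2-\int\mathcal A^2\eta^2\phi^2 ,
\]
which is \eqref{eq:SZineq1}.

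\textbf{Step 3: rigour (the main obstacle).} The delicate point is the critical set $\{\nabla u=0\}$, where $\phi$ is not smooth. I would replace $\phi$ by the regularisation $\phi_\delta=\sqrt{|\nabla u|^2+\delta^2}$, which is smooth. A direct computation gives
\[
\phi_\delta\Delta\phi_\delta=|D^2u|^2-|\nabla\phi_\delta|^2+W''(u)|\nabla u|^2 .
\]
The bracket $|D^2u|^2-|\nabla\phi_\delta|^2$ is nonnegative and increases to $\mathcal A^2|\nabla u|^2$ on $\{\nabla u\ne0\}$ as $\delta\to0$. It also dominates the limit there, since $|\nabla\phi_\delta|\le|\nabla|\nabla u||$. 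Next, test $Q$ with $\eta\phi_\delta$. The error terms $W''(u)\,\delta^2\eta^2$ and $\delta^2|\nabla\eta|^2$ tend to zero. Letting $\delta\to0$ by Fatou's lemma (monotone convergence) then yields \eqref{eq:SZineq1}, where we use $\mathcal A=0$ on the critical set by definition. Alternatively, one can note that $\{\nabla u=0\}$ contributes nothing because $|\nabla u|^2$ vanishes there.
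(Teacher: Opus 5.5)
Your argument is correct and is the standard Sternberg--Zumbrun computation: you test stability with $\eta|\nabla u|$ and use $\Delta\partial_j u=W''(u)\partial_j u$. The paper does not reprove this but cites it from \cite{SZ98}, and your regularisation $\phi_\delta=\sqrt{|\nabla u|^2+\delta^2}$ handles the critical set cleanly. One small slip: as $\delta\to0$ the bracket $|D^2u|^2-|\nabla\phi_\delta|^2$ \emph{decreases} (not increases) to $\mathcal A^2|\nabla u|^2$ on $\{\nabla u\neq0\}$. This is harmless, because for every $\delta$ the bracket dominates $\mathcal A^2|\nabla u|^2$ pointwise, including on $\{\nabla u=0\}$ where it equals $|D^2u|^2\geq0$, so the inequality follows directly without Fatou.
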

\begin{definition}[Sheeting assumptions]\label{def:sheetassump}
Let $u:B_R\to (-1,1)$ be a stable $\ep$-A--C solution. We say that $u$ satisfies the sheeting assumptions in $B_R$ (with constant $C_1>0$) if
\begin{equation}\label{eq:sheetassump}
     \ep|\nabla u|\geq \frac{1}{C_1}\quad \mbox{and} \quad R|\mathcal{A}_u|\leq C_1 \quad \mbox{in } \{|u|\leq 0.9\}\cap B_R.
\end{equation}
\end{definition}

Since $\mathcal A$ controls the curvatures of the level sets, one has the following standard lemma:
\begin{lemma}\label{lem:rjbagbda}
    In the setting of \Cref{def:sheetassump}, assume that $u(0)= t\in [-0.9,0.9]$. There are $\delta\in(0,1)$ and $C_2$, depending on $C_1$ and $n$, such that---after choosing a suitable Euclidean coordinate frame---the following holds:\\
    Let $\{\Gamma^t_i\}_{i=1}^N$ denote the connected components of $\{u=t\}\cap  \mathcal C_{\delta R}$ intersecting $B_{\delta^2 R}$. Then there are $f^t_1<...< f^t_N$, $f^t_i\in C^\infty(B_{\delta R}')$, such that
    \begin{equation*}
        \Gamma_i^t={\rm graph} \,f_i^t\quad \mbox{in }\ \mathcal C_{\delta R}\quad\mbox{and}\quad |D f_i^t|+R|D^2 f_i^t|\leq C_2\,.
    \end{equation*}
\end{lemma}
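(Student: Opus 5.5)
Since the sheeting assumptions are invariant under rescaling $x\mapsto x/\ep$, we may take $\ep=1$ and $R$ large. The first step is the level set $\{u=t\}$ near $0$. Because $|\nabla u|\geq 1/C_1$ on $\{|u|\leq 0.9\}\cap B_R$, the implicit function theorem makes $\{u=t\}$ a smooth hypersurface there. Its second fundamental form satisfies $|{\rm I\negthinspace I}|\leq\mathcal A\leq C_1/R$, by the identity $\mathcal A^2=|{\rm I\negthinspace I}|^2+|\nabla^T\log|\nabla u||^2$. Choose coordinates so that $\nu(0)=\nabla u(0)/|\nabla u(0)|=e_n$.

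Next, control the normal $\nu=\nabla u/|\nabla u|$ on the whole transition region. We have
$$|\nabla\nu|\leq |D^2u|/|\nabla u|\lesssim \mathcal A+|\partial_\nu\log|\nabla u||.$$
The tangential part is bounded by $C_1/R$. The normal derivative $\partial_\nu|\nabla u|$ is not small, so $\nu$ can only be expected to vary slowly along level sets, not across them. I handle this by working along each level set separately. On a connected component $\Gamma$ of $\{u=t\}\cap B_{\delta R}$ passing within $\delta^2R$ of the origin, the bound $|\nabla^T\nu|\leq C_1/R$ together with an intrinsic-distance argument gives $|\nu-\nu(p)|\leq C C_1\delta$ along $\Gamma$, for $p\in\Gamma\cap B_{\delta^2R}$.

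The remaining issue is that different components could have tilted normals. To rule this out, compare components through the region $\{|u|\leq 0.9\}$. The gradient lower bound and the bound $|u|\leq1$ imply that along the integral curve of $\nu$, $u$ passes from $t$ to $\pm0.9$ within distance $O(C_1)$. The $C^{1,1}$ bounds for $u$ (from elliptic estimates, with $|D^2u|\leq C$) show that $\nu$ changes by at most $O(C_1)\cdot C/|\nabla u|$ along this curve. That is not small.

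The better route is the standard one: each component of $\{u=t\}$ with curvature at most $C_1/R$ is, in a cylinder $\mathcal C_{\delta R}$ around a point, a single graph over its own tangent plane. Two components meeting $B_{\delta^2R}$ must be disjoint embedded surfaces with curvature at most $C_1/R$. If their tangent planes differed by a definite angle, they would intersect inside $\mathcal C_{\delta R}$ once $\delta$ is small. Hence all tangent planes at points of $B_{\delta^2R}$ differ by $O(\delta)$, with angles made quantitative using the curvature bound over scale $\delta R$. Each component is then a graph over $\{x_n=0\}$ of a function $f_i^t$. The bounds $|Df_i^t|\leq C$ and $|D^2f_i^t|\leq C/R$ follow from the graph equation ${\rm I\negthinspace I}\sim D^2f/(1+|Df|^2)^{3/2}$ together with slope control.

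Finally, order and count the components. Disjointness of the graphs gives a strict ordering $f_1^t<\dots<f_N^t$. Finiteness of $N$ comes from compactness and smoothness: the level set is a closed smooth hypersurface in a compact region with uniformly positive gradient, so it has finitely many components there. The main obstacle I expect is making the transversality/non-intersection step quantitative, namely showing that each component extends as a full graph over all of $B'_{\delta R}$ rather than exiting through the top or bottom of the cylinder. I would handle this with a continuity argument along radial rays in $B'_{\delta R}$, using the slope bound to keep each graph inside $|x_n|\leq C\delta^2 R$.
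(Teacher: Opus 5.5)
Your proposal is correct and is essentially the standard argument the paper has in mind; the paper gives no proof beyond noting that $\mathcal A$ controls the curvatures of the level sets. Your route is the expected one: the gradient lower bound makes $\{u=t\}$ smooth, $|\mathrm{II}|\le\mathcal A\le C_1/R$, and bounded-curvature graphicality together with the non-crossing of disjoint sheets at a scale $\delta R\ll R/C_1$ yields the ordered graphs $f_1^t<\dots<f_N^t$ with the stated bounds. You can drop the exploratory paragraphs on controlling $\nu$ across the transition region, as well as the reduction to ``$R$ large'', which rescaling does not provide and the argument never uses.
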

Wang and Wei showed that a-priori $C^2$ bounds can be upgraded to $C^{2,\vartheta}$ bounds, obtaining moreover improved separation and mean curvature estimates.
\begin{theorem}[$C^2$ implies $C^{2,\vartheta}$, \cite{WW19}]\label{thm:sheetimpc2alpha}
    Let $u:B_R\subset \R^n\to (-1,1)$ be a stable $\ep$-A--C solution, and assume the sheeting assumptions hold in $B_R$ for some $C_1$. In particular, the conclusion of \Cref{lem:rjbagbda} holds.
    
     Then, for every $\vartheta\in(0,1)$ there are $C_2$ and $\delta_0>0$ depending additionally on $\vartheta$ such that if $\frac{\ep}{R}\leq \delta_0$, then
    \begin{equation}\label{eq:c2alphasheet}
        \|D^2 f_i^t\|_{L^\infty}+[D^2 f_i^t]_{C^\vartheta}\leq \frac{C_2}{R}\,.
    \end{equation}
    Furthermore, letting $\HH[f]:={\rm div}(\frac{\nabla f}{\sqrt{1+|\nabla f|^2}})$ denote the mean curvature operator,
    \begin{equation}\label{eq:meancurvsheet}
         \|\HH[f_i^t]\|_{L^\infty}+ \big[\HH[ f_i^t]\big]_{C^\vartheta}\leq \frac{\ep C_2}{R^2}\,.
    \end{equation}

    Additionally, the separation between layers satisfies
    \begin{equation}\label{eq:wwsepbnd}
        f_{i+1}^t-f_i^t\geq \frac{1+\vartheta}{2}\sqrt{2}\ep\log{\left(\frac{R}{\ep}\right)}.
    \end{equation}
\end{theorem}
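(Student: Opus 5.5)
This theorem is quoted from \cite{WW19}, so the task is to reconstruct the argument of Wang--Wei rather than to find a new route. We rescale to $\ep=1$ on a ball of radius $R/\ep=:L$. The sheeting assumptions together with \Cref{lem:rjbagbda} then give graphs $f_i$ over $B'_{\delta L}$ with $|Df_i|+L|D^2f_i|\le C$.

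The proof is a bootstrap built on three ingredients.

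\begin{enumerate}
\item \textbf{Ansatz and Fermi coordinates.} In Fermi coordinates $(y,z)$ adapted to each layer $\Gamma_i$, we write
\[
u=\sum_i g_i+\phi,
\]
where $g_i$ is the 1D profile $g$ placed with alternating signs at distance $z-h_i(y)$ from $\Gamma_i$, suitably cut off. We impose the orthogonality $\int\phi\, g'(z-h_i)\,dz=0$ along each layer; this determines the $h_i$ and hence the optimal position of each layer.

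\item \textbf{Reduced equation and error estimate.} Projecting the equation onto $g'$ yields a Toda-type system
\[
H_i=\kappa_0\bigl(e^{-\sqrt2 d_{i,i-1}}-e^{-\sqrt2 d_{i,i+1}}\bigr)+E_i .
\]
Linear estimates for the operator $-\Delta+W''(g)$ on the orthogonal complement then give
\[
\|\phi\|_{C^{2,\vartheta}}\lesssim L^{-2}+\sum_i e^{-\sqrt2 D_i},
\qquad
\|E_i\|_{C^\vartheta}\lesssim \|\phi\|^2 + L^{-2}+\cdots,
\]
where $D_i$ is the local separation. Here the curvature terms of size $|\mathrm{II}|^2\sim L^{-2}$ enter, and the $C^\vartheta$ bound on $\mathrm{II}$ is initially available only through interior estimates for $H$.

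\item \textbf{Stability kills the interaction terms.} We test the stability inequality with functions of the form $\eta(y)\,g'(\cdot-h_i)$, or equivalently use \Cref{prop:SZstab}. Combined with the Toda system, this gives a stability inequality for the reduced system. That inequality forces the interaction terms to be small, of size $O(L^{-2})$ in an averaged sense. This is analogous to stable solutions of $\Delta v=e^{-v}$ in low dimensions, but here we only need scale-invariant bounds.
\end{enumerate}

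We then run a contradiction/blow-up argument in the scale parameter. Suppose the separation $\min D_i$ were smaller than $\frac{1+\vartheta}{2}\sqrt2\log L$ at some point. Rescale around that point by the natural Toda scale $e^{\sqrt2 D/2}$ and pass to the limit. Since $(1+\vartheta)/2<1$, curvature errors of order $L^{-2}$ become negligible at that scale. The limiting object is a stable entire solution of a Toda system with nontrivial interaction, which contradicts the stability estimate from step 3.

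Once \eqref{eq:wwsepbnd} is known, the interaction terms are $O(L^{-1-\vartheta})$ pointwise. Feeding this into the reduced system gives $H_i=O(L^{-2})$ in $C^\vartheta$, which is \eqref{eq:meancurvsheet} after scaling back to $\ep$. Schauder estimates for the mean curvature operator $\HH$, with the $C^{1,1}$ bounds on $f_i$ as input, then yield \eqref{eq:c2alphasheet}.

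The main obstacle is closing the circular dependence between $\|\phi\|$, the $C^\vartheta$ regularity of $\mathrm{II}$, and the separation $D_i$. I would handle it with a weighted norm in which weights depend on the local separation, run simultaneously on all scales $r\in[1,L]$, and absorb terms via the smallness $\ep/R\le\delta_0$. The blow-up classification in step 3 is the delicate point, because stability must pass to the limit.
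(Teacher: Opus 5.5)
The paper does not prove this theorem; it quotes it from \cite{WW19}. Your ansatz, Fermi coordinates and reduced Toda system match the \cite{WW19} machinery reproduced in \Cref{sec:todaimprov}, but the proposal has two genuine gaps.

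\textbf{The blow-up step does not close.} At the Toda scale, the limit is an entire solution of the Toda system on $\R^{n-1}$ satisfying the reduced stability inequality. The only consequence of stability you invoke is the averaged bound from testing with cutoffs: for an adjacent difference $v=f_i-f_{i-1}$ this gives $\int_{B_\rho}e^{-\sqrt2 v}\lesssim\rho^{n-3}$, i.e.\ interaction $O(\rho^{-2})$ on average. That bound is scale-invariant, so the rescaled limit satisfies it at every $\rho$. It yields a contradiction only when $n-1=1$.

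What is missing is a Liouville theorem for stable entire solutions of the Toda system, and it is genuinely dimension dependent. One must test stability with $e^{-q\sqrt2 v}\eta$, $q<2$, as in \cite{Far07}, to upgrade to $e^{-\sqrt2 v}\in L^{2q+1}$. A Harnack/Moser argument for $-\Delta V\lesssim V^2$, with $V=e^{-\sqrt2 v}$, then needs $2q+1\ge (n-1)/2$, which is possible only if $n\le 10$. This is the source of the restriction $n\le 10$ in \Cref{thm:sheetpart2}; the same computation appears, in exterior-domain form, in \Cref{sec:finindclass}. For $n-1\ge 10$ the equation $\Delta v=e^{-v}$ does have stable entire radial solutions, asymptotic to $2\log|y|+\mathrm{const}$. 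So an argument that never uses $n$, as yours does not, cannot produce the contradiction; your remark that ``here we only need scale-invariant bounds'' is exactly where it fails.

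\textbf{The mean curvature bound does not follow from the separation bound.} The separation \eqref{eq:wwsepbnd} only makes the interactions $O(L^{-1-\vartheta})$, hence $H_i=O(L^{-1-\vartheta})$, not $O(L^{-2})$. The outermost layer satisfies $H_N\approx\kappa_0e^{-\sqrt2 D_N^-}$, and one can then go down the layers. So \eqref{eq:meancurvsheet} is essentially equivalent to the optimal separation $e^{-\sqrt2 D}\lesssim L^{-2}$. This is the bound $M\le C|y|^{-2}$ from \cite[Proposition 10.1]{WW19} used in \Cref{sec:todaimprov}, and it is strictly stronger than \eqref{eq:wwsepbnd}.

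Getting it requires a further use of the stability/Liouville input above. The weaker bound \eqref{eq:wwsepbnd} helps make error terms such as $M^{7/6}$ and $\|\phi\|^2$ in the reduced equation lower order, but it does not by itself give the optimal separation. Only after \eqref{eq:meancurvsheet} is established do the Schauder estimates give \eqref{eq:c2alphasheet}.
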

\begin{remark}
    We emphasise that we will consider $\ep=1$ in the vast majority of the article.
\end{remark}
\Cref{thm:sheetimpc2alpha} is an a-priori result. However, \textit{assuming} additionally \eqref{eq:stableclass}, local sheeting assumptions for stable solutions hold indeed. It is worth emphasising that, unlike for minimal hypersurfaces, this fact is highly nontrivial and its proof requires using \Cref{thm:sheetimpc2alpha} itself.
\begin{theorem}[Stability conjecture and sheeting]\label{thm:sheetstabconj}
    Fix $n\in\N$, with $n\leq 7$. Assume that \eqref{eq:stableclass} is true in $\R^n$. Then, the following holds:

    Let $u:B_{R}\to(-1,1)$ be a stable $\ep$-A--C solution, with ${\bf M}_{R}\leq C_0$. Then, there are $C_1$ and $\delta_0>0$, depending only on $C_0$, such that if $\frac{\ep}{R}\leq \delta_0$ then $u$ satisfies the sheeting assumptions in $B_{R/2}$ with constant $C_1$.
\end{theorem}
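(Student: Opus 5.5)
The plan is a compactness and contradiction argument in the spirit of Wang--Wei and Chodosh--Mantoulidis, combining the classification \eqref{eq:stableclass} with the a-priori upgrade of \Cref{thm:sheetimpc2alpha}. By rescaling we may take $R=1$ and consider solutions of the $\ep$-equation with $\ep\to 0$. Suppose the conclusion fails for every choice of $C_1$ and $\delta_0$. Then there are stable solutions $u_k$ in $B_1$ with ${\bf M}_1\le C_0$ and $\ep_k\to0$, together with points $p_k\in\{|u_k|\le 0.9\}\cap B_{1/2}$ at which either $\ep_k|\nabla u_k|(p_k)\to 0$ or $|\mathcal A_{u_k}|(p_k)\to\infty$.

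The first step treats the gradient lower bound and the curvature bound at scale $\ep$. Blow up around $p_k$ by setting $v_k(x)=u_k(p_k+\ep_k x)$, which is a stable $1$-A--C solution on $B_{1/(2\ep_k)}$. Its energy density is bounded on all scales, by the monotonicity formula (\Cref{lem:monformula}, in its local form) together with ${\bf M}_1\le C_0$. Elliptic estimates give a subsequence converging in $C^2_{loc}$ to an entire stable solution $v_\infty$ with $\mathcal E(v_\infty,B_r)\le Cr^{n-1}$ and $|v_\infty(0)|\le 0.9$. By \eqref{eq:stableclass}, $v_\infty$ is one-dimensional, so $|\nabla v_\infty|\ge c>0$ on $\{|v_\infty|\le0.9\}$ and $\mathcal A_{v_\infty}\equiv0$. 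This already yields $\ep|\nabla u|\ge 1/C_1$ and $\ep|\mathcal A_u|=o(1)$ in $\{|u|\le0.9\}\cap B_{3/4}$. The second statement is not yet the required bound $|\mathcal A_u|\le C_1$ in $B_{1/2}$.

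The second step, which is the main obstacle, improves the curvature bound from scale $\ep$ to scale $1$. Here \Cref{thm:sheetimpc2alpha} is used in a point-picking (doubling) argument. Let $r_k^{-1}:=\sup_{x\in B_{3/4}\cap\{|u_k|\le0.9\}}(\tfrac34-|x|)|\mathcal A_{u_k}|(x)$ and assume $r_k\to0$. Choose $q_k$ nearly attaining the supremum. Rescale around $q_k$ by the factor $\lambda_k:=|\mathcal A_{u_k}|(q_k)^{-1}$; step one gives $\ep_k/\lambda_k\to0$. In the rescaled picture we then have a stable $\tilde\ep_k$-A--C solution with $\tilde\ep_k=\ep_k/\lambda_k\to0$ and $|\mathcal A|\le2$ on a ball of radius $\to\infty$. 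Combined with the gradient bound of step one, this is exactly the sheeting assumption on balls of radius $1$ about every point, with $|\mathcal A|=1$ at the origin. We apply \Cref{thm:sheetimpc2alpha} on these unit balls to obtain uniform $C^{2,\vartheta}$ bounds for the sheets, mean curvature $O(\tilde\ep_k)$, and sheet separation $\gtrsim\tilde\ep_k|\log\tilde\ep_k|$. The level sets $\{u=t\}$ therefore converge in $C^{2}_{loc}$, with possibly finitely many sheets by the density bound, to a complete embedded minimal hypersurface in $\R^n$. The $C^2$ convergence is exactly what \Cref{thm:sheetimpc2alpha} buys, and it is needed to pass the curvature normalisation $|{\rm II}|(0)=1$ to the limit. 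The tangential term $|\nabla^T\log|\nabla u||$ in $\mathcal A$ vanishes in the limit by the same estimates, so the normalisation is carried by ${\rm II}$ itself. The limit is smooth, has Euclidean area growth, and is stable: the Sternberg--Zumbrun inequality \Cref{prop:SZstab} passes to the limit as the stability inequality of the limit surface. It is also nonflat, because $|{\rm II}|(0)=1$.

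To reach a contradiction we invoke the stable Bernstein theorem for $n\le7$ with Euclidean area growth, namely Schoen--Simon--Yau and Schoen--Simon. It states that such a hypersurface is a hyperplane, which contradicts $|{\rm II}|(0)=1$. Hence $r_k$ is bounded below, which gives $|\mathcal A_u|\le C_1$ in $B_{1/2}$, uniformly in the regime $\ep\le\delta_0$. The delicate point, and the reason \Cref{thm:sheetimpc2alpha} is indispensable, is that without it the sheets could collapse with multiplicity and only $C^{1,\alpha}$ (varifold) convergence would be available. In that case the curvature normalisation could be lost in the limit.
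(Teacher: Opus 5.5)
Your proposal is correct and follows essentially the same route as the proofs the paper cites for this statement (\cite[Theorem 1.4]{FSstable}, \cite[Corollary 1.3]{WW19}). You first blow up at scale $\ep$ and use \eqref{eq:stableclass} to get the gradient lower bound and $\ep\mathcal A=o(1)$; you then do a point-picking blow-up at intermediate scales, where \Cref{thm:sheetimpc2alpha} gives $C^{2,\vartheta}$ convergence of the sheets to a nonflat stable minimal hypersurface with Euclidean area growth, contradicting Schoen--Simon(--Yau) for $n\le 7$.

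The points you leave implicit are standard. Density bounds at all smaller scales for a solution on a ball need the Hutchinson--Tonegawa almost-monotonicity rather than the entire-solution formula of \Cref{lem:monformula}. Both the vanishing of $|\nabla^T\log|\nabla u||$ and the exact (rather than up-to-a-constant) stability of the limit via \Cref{prop:SZstab} rely on the finer Wang--Wei description $u=g_*+\phi$ across the whole transition layer, not merely on the conclusions of \Cref{thm:sheetimpc2alpha} on $\{|u|\le 0.9\}$.
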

\begin{proof}
    See the proofs of \cite[Theorem 1.4]{FSstable} or \cite[Corollary 1.3]{WW19}.
\end{proof}

\subsection{Wang--Wei in a flat setting}
For convenience, we rewrite \Cref{thm:sheetimpc2alpha} in a flat, large-scale setting.
\begin{theorem}[\cite{WW19} in a flat setting]\label{thm:sheetpart2}
     Let $n\leq 10$. Let $u:B_R\to (-1,1)$ be a stable A--C solution, and assume the sheeting assumptions hold in $B_R$ for some $C_1$. Let $\delta_1>0$ and $\theta\in(0,1)$. Then, there exist $C_2, R_0$ and $c_0>0$, depending on $C_1,\theta$, such that the following hold.\\
    Assume additionally that
    $$\{u=0\}\subset \{|e_n\cdot x|\leq \delta_2 R\}\quad\mbox{in}\quad B_R$$ 
    for some $\delta_2\in(0,c_0)$ and $R\geq R_0$. Then, there is some $N\in\N$ such that:
    \begin{itemize}
        \item There are $C^{\infty}$ graphs $f_i:B_{\frac{3}{5}R}'\to[-R/8,R/8]$, $f_1<...<f_N$, such that $\{u=0\}\cap \mathcal C_{\frac{3}{5}R}=\bigcup_{i=1}^N {\rm graph} \,f_i$.
    \item The estimates \eqref{eq:c2alphasheet}--\eqref{eq:wwsepbnd} hold for the $f_i$.
    \end{itemize}
\end{theorem}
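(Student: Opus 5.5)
The plan is to reduce \Cref{thm:sheetpart2} to \Cref{lem:rjbagbda} and \Cref{thm:sheetimpc2alpha} with $\ep=1$. The flatness hypothesis will be used only to turn the local graph decompositions, each in its own coordinate frame, into one global decomposition over the fixed hyperplane $\{x_n=0\}$. The first step is to get a uniform slope bound in the $e_n$ direction. Take any $p\in\{u=0\}\cap B_{4R/5}$. By the sheeting assumptions, every component of $\{u=0\}$ has curvature at most $C_1/R$. So on a ball $B_{\delta R}(p)$, with $\delta=\delta(C_1,n)$ from \Cref{lem:rjbagbda}, the component through $p$ is a graph with $|Df|\le C_2$ and $|D^2f|\le C_2/R$ over its tangent plane at $p$. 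Such a sheet deviates from that tangent plane by at least $c|\nu(p)-\pm e_n|\,\delta R$ at distance $\delta R$, minus a curvature correction $C\delta^2R$, and it must stay inside the slab $\{|x_n|\le\delta_2R\}$. Hence, if $c_0$ is small relative to $\delta$, the normal $\nu(p)$ satisfies $|\nu(p)\mp e_n|\lesssim \delta_2/\delta+C\delta$. Shrinking $\delta$ first and then $c_0$ makes this smaller than any fixed small angle. Consequently every component, in the frame given by \Cref{lem:rjbagbda}, is a graph over $\{x_n=0\}$ itself with slope bounded by a small constant.

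The second step is to globalize. Inside $\mathcal C_{3R/5}$, each connected component of $\{u=0\}$ is then a connected hypersurface whose normal is everywhere close to $\pm e_n$ and which lies in the thin slab. I project it onto $B'_{3R/5}$ and argue that the projection is open, because locally it is a graph. It is also closed in $B'_{3R/5}$: the sheet cannot escape through the top or bottom of the cylinder, since it is confined to $|x_n|\le\delta_2R<R/8$, and the local graph representations persist up to the lateral boundary because we work inside $B_{4R/5}$ with a margin of $R/5\gg\delta R$. The projection is also injective: two points of the same component with the same $x'$ would force a vertical segment between sheets, and a standard argument along a path in the component, using the small-slope graph property, rules this out. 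So each component is ${\rm graph}\, f_i$ with $f_i:B'_{3R/5}\to[-R/8,R/8]$. Embeddedness then lets us order them $f_1<\dots<f_N$. Finiteness of $N$ follows from the separation estimate \eqref{eq:wwsepbnd}, or alternatively from compactness of $\{u=0\}\cap\overline{B_{4R/5}}$.

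The final step, the estimates, is local. At each point, apply \Cref{thm:sheetimpc2alpha} on balls $B_{cR}(p)\subset B_R$, where $\ep/(cR)\le\delta_0$ once $R\ge R_0$. This gives \eqref{eq:c2alphasheet}--\eqref{eq:wwsepbnd} for the local graphs over the tilted frame. Because the tilt is small and uniformly controlled, these estimates transfer to the $f_i$ over $\{x_n=0\}$ via the standard change-of-graph formulas, with constants changed by a bounded factor. The separation bound \eqref{eq:wwsepbnd} is likewise frame-independent up to a factor $1+o(1)$; to absorb that factor I apply it with a slightly larger $\vartheta$.

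I expect the main obstacle to be the global graphicality and injectivity in the second step. The first step also needs care, namely the quantitative choice of $c_0$ relative to the constant $\delta$ from \Cref{lem:rjbagbda}.
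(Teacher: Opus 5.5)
Your proposal is correct and follows the paper's route: the paper's proof is the one-line reduction to \Cref{lem:rjbagbda} and \Cref{thm:sheetimpc2alpha}, taking $\delta_2$ small enough that all local frames align with $e_n$. You supply the details that proof leaves implicit: tilt control from the slab; the open--closed globalisation as in \Cref{lem:mf3rt7qwg0tba}, where injectivity follows from embeddedness because the ordered sheets over a connected base form clopen pieces; and absorbing the $\log(cR)$ and frame factors in \eqref{eq:wwsepbnd} by slightly enlarging $\vartheta$.
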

\begin{proof}
    Immediate from \Cref{lem:rjbagbda} and \Cref{thm:sheetimpc2alpha}, by making $\delta_2>0$ sufficiently small.
\end{proof}
We also collect some useful additional properties.
\begin{lemma}[Additional properties]\label{lem:addproperties}
    Let $\delta_1>0$. In the setting of \Cref{thm:sheetpart2}, up to making $\delta_2>0$ smaller and $R_0$ larger depending also on $\delta_1$, the following additionally hold:
    \begin{itemize}
    \item The graphs satisfy $\|\nabla f_i\|_{L^\infty}\leq \delta_1$.
    \item There holds $\{|u|\leq 0.9\}\subset \{|e_n\cdot x|\leq \delta_2 R +C_2\}$ in $\mathcal C_{R/2}$.
    \item We have $|{\bf M}_{R/2}(u)-N|\leq \delta_1 N$.
    \end{itemize}
\end{lemma}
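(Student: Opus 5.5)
The plan is to prove the three bullets of \Cref{lem:addproperties} separately, in each case arguing by contradiction and compactness, using the estimates of \Cref{thm:sheetpart2} together with \Cref{lem:expdecay}.

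For the gradient bound, the graphs $f_i$ take values in $[-\delta_2R,\delta_2R]$, since $\{u=0\}\subset\{|x_n|\leq\delta_2R\}$, and by \eqref{eq:c2alphasheet} they satisfy $\|D^2f_i\|_{L^\infty}\leq C_2/R$. Interpolating on $B'_{\frac35R}$ (shrinking the domain slightly if needed, or using $\sup|D f|\lesssim \sup|f|/r + r\sup|D^2f|$ with $r=\sqrt{\delta_2}R$) gives
$$|\nabla f_i|\lesssim \sqrt{\delta_2}+C_2\sqrt{\delta_2}\,,$$
which is below $\delta_1$ once $\delta_2$ is small.

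For the second bullet, I would first note that the sheeting assumption $|\nabla u|\geq 1/C_1$ in $\{|u|\leq 0.9\}$ shows that any point with $|u|\leq0.9$ lies within distance $C(C_1)$ of $\{u=0\}$: follow the gradient flow of $u$ from that point, which moves $u$ towards $0$ at unit speed while $|u|\leq 0.9$, so $\{u=0\}$ is reached in time $\leq C$. This flow stays inside $B_R$ as long as we start in $\mathcal C_{R/2}$ and $R\geq R_0$ is large. Hence $\{|u|\leq0.9\}\cap\mathcal C_{R/2}\subset\{|x_n|\leq\delta_2R+C_2\}$.

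The energy bound is the main step. Rescale to unit scale, $u_R(x)=u(Rx)$, which solves $\ep$-A--C with $\ep=1/R$. Then:
\begin{itemize}
\item Outside $\{|x_n|\leq 2\delta_2+C/R\}$ the energy density is exponentially small by \Cref{lem:expdecay}, so it contributes $o(1)$.
\item Near each graph, in Fermi coordinates, $u$ is close to $g$ of the signed distance to $\mathrm{graph}\,f_i$. This follows from the separation bound \eqref{eq:wwsepbnd}, which gives separation at least $c\log R\to\infty$, together with the standard 1D profile convergence for sheeted solutions: compactness plus the classification of solutions with $|\nabla u|$ bounded below near a flat level set.
\item Each layer therefore contributes energy $\sigma_{n-1}^{-1}\int\sqrt{2W}\cdot\mathcal H^{n-1}(\mathrm{graph}\,f_i\cap B_{R/2})(1+o(1))$.
\end{itemize}
Since the graphs have $|\nabla f_i|\leq\delta_1$ and lie within $\delta_2R$ of the plane $\{x_n=0\}$, each has area $\omega_{n-1}(R/2)^{n-1}(1+O(\delta_1+\delta_2))$, while the layers overlap negligibly. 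Summing over the $N$ layers gives $|{\bf M}_{R/2}-N|\leq\delta_1N$.

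The delicate point is uniformity in $N$: $N$ could a priori be large, so the errors must be per-layer. I would handle this by splitting the slab into the regions between consecutive midpoints $(f_i+f_{i+1})/2$. In each region, $u$ is close to a single 1D profile up to errors of size $e^{-c\log R}$, coming from the separation \eqref{eq:wwsepbnd}. This makes the error relative to $N$, as the statement requires, and it is where I expect the most care to be needed.
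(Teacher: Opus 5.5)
The paper has no in-text argument for this lemma; it only refers to \cite[Lemma 4.9]{FSstable}. Judged on its own, your outline is correct.

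\textbf{Bullets 1 and 2.} Interpolation with $r=\sqrt{\delta_2}R$ gives bullet 1. For bullet 2, use the unit-speed flow $\dot\gamma=-\mathrm{sgn}(u)\nabla u/|\nabla u|^2$. Its path length is at most $0.9C_1$, so it stays in $B_R$ and ends on $\{u=0\}\subset\{|x_n|\le\delta_2R\}$; then take $C_2\ge C_1$. Note that bullet 2 is exactly the hypothesis of \Cref{lem:expdecay}, which is how that lemma enters.

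\textbf{Why per-layer bookkeeping is needed.} You are right that $N$ is not a priori bounded. \Cref{thm:sheetpart2} carries no density bound, and \eqref{eq:wwsepbnd} only gives $N\lesssim \delta_2 R/\log R$.

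\textbf{The step that needs restating.} What must be fixed is the error accounting inside the midpoint regions. A uniform pointwise error of size $R^{-c}$ is not small per unit area of $\Gamma_i$ once integrated across a region whose width can be of order $\delta_2R$. What works is to split the $i$-th region at $|d_i|=L$.
\begin{itemize}
\item On $\{|d_i|\le L\}$, $u$ is $o_R(1)$-close in $C^1$ to $g(\pm d_i)$, uniformly in $i$ and $N$. You can get this from your compactness argument. There the Liouville input is the half-space theorem for Allen--Cahn (Berestycki--Caffarelli--Nirenberg): the unit-scale blow-up limit has zero set exactly one hyperplane, because the neighbouring layers escape by \eqref{eq:wwsepbnd}. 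Alternatively, take it directly from the Wang--Wei approximation $u=g_*+\phi$ with $\phi$ small, as invoked in the proof of \Cref{lem:2ffdec}.
\item On $\{|d_i|>L\}$, use $\tfrac12|\nabla u|^2+W(u)\le Ce^{-c\,\mathrm{dist}(x,\{u=0\})}\le Ce^{-c|d_i|}$ in $B_{R/2}$. This follows from bullet 2 and the barrier argument behind \Cref{lem:expdecay}. It costs $O(e^{-cL})$ times the area of $\Gamma_i\cap B_{R/2}$, independently of how wide the gap is.
\end{itemize}

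\textbf{Conclusion.} The area estimate has relative error $O(\delta_1^2+\delta_2^2)$, since the slope enters quadratically. Combined with the two bounds above, this gives a per-layer relative energy error $O(e^{-cL}+\delta_1^2+\delta_2^2)+o_R(1)$. Choose $L$ first, then $\delta_2$, then $R_0$, and bullet 3 follows. Absorbing the leftover contributions into $\delta_1N$ uses $N\ge1$, which the statement itself implicitly assumes.
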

\begin{proof}
See \cite[Lemma 4.9]{FSstable} for the short proof.
\end{proof}

\section{Annular improvement of flatness for hypersurfaces}\label{sec:annimprovhyper}
This section builds an improvement of flatness result for the PDE \eqref{eq:78gtoqrbg2}. As described in \Cref{sec:overproof}, a general version for minimal hypersurfaces is developed in \cite{FFS25A}; we develop here a related yet different proof strategy, based on an iterative statement on nested exterior domains (see \Cref{prop:mainiteration}).

\subsection{Setting and main result}
To simplify the notation, the general framework we assume in the whole of the present section is the following:
\begin{definition}[Standing assumptions]\label{def:standassumpt} We are given:
 \begin{itemize}
     \item An integer $n\geq 3$.
     \item Positive constants $C_0,R_0$.
     \item A nonempty, embedded hypersurface $\Sigma\subset \R^n\setminus \overline{B_{R_0/2}}$, with $\bar \Sigma \setminus \Sigma\subset \partial B_{R_0/2}$, such that
         \begin{equation}\label{eq:89tqrbnnbs}
        |x||\mathrm{II}_\Sigma|+ |x|^2|\mathrm{H}_\Sigma|\leq C_0\,.
    \end{equation}
 \end{itemize}
\end{definition}
Given $E\subset \R^n$ and $e\in\Sp^{n-1}$, $\mathcal R_e(E)$ will denote the image of $E$ under the rotation bringing $e_n$ to $e$.

Our main result in this section is the following:
\begin{proposition}[Main result]\label{prop:dec}
There exists some $\ep_0>0$, depending on $n,C_0$, such that the following holds. Assume that for every $R\geq 8R_0$ there is some $e_R\in\Sp^{n-1}$ with
$$\Sigma \cap (B_{8R}\setminus \overline{B_{R/8}})\subset \{|e_R\cdot x|\leq \ep_0 R\}\,.$$
Then, there is some $\widetilde R_0\geq 16R_0$, depending on $n,C_0,R_0$, as well as some $e_\infty\in\Sp^{n-1}$, $N\in\N$, and smooth functions $f_i:\R^{n-1}\setminus \overline{B_{\widetilde R_0}}\to\R$ for $i=1,...,N$, such that
$$
\mathcal R_{e_\infty}(\Sigma) \setminus \left(\overline{B_{\widetilde R_0}'}\times[-\widetilde R_0,\widetilde R_0]\right)=\bigcup_{i=1}^N {\rm graph}\, f_i\,.
$$
Additionally, given $\alpha\in(0,1)$, there is $C_\alpha$ depending on $n,C_0,R_0,\alpha$ such that
\begin{equation}\label{eq:tyahtsh9yhw}
     |f_i(x')|+|x'||\nabla f_i|\leq C_\alpha|x'|^\alpha\,.
\end{equation}
\end{proposition}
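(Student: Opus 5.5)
I would argue by an iterative improvement of flatness on dyadic annuli, exploiting that the mean curvature is of order $|x|^{-2}$. That is one power better than the scaling $|x|^{-1}$ of the second fundamental form, so on the annulus $A_R:=B_{8R}\setminus\overline{B_{R/8}}$, rescaled to unit size, $\Sigma$ becomes a hypersurface with $|\mathrm{II}|\le C$ and $|\mathrm H|\le C_0/R$.

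\textbf{Step 1: local graph structure.} By \eqref{eq:89tqrbnnbs} and the flatness hypothesis with $\ep_0$ small, a standard argument as in \Cref{lem:rjbagbda} writes $\Sigma\cap A_R$ as a union of graphs over $e_R^\perp$ with small gradient and $|D^2f|\lesssim 1/R$. The number of sheets is locally constant across overlapping annuli, hence equal to some fixed $N$ for all large $R$. Each sheet $f$ solves $\HH[f]=O(R^{-2})$, which after rescaling reads $\HH[\tilde f]=O(R^{-1})$.

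\textbf{Step 2: one-step improvement (the key proposition).} I claim there are $\theta\in(0,1/8)$ and $C$ with the following property. If on the rescaled annulus all sheets lie in $\{|e\cdot x|\le\delta\}$ with $\delta\le\ep_0$, then on the next annulus (scale $R/\theta$, or better, an outward-nested annulus) there is $e'$ with $|e'-e|\le C\delta$ such that all sheets lie in $\{|e'\cdot x|\le \theta^{1-\alpha}\delta+C/R\}$ in normalized units.

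I would prove this by compactness. The normalized differences $f/\delta$ converge to harmonic functions on the annulus, whose size is forced to be small by the $O(R^{-1})$ error compared with $\delta$. A harmonic function on an annulus in $\R^{n-1}$ expands as a constant, plus a linear part, plus terms from the fundamental solution and higher-order terms. The difficulty is that the annulus is not a ball, so harmonic functions can have the singular parts $|y|^{3-n}$ or $\log|y|$, plus growing harmonics.

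To handle this, I would formulate the iteration on nested \emph{exterior} domains $\R^{n-1}\setminus B_r'$, using the assumed flatness at \emph{all} larger scales simultaneously. A sublinear-growth harmonic function outside a ball equals a constant plus decaying terms (and $\log$ when $n=3$), so after subtracting a constant and a tilt it is small at large radii. Concretely, I would assume at step $k$ flatness $\delta_k=\ep_0\theta^{k\alpha}$ on all annuli outside radius $R_k$ with direction $e_k$. I would then show, by contradiction and compactness of harmonic functions with sublinear (at most $|y|^{\alpha}$-type) growth, that the same holds at step $k+1$ with a new direction. The tilt error $|e_{k+1}-e_k|\lesssim\delta_k$ is summable, so $e_k\to e_\infty$.

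\textbf{Step 3: conclusion.} The flatness bound relative to $e_\infty$ at scale $R$ then decays like $R^{\alpha-1}$ in normalized units, i.e. the heights satisfy $|f_i|\lesssim R^\alpha$. Since all annuli now share the fixed direction $e_\infty$ (the errors with $e_R$ being summable), the local graphs glue to global graphs $f_i$ over $\R^{n-1}\setminus\overline{B'_{\widetilde R_0}}$. Interior estimates on each annulus, applied to the rescaled $f_i$, give $|x'||\nabla f_i|\lesssim|x'|^\alpha$, which is \eqref{eq:tyahtsh9yhw}. Any $\alpha<1$ is allowed because the $C/R$ term only limits the decay to $R^{-1}$ in normalized units.

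\textbf{Main obstacle.} The main obstacle is Step 2: choosing the right compactness class and controlling the non-flat harmonic modes on annuli. In particular, one must ensure the limit harmonic functions have growth less than linear, so that only constants and tilts survive. I would first try encoding the hypothesis as a growth bound $\delta_k(r/R_k)^{\alpha'}$ for all $r\ge R_k$, with $\alpha'>\alpha$ slightly, so that it passes to limits.
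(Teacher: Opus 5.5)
Your architecture matches the paper's: local graphicality with a constant number of sheets, an improvement-of-flatness iteration on dyadic annuli driven by $|x|^2|\mathrm{H}_\Sigma|\le C_0$, summable tilts giving $e_\infty$, and elliptic estimates at the end. However, Step 2, which you flag yourself, has a genuine gap: the inductive statement you propose can neither be started nor propagated.

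The hypothesis gives only one direction $e_R$ per annulus. A priori these may rotate by $\sim\ep_0$ per dyadic scale, with unbounded total rotation, and excluding this (uniqueness of the blow-down) is part of what must be proved. So flatness on all annuli outside $R_k$ with respect to a single $e_k$ is not available at step $0$. Still less is your encoding as a sublinear height bound $\delta_k(r/R_k)^{\alpha'}$ relative to a fixed direction, which is essentially the conclusion itself.

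The growth threshold is also misidentified. If the normalized limits had sublinear growth, no tilt could appear, yet you subtract one. What the data give, after writing a finite block of scales as one graph over $e_{R_k}^\perp$ and summing the per-scale tilts, is a height bound $\lesssim\delta_k\, r\log(r/R_k)$, i.e.\ normalized growth $|y|\log|y|$. This is subquadratic, which is exactly what is needed: it excludes harmonics of degree $\ge 2$ while letting the linear mode (the tilt) through.

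Finally, a contradiction/compactness argument controls only finitely many scales. So ``the same holds at step $k+1$'' on \emph{all} annuli outside $R_{k+1}$ does not follow from one such argument.

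The paper's \Cref{prop:mainiteration} is designed around exactly these issues. It uses per-scale directions, ``best flatness'' $\ep$ on all outer scales, and ``excess decay'' (heights $\lesssim \ep\, 2^{k_1}2^{\alpha(k-k_1)}$) on a finite block of $k_0$ inner scales. Over the block the surface is a single graph with $|f|\lesssim\ep|x'|^\alpha$ inside and $|f|\lesssim\ep|x'|\log|x'|\le\ep|x'|^{2-\alpha}$ outside. \Cref{lincomp} then forces the limit to be exactly linear: the inner decay kills constants and the fundamental solution, and the outer growth kills quadratic modes. The tilt defines $e'_{k_1+1}$. Because the hypotheses at base scale $k_1$ imply those at every $k_1''>k_1$, reapplying the step at each $k_1''$ propagates the improved flatness to all outer scales.

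Your exterior-domain idea can be repaired in the same spirit, and then it even dispenses with the inner block:
\begin{enumerate}
\item Keep per-scale directions.
\item Pass to a harmonic limit on $\{|y|>1\}$ with growth $\lesssim|y|\log|y|$. Then $v_\infty=a\cdot y+b+c\log|y|+(\text{decaying modes})$, with $c=0$ unless $n=3$, and all coefficients controlled by the bound at unit scale.
\item Read off the improvement on the annulus $2^m$ scales further out. There everything except $a\cdot y$ has relative size $O(m2^{-m})\ll 2^{-m(1-\alpha)}$.
\item Apply this one-scale step at every base scale $r\ge R_k$. This yields step $k+1$ on all annuli outside $2^mR_k$.
\end{enumerate}
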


\subsection{Reduction to an iterative result}
We start with a simple lemma that says that having controlled curvatures and small height in a larger annulus implies graphicality over a smaller annulus. We will use this repeatedly.
\begin{lemma}[Graphicality lemma]\label{lem:mf3rt7qwg0tba}
    Let $1<R_1<R_2<R_3$ and $C_0$. Then, there is $\ep_0=\ep_0(R_1,R_2,R_3,C_0,n)>0$, with $R_2+\ep_0< R_3$, such that the following holds.
    
    Assume that $\widetilde\Sigma$ is an embedded hypersurface with $(\overline{\widetilde \Sigma}\setminus \widetilde \Sigma) \cap (B_{R_3}\setminus \overline{B_{R_3^{-1}}})=\emptyset$,
    $$|\mathrm{II}_{\widetilde\Sigma}|\leq C_0\quad\mbox{in}\quad B_{R_3}\setminus \overline{B_{R_3^{-1}}}\,,$$
    and
    $$
    \widetilde\Sigma\cap (B_{R_3}\setminus \overline{B_{R_3^{-1}}})\subset\{|x_n|\leq \ep_0\}\,.
    $$
    Then
    $$\widetilde\Sigma \cap (B_{R_1}\setminus \overline{B_{R_1^{-1}}})\subset (B_{R_2}'\setminus \overline{B_{R_2^{-1}}'})\times[-\ep_0,\ep_0]\,,$$
    and there are some $N\in\N$ and some smooth $f_i:B_{R_2}'\setminus \overline{B_{R_2^{-1}}'}\to [-\ep_0,\ep_0]$ such that
    $$\widetilde\Sigma \cap  \left((B_{R_2}'\overline{\setminus B_{R_2^{-1}}'})\times[-\ep_0,\ep_0]\right)=\bigcup_{i=1}^N {\rm graph}\, f_i\,.$$
\end{lemma}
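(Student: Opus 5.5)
The argument is local: at each point of $\widetilde\Sigma$ in the annulus, the curvature bound forces $\widetilde\Sigma$ to be a graph over its own tangent plane on a ball of fixed radius $r_0=r_0(C_0,n)$, while the height bound forces that tangent plane to be nearly horizontal. We then project onto $\R^{n-1}$ and glue the local graphs into global ones.

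\textbf{Local graphicality.} Since $|\mathrm{II}_{\widetilde\Sigma}|\leq C_0$ and $\widetilde\Sigma$ has no boundary points in $B_{R_3}\setminus\overline{B_{R_3^{-1}}}$, the standard argument applies at every $p\in\widetilde\Sigma$ with $B_{2r_0}(p)$ inside the annulus. The connected component of $\widetilde\Sigma\cap B_{r_0}(p)$ containing $p$ is the graph of a function $u_p$ over $T_p\widetilde\Sigma$ with $|Du_p|\leq 1/10$ and $|D^2u_p|\leq CC_0$.

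\textbf{Tilt from flatness.} Let $\nu(p)$ be a unit normal at $p$ and suppose $|\nu(p)\cdot e_n|\leq 1-\tau$. Then the local graph leaves the slab $\{|x_n|\leq\ep_0\}$ within distance roughly $C\sqrt{\ep_0}$ of $p$, using the $C^2$ control. Choosing $\ep_0$ small therefore gives $|\nu\cdot e_n|\geq 1-\tau$ at every point of $\widetilde\Sigma$ in the slightly smaller annulus $B_{R_3-2r_0}\setminus\overline{B_{2R_3^{-1}}}$. Consequently each local sheet is also a graph over $\R^{n-1}$ with slope at most $C\sqrt\tau$, on a horizontal disc of radius comparable to $r_0$. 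The first inclusion is then immediate: if $|x'|\leq R_2^{-1}$ or $|x'|\geq R_2$, then $|x|$ differs from $|x'|$ by at most $\ep_0$. Taking $\ep_0$ small relative to the gaps $R_2-R_1$ and $R_1^{-1}-R_2^{-1}$ gives the inclusion into $(B_{R_2}'\setminus\overline{B_{R_2^{-1}}'})\times[-\ep_0,\ep_0]$.

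\textbf{Global graphs.} Set $A'=B_{R_2}'\setminus\overline{B_{R_2^{-1}}'}$. The piece $S=\widetilde\Sigma\cap(A'\times[-\ep_0,\ep_0])$ sits well inside the region where the tilt bound holds. The vertical projection $\pi:S\to A'$ is a local diffeomorphism, and it is proper because $S$ is closed in $A'\times[-\ep_0,\ep_0]$ and stays a positive distance from the annulus boundary. So $\pi$ is a finite covering map; finiteness comes from embeddedness together with the local graph separation at scale $r_0$ inside a compact set. Since $n-1\geq 2$, the annulus $A'$ is connected. Over each point the finitely many heights are distinct by embeddedness and can be ordered. The $k$-th height is continuous, since sheets cannot cross by embeddedness and the local graph structure. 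This produces $f_1<\dots<f_N$ with $N$ constant on the connected set $A'$, each locally equal to some $u_p$ and hence smooth. Their ranges lie in $[-\ep_0,\ep_0]$ because of the slab.

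\textbf{Main obstacle.} The delicate step is the properness/covering argument together with the ordering of sheets. This is where both embeddedness and the assumption $(\overline{\widetilde\Sigma}\setminus\widetilde\Sigma)\cap$ annulus $=\emptyset$ are genuinely needed. The condition $R_2+\ep_0<R_3$ is exactly what keeps the cylinder inside the region where the tilt estimate holds.
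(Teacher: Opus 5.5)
Your proof is correct and follows essentially the paper's route: local graphs from the curvature bound, $\ep_0$-flatness forcing the graphing direction to be $e_n$ with small slope, then globalisation via connectedness of the annulus, relative closedness (no boundary points) and embeddedness. Your covering-map formulation with the ordering of heights makes explicit why there is no monodromy around the annulus (relevant when $n=3$), which the paper's ``maximal domain of definition'' argument leaves implicit.

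The only slip is bookkeeping. The tilt estimate holds on $B_{R_3-2r_0}\setminus\overline{B_{R_3^{-1}+2r_0}}$, not on $B_{R_3-2r_0}\setminus\overline{B_{2R_3^{-1}}}$. So $r_0$ must also be small in terms of $R_2^{-1}-R_3^{-1}$ and $R_3-R_2$ for this region to contain $A'\times[-\ep_0,\ep_0]$; the condition $R_2+\ep_0<R_3$ alone does not achieve this. The fix is harmless, since $\ep_0$ is allowed to depend on $R_2,R_3$.
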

\begin{proof}
{\bf Step 1.} Inclusion.
    
We can directly see that $\widetilde\Sigma \cap (B_{R_1}\setminus \overline{B_{R_1^{-1}}})\subset (B_{R_2}'\setminus \overline{B_{R_2^{-1}}'})\times [-\ep_0,\ep_0]$, just by taking
$$\ep_0\leq \min\{\sqrt{R_1^{-2}-R_2^{-2}},\frac{1}{2}(R_3-R_2)\}\,.$$
Indeed, if $x\in \widetilde\Sigma\cap (B_{R_1}\setminus \overline{B_{R_1^{-1}}})$ then $|x|\geq R_1^{-1}$, and moreover $|x_n|\leq \ep_0\leq \sqrt{R_1^{-2}-R_2^{-2}}$ by assumption; we can then compute 
$$|x'|=\sqrt{|x|^2 - |x_n|^2}\geq \sqrt{R_1^{-2}-(\sqrt{R_1^{-2}-R_2^{-2}})^2}=R_2^{-1}\,.$$
{\bf Step 2.} Graphicality.

    Exactly as in \Cref{lem:rjbagbda}, by the second fundamental form bound there is $c_0=c_0(R_1,R_2,R_3,C_0,n)>0$, with $R_2+\ep_0+2c_0< R_3$ and $R_2^{-1}-2c_0> R_3^{-1}$, such that:
    Given $x\in \widetilde\Sigma \cap (B_{R_2}'\setminus \overline{B_{R_2^{-1}}'})\times[-\ep_0,\ep_0]$---so that $B_{2c_0}(x)\subset (B_{R_3}\setminus \overline{B_{R_3^{-1}}})$---we can write all of the components of $\widetilde\Sigma$ intersecting $B_{c_0}(x)$ as a finite union of graphs with respect to some direction $e_x$. Note that the finiteness just comes from the fact that having infinitely many components locally would contradict embeddedness.
    
    Now, by the $\ep_0$-flatness assumption, up to making $\ep_0>0$ small enough depending on $c_0$ we immediately see that we can actually take $e_x=e_n$ above. Moreover, just by interpolation, we can make the graphs have slope less than any $\delta>0$ arbitrary up to making $\ep_0$ smaller.
    
    Take the union over $x\in \widetilde\Sigma \cap (B_{R_2}'\setminus \overline{B_{R_2^{-1}}'})\times[-\ep_0,\ep_0]$ of these graphs. Considering the maximal domains of definition $\Omega_i\subset (B_{R_2}'\setminus \overline{B_{R_2^{-1}}'})$ we obtain associated $f_i:\Omega_i\to [-\ep_0,\ep_0]$ such that
    $$\widetilde\Sigma\cap \left((B_{R_2}'\setminus \overline{B_{R_2^{-1}}'})\times[-\ep_0,\ep_0]\right)=\bigcup_{i=1}^{N}{\rm graph}\,f_i\,.$$
    
    We claim that $\Omega_i=(B_{R_2}'\setminus \overline{B_{R_2^{-1}}'})$ for each $i=1,...,N$. But indeed, this just follows from connectedness of $(B_{R_2}'\setminus \overline{B_{R_2^{-1}}'})$: the maximal set of definition of each $f_i$ in $(B_{R_2}'\setminus \overline{B_{R_2^{-1}}'})$ is both open (by embeddedness of $\Sigma$ and the fact that $|\nabla f_i|\leq \delta$) and closed (since $(\overline\Sigma\setminus \Sigma)\cap (B_{R_3}\setminus \overline{B_{R_3^{-1}}})=\emptyset$).
\end{proof}

We can immediately deduce the following modest but important step, which says that $\Sigma$ cannot ``fold'' back and forth between dyadic scales:
\begin{lemma}[$\ep_0$-flatness implies finite spiral ends]\label{lem:spiralstruc}
    There exists some $\ep_0>0$, depending on $n,C_0$, such that the following holds. Assume that for every $R\geq 8R_0$ there is some $e_R\in\Sp^{n-1}$ with
    $$\Sigma \cap (B_{8R}\setminus \overline{B_{R/8}})\subset \{|e_R\cdot x|\leq \ep_0 R\}\,.$$

    Then, the following hold:
    \begin{itemize}
        \item {\bf Finiteness of ends:} $\Sigma \cap (\R^n \setminus \overline{B_{16R_0}})=\bigcup_{i=1}^N \Sigma_i$ for some $N\in\N$, where the $\Sigma_i$ are \textbf{connected}.
        \item {\bf Spiral structure:} For every $R\geq 16R_0$, $\Sigma_i \cap \partial B_R$ is nonempty. Moreover, $\Sigma_i \cap (B_{2R}\setminus \overline{B_{R/2}})$ is a \textbf{single} graphical sheet, in the sense that:
        \begin{itemize}
            \item $\mathcal R_{e_R}(\Sigma_i) \cap (B_{2R}\setminus \overline{B_{R/2}})\subset (B_{2R}'\setminus \overline{B_{R/4}'})\times [-\ep_0R,\ep_0R]$.
            \item There is some smooth $f_i:B_{2R}'\setminus \overline{B_{R/4}'}\to [-\ep_0R,\ep_0R]$ such that
            $$\mathcal R_{e_R}(\Sigma_i) \cap \left((B_{2R}'\setminus \overline{B_{R/4}'})\times[-\ep_0R,\ep_0R]\right)={\rm graph}\, f_i\,.$$
        \end{itemize}
    \end{itemize}
\end{lemma}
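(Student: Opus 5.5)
The plan is to apply \Cref{lem:mf3rt7qwg0tba} at every dyadic scale after rescaling, and then glue the resulting local graph decompositions across overlapping annuli.

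\textbf{Rescaling and single-scale graphicality.} Fix $R\geq 16R_0$ and consider $\widetilde\Sigma:=R^{-1}\mathcal R_{e_R}^{-1}(\Sigma)$, where $e_R$ is the direction given by the hypothesis at scale $R$ (which applies since $R\geq 8R_0$). By \eqref{eq:89tqrbnnbs}, $|\mathrm{II}_{\widetilde\Sigma}|\leq 8C_0$ in $B_8\setminus\overline{B_{1/8}}$. The hypothesis gives $\widetilde\Sigma\cap(B_8\setminus\overline{B_{1/8}})\subset\{|x_n|\leq\ep_0\}$. The boundary $\bar\Sigma\setminus\Sigma$ lies in $\partial B_{R_0/2}$, so it does not meet this annulus. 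Apply \Cref{lem:mf3rt7qwg0tba} with $R_3=8$, $R_2=4$ and $R_1=2$ (so that $R_2^{-1}=1/4$), with $\ep_0$ chosen as the resulting constant. This gives that $\Sigma\cap(B_{2R}\setminus\overline{B_{R/2}})$, after rotation, lies in $(B_{4R}'\setminus\overline{B'_{R/4}})\times[-\ep_0R,\ep_0R]$. It also gives that $\Sigma$ restricted to that slab is a finite union of graphs $f^R_1<\dots<f^R_{N_R}$ defined over the whole annulus $B'_{4R}\setminus\overline{B'_{R/4}}$, with slopes at most any prescribed $\delta$. The number of sheets is well defined at each scale.

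\textbf{Consistency between scales.} Next I compare scales $R$ and $2R$. On the overlap $B_{4R}\setminus\overline{B_R}$, both descriptions are graph decompositions with small slope over annuli, and $\Sigma$ is nonempty there. I would first show $|e_R\mp e_{2R}|\lesssim\ep_0$. The argument is that an annulus worth of $\Sigma$, namely a full graph over an $(n-1)$-dimensional annulus of radius comparable to $R$, is contained in both slabs. This is only possible if the two normal directions are close, and it uses $n-1\geq 2$ so that the graph spans all directions of the hyperplane. Given this, each sheet of scale $R$ restricted to the overlap region is connected, since a graph over a connected annulus is connected. It therefore coincides with exactly one sheet at scale $2R$, and the number of sheets agrees: $N_R=N_{2R}=:N$. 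Ordering is preserved, which defines the $\Sigma_i$ for $i=1,\dots,N$ as the union over dyadic $R$ of the $i$-th sheets. Each $\Sigma_i$ is connected, as a chain of connected pieces with nonempty consecutive overlaps. Each $\Sigma_i$ meets every $\partial B_R$, because the graph over an annulus crosses every sphere in between.

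\textbf{Covering and boundary issues.} It remains to check that $\Sigma\cap(\R^n\setminus\overline{B_{16R_0}})$ is exactly $\bigcup_i\Sigma_i$. This holds because every point lies in some annulus $B_{2R}\setminus\overline{B_{R/2}}$ with $R\geq 16R_0$, and there $\Sigma$ is entirely covered by the sheets. The stated inclusion into $(B_{2R}'\setminus\overline{B_{R/4}'})\times[-\ep_0R,\ep_0R]$ then follows from Step 1 of the graphicality lemma with $R_1=2$. The graph domain $B_{2R}'\setminus\overline{B'_{R/4}}$ is obtained by restricting the graphs given over $B'_{4R}\setminus\overline{B'_{R/4}}$.

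\textbf{Main obstacle.} The delicate step is the matching of sheets across neighbouring scales when the directions $e_R$ differ slightly. Sheets that are well-separated at one scale could a priori merge or reorder at the next. I would handle this by shrinking $\ep_0$ so that the tilt between $e_R$ and $e_{2R}$ is at most $C\ep_0$. Then a small-slope graph in one frame is also a small-slope graph in the other frame over a slightly smaller annulus, and the decomposition is unique by embeddedness: sheets are the connected components of $\Sigma$ in the slab. Hence the sheet count is intrinsic as the number of connected components of $\Sigma\cap(B_{2R}\setminus\overline{B_{R}})$, which avoids any frame-dependence.
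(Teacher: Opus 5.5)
Your proposal is correct and follows the paper's argument. You apply the graphicality lemma at every scale with $(R_1,R_2,R_3)=(2,4,8)$, then match sheet decompositions on overlapping annuli so that $N_R$ is constant and the local sheets assemble into connected global pieces $\Sigma_i$. The differences are minor: the paper proves $\Sigma\cap\partial B_R\neq\emptyset$ for all $R\geq 16R_0$ by a separate infimum/supremum argument, which you instead read off from the constancy of $N$. Your tilt bound $|e_R\mp e_{2R}|\lesssim\ep_0$ is also not needed, since, as you observe yourself, the sheets are intrinsically the connected components of $\Sigma$ in the overlap.
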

\begin{proof}
    We directly see that
    $$\mathcal R_{e_R}(\Sigma) \cap (B_{2R}\setminus \overline{B_{R/2}})\subset (B_{2R}'\setminus \overline{B_{R/4}'})\times [-\ep_0R,\ep_0R]\,,$$
    just by considering $\widetilde \Sigma_R:=\frac{1}{R}\mathcal R_{e_R}(\Sigma)$, applying \Cref{lem:mf3rt7qwg0tba} (for instance with $R_1=2$, $R_2=4$, $R_3=8$, and using \eqref{eq:89tqrbnnbs} to bound $|\mathrm{II}_{\widetilde \Sigma_R}|\leq 8C_0$ in $B_8\setminus B_{1/8}$), and then scaling back.

    Moreover---assuming that $\Sigma \cap (B_{2R}\setminus \overline{B_{R/2}})\neq \emptyset$---this also gives some $N_R\in\N$, which is finite but which \textit{may depend on $R$} at this point, and smooth $f_i:B_{2R}'\setminus \overline{B_{R/4}'}\to [-\ep_0 R,\ep_0R]$ for every $i=1,..,N_R$, such that
    $$\mathcal R_{e_R}(\Sigma) \cap \left((B_{2R}'\setminus \overline{B_{R/4}'})\times[-\ep_0R,\ep_0R]\right)=\bigcup_{i=1}^{N_R} {\rm graph}\, f_i\,.$$

    It remains to connect the different scales. If $\Sigma\setminus \overline{B_{16R_0}}$ were empty the Lemma is trivially true; assume therefore that $\Sigma\setminus \overline{B_{16R_0}}\neq\emptyset$, so that in particular there is some $\bar R\geq 16 R_0$ such that $\Sigma\cap \partial B_{\bar R}\neq \emptyset$. We can now see that in fact $\Sigma\cap \partial B_R$ is nonempty for \textit{every} $R\geq 16R_0$:
    
    Assume for contradiction that this failed for some $R>\bar R$, and let $R^*$ be the infimum among such radii. Since $\bar \Sigma\setminus\Sigma\subset \partial B_{R_0/2}$, we see that $\Sigma\cap \partial B_{R^*}\neq\emptyset$. By our argument above,
    $$\Sigma\cap \partial B_{R^*}\subset \Sigma\cap(B_{2R_*}\setminus \overline{B_{R^*/2}})\subset (B_{2R^*}'\setminus \overline{B_{R^*/4}'})\times[-2\ep_0R^*,2\ep_0R^*]\,,$$
    and we can write
    $$\mathcal R_{e_{R^*}}\Sigma\cap \left((B_{2R^*}'\setminus \overline{B_{R^*/4}'})\times[-2\ep_0R^*,2\ep_0R^*]\right)$$ as a union of graphs over $B_{2R^*}'\setminus \overline{B_{R^*/4}'}$. But such graphs are connected, and they trivially also contain some $x\in \Sigma$ with $|x|\geq 2R^*$. This shows that $\Sigma\cap \partial B_R\neq\emptyset$ for all $R\in[R^*,2R^*]$, contradiction the definition of $R^*$ as an infimum. The same proof works---up to making $\ep_0$ small enough---for $16 R_0\leq R<\bar R$, up to considering a supremum instead.\\

    Finally, observe that we naturally must have \textit{matching decompositions} on any pair of overlapping, local graphical decompositions, which allows us to identify different sheets across scales; this shows that the $N_R$ above is continuous in $R$, and therefore constant and equal to some $N\in\N$. We can then identify each of the local graphs as pieces of some globally defined connected components $\{\Sigma_i\}_{i=1}^N$. The two bullets in the lemma follow directly from our previous discussion.
\end{proof}
For simplicity, we \textit{assume that $\Sigma$ is connected} from now on, which we can do by \Cref{lem:spiralstruc} up to considering one of the $\Sigma_i$ and $16R_0$ instead of $\Sigma$ and $R_0$.

We do not yet know the global graphicality of $\Sigma$ away from a ball, as the $e_R$ could rotate for now; we therefore work with extrinsic statements until the end. \Cref{prop:dec} will follow in a straightforward manner from the following:
\begin{proposition}[$\ep_0$-flatness implies decay]\label{prop:dec2}
Given $\alpha\in(0,1)$, there exists some $\ep_0>0$, depending on $n,C_0,R_0,\alpha$, such that the following holds. Assume that for every $k\in\N$ with $2^{k+1}\geq R_0$ there is some $e_k\in\Sp^{n-1}$ with
$$\Sigma \cap (B_{2^{k+3}}\setminus \overline{B_{2^{k-3}}})\subset \{|e_k\cdot x|\leq \ep_0 2^k\}\,.$$
Then, there are $C>0$ and some $\widetilde e_k\in\Sp^{n-1}$ for every $k\in\N$ such that
\begin{align*}
    \Sigma \cap (B_{2^{k+3}}\setminus \overline{B_{2^{k-3}}})&\subset \{|\widetilde e_k\cdot x|\leq C\ep_0 2^{\alpha k}\}\,.
\end{align*}
\end{proposition}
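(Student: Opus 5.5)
We prove \Cref{prop:dec2} with a dyadic improvement-of-flatness argument, run by compactness and starting from the outermost scales of a finite range. Set $a_k:=\min_{e\in\Sp^{n-1}}\sup\{2^{-k}|e\cdot x| : x\in\Sigma\cap(B_{2^{k+3}}\setminus\overline{B_{2^{k-3}}})\}$, the normalised flatness at scale $2^k$; by hypothesis $a_k\le\ep_0$. The aim is to show that the angle between optimal normals at consecutive scales is at most $Ca_k$, and that the heights satisfy a recursive bound. Summing the tilts then gives $2^k a_k\lesssim \ep_0 2^{\alpha k}$ for the single direction $\widetilde e_k$.

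First we rescale $\widetilde\Sigma_k:=2^{-k}\mathcal R_{e_k}^{-1}(\Sigma)$. By \eqref{eq:89tqrbnnbs} it has $|\mathrm{II}|\le 64C_0$ and $|\mathrm H|\le C_0 2^{-k}\cdot 64$ on $B_8\setminus B_{1/8}$. By \Cref{lem:mf3rt7qwg0tba} and the connectedness from \Cref{lem:spiralstruc}, it is a single graph $u_k$ over an annulus $A=B_2'\setminus\overline{B_{1/4}'}$ with $|u_k|\le a_k$ and small slope. The graph satisfies $\HH[u_k]=h_k$ with $|h_k|\le C2^{-k}$. Dividing by $a_k$, the function $v_k=u_k/a_k$ solves $\mathrm{div}(A(\nabla u_k)\nabla v_k)=h_k/a_k$, a uniformly elliptic equation with coefficients close to identity.

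The key one-step lemma has two alternatives. Either $a_k\le 2^{-k}\cdot M$ for a large constant $M$, so the forcing dominates and the height at that scale is already $\lesssim 1$ in absolute terms. Or else $|h_k/a_k|\le 1/M$, and by compactness $v_k$ is close in $C^{1}$ on slightly smaller annuli to a harmonic function $v$ on $A$ with $|v|\le1$. The obstruction is that a harmonic function on an annulus need not be linear: its expansion contains $\log|y|$ or $|y|^{3-n}$ terms and decaying modes, besides the linear and constant parts. The lemma therefore has to work across several overlapping annuli rather than in one. The idea I would pursue is to compare with the next scale: since the graph at scale $2^{k+1}$ is also flat with flatness $a_{k+1}$, the normalised limits glue on $B_{16}'\setminus B_{1/4}'$. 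Iterating over $L$ consecutive scales and passing to the limit gives a harmonic function on $B'_{2^{L}}\setminus B'_{1/4}$ with bounded growth at rate controlled by the flatness sequence. Modes that grow faster than linearly are forbidden by the $\ep_0$-smallness at all outer scales, and the decaying modes are $O(|y|^{2-n})$ at scale $1$. Thus, after removing a tilt $\ell$ (with $|\ell|\lesssim$ the flatness) and a constant, the remainder at the inner scale is $\le 2^{-(1-\alpha)}\times$ (the flatness at the outer scale). This yields the recursion $a_k\le 2^{\alpha-1}\max(a_{k+1},\,\cdot)+C2^{-k}$, or equivalently in unnormalised heights $h(k)=2^ka_k$, $h(k)\le 2^{\alpha}h(k+1)\cdot 2^{-1}\cdots$. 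I would instead orient it as a growth estimate, $h(k+1)\le 2^{\alpha}h(k)+C$, which integrates to $h(k)\le C\ep_0 2^{\alpha k}$. Here the constant and log modes are absorbed by $\alpha>0$: $\log$ growth is $\le C_\alpha 2^{\alpha k}$, and for $n\ge4$ the $|y|^{3-n}$ mode is even bounded.

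The tilts must be handled consistently. Each step changes the direction by $O(a_k)$, and since $a_k\lesssim\ep_0 2^{(\alpha-1)k}$ is summable, the directions $\widetilde e_k$ converge. Taking a common direction at the cost of $\sum_{j\ge k}a_j\lesssim \ep_0 2^{(\alpha-1)k}$ is consistent with the claimed bound. The main obstacle is making the compactness step honest on annuli, not balls. Concretely, one has to exclude, uniformly, harmonic limits on $B_{2^L}'\setminus B'_{2^{-L}}$ that are bounded by $1$ yet do not improve by a factor $2^{\alpha-1}$ after subtracting affine and fundamental-solution terms. I would try first a contradiction argument over a fixed number $L=L(\alpha,n)$ of scales, using the Laurent-type expansion of harmonic functions on annuli in spherical harmonics.
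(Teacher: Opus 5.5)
Your ingredients (graphicality, harmonic approximation on annuli, summing tilts) are the right ones. However, the one-step improvement cannot close as you set it up, because you have no outer growth bound at the level of the current normalisation. Once the relative flatness at scale $2^k$ has improved to some $\ep\ll\ep_0$, the outer hypothesis $|e_j\cdot x|\le\ep_0 2^j$ gives, after normalising by $\ep 2^k$ and accounting for tilts, only $|v|\lesssim(\ep_0/\ep)\,\rho\log\rho$. Here $\ep_0/\ep\to\infty$ along the iteration. A unit-size quadratic (or higher) spherical-harmonic mode is compatible with this over about $\log_2(\ep_0/\ep)$ scales. So no contradiction argument over a fixed number $L$ of scales can exclude it, and \emph{$\ep_0$-smallness at all outer scales} does not forbid super-linear modes in the limit.

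\Cref{prop:mainiteration} is built to supply exactly this missing bound. It assumes best flatness, with the same $\ep$ used for normalising, at every scale $k\ge k_1+1$. It then re-establishes the improved flatness $\delta\ep$ at all outer scales at once, by running the one-step argument at every base scale $k_1''\ge k_1+1$; this is allowed because the hypotheses are monotone under shifting the base outward. The invariant $\ep\ge 2^{(\alpha-1)k_1}$ keeps the $O(|x|^{-2})$ mean-curvature forcing negligible throughout. Summing tilts then gives the normalised outer bound $\rho\log\rho\le\rho^{2-\alpha}$, which kills all modes of degree at least $2$.

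The inner side is also mis-specified, and the recursion runs in the wrong direction. Your stated one-step conclusion is that, after removing a tilt and a constant, the remainder at the inner scale is at most $2^{\alpha-1}$ times the outer flatness. That is the inward recursion $a_k\le 2^{\alpha-1}a_{k+1}$.
\begin{itemize}
\item It is false for exterior problems: already a logarithmic end has $a_k/a_{k+1}\to 2$.
\item In heights it reads $h(k+1)\ge 2^{2-\alpha}h(k)$, so it does not give the outward estimate $h(k+1)\le 2^{\alpha}h(k)+C$ that you then integrate.
\item On $B'_{2^L}\setminus B'_{1/4}$ the negative-degree modes are of unit size at the inner annulus, so nothing can improve there.
\item Subtracting a fresh constant and fundamental-solution term at each scale would require controlling how those coefficients drift across scales, which you do not do. The statement also needs hyperplanes through the origin.
\end{itemize}
The paper avoids all of this with the excess-decay hypothesis on the inner block $[k_1-k_0,k_1]$. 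The already-established growth $2^{\alpha k}$ at inner scales gives the normalised bound $|v|\le\rho^{\alpha}$ for $\rho\le 1$ in \Cref{lincomp}. In the Liouville limit this forces the constant, the $\log|x|$ or $|x|^{3-n}$ term, and every decaying mode to vanish. The limit is therefore exactly linear, only the tilt $e'_{k_1+1}$ has to be updated, and the improvement propagates outward one scale at a time.
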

Now, to obtain this result, we will in turn reduce it to another extrinsic statement, which is the technical core of the proof: an ``improvement of flatness''-type iteration. Finding the right statement is rather tricky, as one needs to update different information about different scales at each step.
\begin{proposition}[Main iteration]\label{prop:mainiteration}
    Given $\alpha\in(0,1)$, there are $k_0,K\in\N$, and $\ep_0>0$, all depending on $n$ and $\alpha$ such that the following holds. Set $\delta=2^{\alpha-1}$. Assume we are given:
    \begin{itemize}
        \item Some base scale $k_1\in\N$, with $k_1\geq K$.
        \item Some ``best flatness'' $\ep>0$, with $2^{(\alpha-1) k_1}\leq \ep\leq \ep_0$.
        \item Some vectors $e_k\in \Sp^{n-1}$ for all $k\geq k_1-k_0$, such that the following hold:
\begin{itemize}
    \item Excess decay between $k_1-k_0$ and $k_1$:
    $$|x\cdot e_{k}|\leq \delta^{k-k_1}\ep 2^{k} \mbox{ for all } x\in\Sigma\cap(B_{2^{k+1}}\setminus \overline{B_{2^{k-1}}}),\,\quad k_1-k_0\leq k \leq k_1\,.$$
    \item Preservation of best flatness: $$|x\cdot e_{k}|\leq \ep 2^{k} \mbox{ for all } x\in\Sigma\cap(B_{2^{k+1}}\setminus \overline{B_{2^{k-1}}}),\,\quad k_1+1\leq k \,.$$
\end{itemize}
    \end{itemize}
Then, we have the following:
    \begin{itemize}
        \item Set the new base scale $k_1':=k_1+1\in\N$, which satisfies $k_1'\geq K$.
        \item Set the new ``best flatness'' $\ep'=\delta\ep>0$, which satisfies $2^{(\alpha-1) k_1'}\leq \ep'\leq \ep_0$.
        \item Then, there are some new vectors $e_k'\in \Sp^{n-1}$ for all $k\geq k_1'-k_0$, such that the following hold:
\begin{itemize}
    \item Excess decay between $k_1'-k_0$ and $k_1'$:
    $$|x\cdot e_{k}'|\leq \delta^{k-k_1'}\ep' 2^{k} \mbox{ for all } x\in\Sigma\cap(B_{2^{k+1}}\setminus \overline{B_{2^{k-1}}}),\,\quad k_1'-k_0\leq k \leq k_1'\,.$$
    \item Preservation of best flatness: $$|x\cdot e_{k}'|\leq \ep' 2^{k} \mbox{ for all } x\in\Sigma\cap(B_{2^{k+1}}\setminus \overline{B_{2^{k-1}}}),\,\quad k_1'+1\leq k \,.$$
\end{itemize}
    \end{itemize}
\end{proposition}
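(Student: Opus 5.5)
The plan is a compactness (blow-up) argument at the annulus of the new base scale $k_1'=k_1+1$, followed by an explicit update of the vectors. The hypotheses already give, at every scale $k\ge k_1+1$, flatness $\ep 2^k$. The first reduction is cosmetic: for $k\ge k_1'+1$ the required bound is $\ep'2^k=\delta\ep 2^k$, strictly better than what we have. So one cannot simply keep $e_k'=e_k$. Instead, the idea is to prove improved flatness $\delta\ep 2^k$ at every scale $k\ge k_1$ (relative to a suitably tilted vector), using the fact that $\ep$ was \emph{best} flatness at all scales beyond $k_1$. Below $k_1$ we then inherit excess decay from the old hypothesis, shifted by one index. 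Concretely, I would set $e_k'=e_k$ for $k_1'-k_0\le k\le k_1-1$ and check that $\delta^{k-k_1}\ep=\delta^{k-k_1'}\ep'$. This is an identity, so those scales are free. It remains to produce $e_k'$ for $k\ge k_1$ with $|x\cdot e_k'|\le \delta\ep 2^k$ on the $k$-th annulus; at $k=k_1'$ this is exactly the required excess decay with exponent $0$.

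The key step is an improvement-of-flatness lemma on a long range of annuli. I would argue by contradiction: take sequences $\Sigma_j$, $k_{1,j}$, $\ep_j\to0$ (with $k_{1,j}\ge K_j\to\infty$) where the conclusion fails at some scale $k_j\ge k_{1,j}$. Rescale by $2^{-k_j}$, rotate by $e_{k_j}$, and apply \Cref{lem:mf3rt7qwg0tba} on each dyadic annulus (curvature $\le 8C_0$ after scaling by \eqref{eq:89tqrbnnbs}). Connectedness and \Cref{lem:spiralstruc} then make $\Sigma_j$ a single graph $u_j$ over annuli in a range of scales around $k_j$. Adjacent vectors $e_k,e_{k+1}$ differ by $O(\ep)$, since the annuli overlap and the surface is not contained in a thin slab around a lower-dimensional set. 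Hence, over $\ell$ dyadic scales, $|u_j|\lesssim \ep_j 2^{k}\cdot(\text{growth})$. Normalising $v_j=u_j/\ep_j$, we get:
\begin{itemize}
\item[(a)] $|v_j|\le C|x'|$-type bounds on all scales above (from best flatness, with tilt accumulating at most linearly in the number of scales, i.e.\ $|v_j(x')|\lesssim |x'|\,(1+|\log|x'||)$), together with the decay $|v_j|\le C\delta^{-m}2^{-m}$ at the $m$-th scale below (from excess decay);
\item[(b)] $|\HH[\ep_j v_j]|\le C_0 2^{-k_j}\ep_j^{-1}\cdot$, and the mean curvature bound $|x|^2|\mathrm H|\le C_0$ gives $|\Delta v_j|\lesssim 2^{-k_j}/\ep_j\le 2^{-\alpha k_{1,j}}\to0$, using $\ep\ge 2^{(\alpha-1)k_1}$. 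This is exactly where the lower bound on $\ep$ enters.
\end{itemize}
Passing to the limit (with $k_0$, $K$ large), $v$ is harmonic on $\R^{n-1}\setminus\{0\}$ with $|v|\lesssim |x'|^{1+}$ at infinity and $|v|\lesssim |x'|^{\log_2(1/(2\delta))}=|x'|^{1-\alpha}\cdot$ (decay) near $0$. Since $n-1\ge2$ and the growth near $0$ beats the fundamental solution, the singularity is removable. Growth at infinity then forces $v$ to be linear, $v=a\cdot x'$. Tilting by $a$, that is replacing $e_k$ by the normalisation of $e_k-\ep_j a$, gives $|v-a\cdot x'|\to0$ uniformly on the relevant annuli. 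This contradicts the failure of the $\delta\ep$ bound, provided $\delta=2^{\alpha-1}>1/2$ leaves room. With $k_0$ fixed in terms of $\alpha$, the same tilt works for all scales in a window.

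The main obstacle is the \emph{unbounded} range $k\ge k_1$: compactness only controls finitely many scales around a given one, whereas the claim needs improvement at all large scales simultaneously. There the tilt produced by the blow-up must be chosen scale by scale. Lack of uniformity there is acceptable, because best flatness is only required per scale with its own $e_k'$. So I would run the contradiction argument at an arbitrary scale $k\ge k_1$, using only the scales within $k_0$ below it, where excess decay, or best flatness when $k-k_0>k_1$, holds with improved factor, together with best flatness above. The delicate point is the case $k\gg k_1$, where scales below have only flatness $\ep$, not decay. There the limit $v$ is merely harmonic with $|v|\lesssim|x'|$ on a long annulus. I would handle it by noting that on annuli the harmonic function decomposes into $c_0+c\,|x'|^{3-n}$ (or $\log$), a linear part, and higher/lower modes. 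The constant and fundamental-solution modes are relatively of order $2^{-k}$ scale-wise, and can be dismissed only if they come with vanishing coefficient. I expect the bookkeeping of these modes to be the real difficulty.
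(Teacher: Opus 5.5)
Your architecture matches the paper's. You normalise the graph by $\ep$ and use excess decay below and best flatness above. You kill the mean curvature term via $2^{-k}/\ep\le 2^{-\alpha k_1}$, pass to a harmonic limit on $\R^{n-1}\setminus\{0\}$ that must be linear, and tilt. You also keep $e'_k=e_k$ below, thanks to the identity $\delta^{k-k_1}\ep=\delta^{k-k_1'}\ep'$. The gap is the step you leave open and call the real difficulty, the scales $k\gg k_1$; your diagnosis there is backwards, and as written the proposal does not close the argument.

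Relative to a base scale $k$, excess decay only requires $|x\cdot e_{k'}|\le\delta^{k'-k}\ep 2^{k'}$, and $\delta^{k'-k}\ge 1$ for $k'\le k$. So best flatness $\ep 2^{k'}$ at the intermediate scales $k_1<k'\le k$ is \emph{stronger} than what is needed. For $k'\le k_1$ one has $\delta^{k'-k_1}\le\delta^{k'-k}$, and also $2^{(\alpha-1)k}\le 2^{(\alpha-1)k_1}\le\ep$. Hence the hypotheses at base $k_1$ imply the hypotheses at every base $k\ge k_1$, with the same $\ep$ and the same $e_{k'}$. This is the paper's Step 4: rerun the one-scale improvement at base $k$ to get $e'_{k+1}$ with $|x\cdot e'_{k+1}|\le\delta\ep 2^{k+1}$, for every $k\ge k_1+1$.

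In your blow-up language, when $k\gg k_1$ the normalised graphs satisfy $|v|\lesssim|x'|(1+|\log|x'||)\lesssim|x'|^{\alpha}$ near $0$. That is at least as good as the bound available at $k=k_1$. The limit is therefore bounded near the origin and vanishes there. This excludes outright the constant mode and the fundamental solution ($|x'|^{3-n}$, or $\log|x'|$ when $n=3$), so no mode bookkeeping is needed.

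For this you do need the window of scales to grow along the contradiction sequence. Equivalently, use the quantitative \Cref{lincomp} with $\delta_0\to 0$, and then fix $k_0$ in terms of $\delta_0$. On a fixed finite annulus no such classification is available.

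Your exponent is also off: $\delta^{-m}2^{-m}=(2\delta)^{-m}=2^{-\alpha m}$. So the bound at $|x'|\sim 2^{-m}$ is $|x'|^{\alpha}$, not $|x'|^{1-\alpha}$. The positivity of this exponent, i.e.\ $\delta>1/2$, is exactly what makes the limit vanish at $0$. The constant and logarithmic modes you worry about genuinely arise only in the catenoidal variant \Cref{lincomp2D}, where the small-scale bound is a negative power.
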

\begin{remark}
    In words, every time we have a large block of scales where flatness decays, and assuming moreover that the best flatness then remains for all scales up to infinity, then flatness decays up to an extra scale, and moreover this improved flatness also propagates to all further scales up to infinity. In particular, if the assumptions of \Cref{prop:mainiteration} hold for some initial flatness and scale, then it can actually be iterated as many times as wanted.
\end{remark}
\Cref{prop:mainiteration} will be proved in the next section. For now, we show how \Cref{prop:dec2} is implied by \Cref{prop:mainiteration}, and how \Cref{prop:dec} is implied by \Cref{prop:dec2}.
\begin{proof}[Proof of \Cref{prop:dec2}, assuming \Cref{prop:mainiteration}]
    Fix $\alpha\in (0,1)$. Let $k_0,K\in\N$ and $\ep_0>0$ be given by \Cref{prop:mainiteration}, and set $\delta=2^{\alpha-1}$. By assumption (in \Cref{prop:dec2}), we have
    \begin{equation}\label{eq:67ehjkbgavb}
        \Sigma \cap (B_{2^{k+3}}\setminus \overline{B_{2^{k-3}}})\subset \{|e_k\cdot x|\leq \ep_0 2^k\}\quad\mbox{for every}\quad k\geq \lceil \log_2{R_0}\rceil\,.
    \end{equation}
    It is clear now how to proceed:
    \begin{itemize}
        \item Set $k_1:=\max\{K,k_0+\lceil \log_2{R_0}\rceil\}$, so that $k_1\geq K$.
        \item Up to making $k_1$ larger, we also have $2^{(\alpha-1)k_1}\leq \ep_0$.
        \item Since (by definition) $k_1-k_0\geq \lceil \log_2{R_0}\rceil$, by \eqref{eq:67ehjkbgavb} we have
        $$|x\cdot e_k|\leq \ep_02^k\quad \mbox{for all}\quad x\in\Sigma\cap(B_{2^{k+1}}\setminus \overline{B_{2^{k-1}}})\quad\mbox{and}\quad k\geq k_1-k_0\,.$$
        \begin{itemize}
            \item Since this includes the range $k_1+1\leq k$, we have the ``preservation of best flatness'' condition.
            \item Moreover, since this includes the range $k_1-k_0\leq k\leq k_1$, we obviously also have the ``excess decay'' condition, i.e. $|x\cdot e_k|\leq \delta^{k-k_1}\ep_02^k$, simply since $\delta^{k-k_1}\geq 1$ in this range.
        \end{itemize}
        
    \end{itemize}
    This means that the hypotheses of \Cref{prop:mainiteration} are satisfied. Iterating the proposition, setting $k_{1,l}:=k_1+l$ and $\ep_l:=\delta^l \ep_0$, $l\in\N$, the ``preservation of best flatness'' part gives---at the $l$-th iteration---a vector $\widetilde e_{k_{1,l}}$ such that
    $$|x\cdot \widetilde e_{k_{1,l}}|\leq \ep_l2^{k_{1,l}}= \delta^l \ep_0 2^{k_{1,l}}\quad\mbox{for all}\quad x\in \Sigma\cap (B_{2^{k_{1,l}+1}}\setminus \overline{B_{2^{k_{1,l}-1}}})\,.$$
    Letting $C:=\delta^{-k_1}\ep_0$, which does not depend on $l$, we can rewrite this as
    $$|x\cdot \widetilde e_{k_{1,l}}|\leq  C\delta^{k_{1,l}} 2^{k_{1,l}}\quad\mbox{for all}\quad x\in \Sigma\cap (B_{2^{k_{1,l}+1}}\setminus \overline{B_{2^{k_{1,l}-1}}})\,,$$
    or (since $\delta=2^{\alpha-1}$)
    $$|x\cdot \widetilde e_{k_{1,l}}|\leq  C 2^{\alpha k_{1,l}}\quad\mbox{for all}\quad x\in \Sigma\cap (B_{2^{k_{1,l}+1}}\setminus \overline{B_{2^{k_{1,l}-1}}})\,,$$
    which clearly proves \Cref{prop:dec2}.
\end{proof}
\begin{proof}[Proof of \Cref{prop:dec}, assuming \Cref{prop:dec2}]
    By assumption, there are $\widetilde e_k\in\Sp^{n-1}$ such that for every $k\in\N$ with $2^k\geq 8R_0$, we have
\begin{align}\label{eq:37y4tgweeub}
    \Sigma \cap (B_{2^{k+3}}\setminus \overline{B_{2^{k-3}}})&\subset \{|\widetilde e_k\cdot x|\leq C\ep_0 2^{\alpha k}\}\,.
\end{align}

Recall that, by \Cref{lem:spiralstruc}, up to arguing with each $\Sigma_i$ separately it suffices to prove \Cref{prop:dec} in the case in which $\Sigma$ is a single connected piece, i.e. satisfying the second bullet there.

A remark is in order: To be precise, this will show the existence of some $e_{\infty,i}$ satisfying the thesis of \Cref{prop:dec} with $\Sigma_i$ in place of $\Sigma$, with the only caveat that $e_{\infty,i}$ could a priori depend on $i$. However, we then immediately see that the $e_{\infty,i}$ need in fact to be all equal just by embeddedness.\\

\noindent {\bf Step 1.} Summing the coefficients.

Applying \eqref{eq:37y4tgweeub} with $k$ and $k+1$ we deduce in particular that
\begin{align}\label{eq:t2831t41gtq}
    \Sigma \cap (B_{2^{k+1}}\setminus \overline{B_{2^{k-1}}})&\subset \{|\widetilde e_k\cdot x|\leq C\ep_0 2^{\alpha k}\}\cap \{|\widetilde e_{k+1}\cdot x|\leq C\ep_0 2^{\alpha (k+1)}\}\,.
\end{align}
Then,  the local graphicality (coming from \Cref{lem:spiralstruc}) of $\Sigma$ and the triangle inequality easily give that
$$|\widetilde e_{k+1}-\widetilde e_k|\leq \frac{C 2^{\alpha k}}{2^k}=C2^{(\alpha-1)k}\,.$$
Then, given $k_0\in \N$ with $2^{k_0}\geq 8R_0$ we find that
$$\sum_{k\geq k_0} |\widetilde e_{k+1}-\widetilde e_k|\leq \sum_{k\geq k_0} C2^{(\alpha-1)k} \leq C_\alpha 2^{(\alpha-1)k_0}\,,$$
and in particular the sequence $\widetilde e_k$ is Cauchy. This shows the existence of some $ e_\infty\in \Sp^{n-1}$ with
\begin{equation}\label{eq:q7toagvg}
    | e_\infty-\widetilde e_{k_0}|\leq \sum_{k\geq k_0} |\widetilde e_{k+1}-\widetilde e_k|\leq C_\alpha 2^{(\alpha-1)k_0}\,,\quad\mbox{thus}\quad \Sigma \cap (B_{2^{k+3}}\setminus \overline{B_{2^{k-3}}})\subset \{| e_\infty\cdot x|\leq CC_\alpha 2^{\alpha k}\}\,.
\end{equation}
In other words, $\{e_\infty\cdot x=0\}$ is the (unique) blow-down tangent of $\Sigma$.\\

\noindent {\bf Step 2.} Graphicality.

Thanks to \eqref{eq:q7toagvg}, we can now just apply \Cref{lem:spiralstruc} but with the constant direction $\widetilde e_\infty$ (in place of the $e_R$). This shows that $\mathcal R_{e_\infty}(\Sigma)\setminus \left(\overline{B_{\widetilde R_0}'}\times[-\widetilde R_0,\widetilde R_0]\right)$ is the graph of some $f:\R^{n-1}\setminus \overline{B_{\widetilde R_0}'}\to\R$. Moreover, combining \eqref{eq:37y4tgweeub} and \eqref{eq:q7toagvg} we see that
\begin{align}\label{eq:89ygoiubjbad}
    \mathcal R_{ e_\infty}(\Sigma) \cap (B_{2^{k+3}}\setminus \overline{B_{2^{k-3}}})&\subset \{|x_n|\leq CC_\alpha 2^{\alpha k}\}\,,\quad\mbox{thus}\quad |f(x')|\leq CC_\alpha |x'|^\alpha.
\end{align}
Finally, combining \eqref{eq:89ygoiubjbad} and standard elliptic estimates for the minimal graph equation (using $|x|^2|H_\Sigma|\leq C_0$) we can upgrade \eqref{eq:89ygoiubjbad} to
$$|f|+|x'||\nabla f|\leq C_\alpha|x'|^\alpha$$
as desired.
\end{proof}

\subsection{Establishing the iterative result --- Proof of \Cref{prop:mainiteration}}
We need the following simple lemma for a function which is almost harmonic on a large annulus and has controlled growth.
\begin{lemma}[Harmonic approximation]\label{lincomp} Fix $\lambda, \alpha\in (0,1)$. There exists $\delta_0\in(0,\lambda]$, depending on $\lambda, \alpha, n$, such that the following holds. Let $v:B_{\frac{1}{\delta_0}}\setminus \overline{B_{\delta_0}}\subset \R^{n-1} \to \R$ satisfy:
\begin{itemize}
    \item $|{\rm Tr}(A(x')D^2 v)|\leq \delta_0$, where $\|A-{\rm Id}\|_{L^\infty(B_{\frac{1}{\delta_0}}\setminus \overline{B_{\delta_0}})}\leq \delta_0$.
    
    \item $\|v\|_{L^\infty(B_{2\rho}\setminus \overline{B_\rho})}\leq \rho^{2-\alpha}$, for $1\leq\rho\leq\frac{1}{\delta_0}$.
    
    \item $\|v\|_{L^\infty(B_{2\rho}\setminus \overline{B_\rho})}\leq \rho^{\alpha}$, for $\delta_0\leq\rho\leq 1$.
\end{itemize}
Then there exists $a\in\R^{n-1}$ such that $\|v-a\cdot x\|_{L^\infty(B_{\frac{1}{\lambda}}\setminus \overline{B_{\lambda}})
}\leq \lambda$.
\end{lemma}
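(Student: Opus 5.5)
We argue by compactness and contradiction. Suppose the statement fails for fixed $\lambda,\alpha$. Then for a sequence $\delta_j\downarrow 0$ there are matrices $A_j$ and functions $v_j$ on $B_{1/\delta_j}\setminus\overline{B_{\delta_j}}\subset\R^{n-1}$ such that $|{\rm Tr}(A_jD^2v_j)|\leq\delta_j$, $\|A_j-{\rm Id}\|_{L^\infty}\leq \delta_j$, and the two growth bounds hold. Yet for every $a\in\R^{n-1}$ we have $\|v_j-a\cdot x\|_{L^\infty(B_{1/\lambda}\setminus\overline{B_\lambda})}>\lambda$.

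\textbf{Compactness.} The growth bounds give uniform $L^\infty$ bounds for $v_j$ on every compact $K\subset\R^{n-1}\setminus\{0\}$, once $j$ is large. The operators ${\rm Tr}(A_jD^2\cdot)$ are uniformly elliptic with bounded measurable coefficients, so the Krylov--Safonov Hölder estimates give equicontinuity on compacts. (If the coefficients are continuous, or the equation is in divergence form, De Giorgi--Nash or $W^{2,p}$ estimates work equally well.) By Arzelà--Ascoli, a subsequence converges locally uniformly on $\R^{n-1}\setminus\{0\}$ to some $v$. Since $A_j\to{\rm Id}$ and the right-hand sides tend to $0$, stability of viscosity solutions under uniform convergence shows that $v$ is harmonic in $\R^{n-1}\setminus\{0\}$. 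Moreover, $|v|\leq 2^{2-\alpha}\rho^{2-\alpha}$ on $B_{2\rho}\setminus B_\rho$ for $\rho\geq1$, and $|v|\leq 2^\alpha\rho^\alpha$ for $\rho\leq1$. Together these give $|v(x)|\leq C|x|^{\alpha}$ near $0$ and $|v(x)|\leq C|x|^{2-\alpha}$ at infinity.

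\textbf{Liouville step.} Near the origin $v$ is bounded, so the singularity at $0$ is removable, since $n-1\geq2$ and a bounded harmonic function in a punctured ball extends. Hence $v$ is entire harmonic in $\R^{n-1}$. The growth $|v|\leq C|x|^{2-\alpha}$ with $2-\alpha<2$ and Liouville's theorem then force $v$ to be affine, $v=b+a\cdot x$. Finally $|v(x)|\leq C|x|^\alpha\to0$ as $x\to0$ gives $b=0$, so $v=a\cdot x$. Locally uniform convergence on the compact annulus $\overline{B_{1/\lambda}}\setminus B_\lambda$ then yields $\|v_j-a\cdot x\|_{L^\infty(B_{1/\lambda}\setminus\overline{B_\lambda})}\leq\lambda$ for large $j$, a contradiction. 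The requirement $\delta_0\leq\lambda$ only ensures that this annulus sits inside the domain.

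\textbf{Main obstacle.} The delicate point is the regularity needed for compactness up to the prescribed annulus with non-smooth $A$. I would use Krylov--Safonov interior Hölder estimates on slightly larger annuli, which are available because the domains exhaust $\R^{n-1}\setminus\{0\}$. The removability at the origin and the exclusion of constants both depend on the small-scale bound $\rho^\alpha$. This is exactly where that hypothesis enters, and without it $\log|x|$ or $|x|^{3-n}$ type terms (or constants) could survive.
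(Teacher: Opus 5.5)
Your proof is correct and follows essentially the same route as the paper: contradiction and compactness, passage to a limit that is harmonic on $\R^{n-1}\setminus\{0\}$ via viscosity stability, and a Liouville-type classification giving $v=a\cdot x$. The differences are cosmetic. You use Krylov--Safonov H\"older estimates where the paper uses Cordes--Nirenberg $C^{1,\alpha}$ bounds, and either suffices, since only locally uniform convergence is needed. You also use a removable-singularity argument plus polynomial-growth Liouville in place of the paper's separation of variables.
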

\begin{proof}
We argue by contradiction. If no such $\delta_0>0$ exists, considering the sequence $\delta_{0,i}=\frac{1}{i}\to 0$ there need to be associated $v_i$ as in the hypotheses of the theorem but such that
\begin{equation}\label{eq:71401gfvpiwvb}
    \mbox{no}\ \ a\in\Sp^{n-1}\ \ \mbox{satisfies}\quad\|v_i-a\cdot x\|_{L^\infty(B_{2}\setminus \overline{B_{1/2}})
}\leq \lambda\quad\mbox{for any}\ \  i \ \ \mbox{in the sequence.}
\end{equation}
For $i$ large enough, the $v_i$ satisfy $C^{1,\alpha}_{loc}(\R^2\setminus\{0\})$ bounds, by the standard Cordes--Nirenberg estimates \cite{Cordes1956, Nirenberg1954}. By Ascoli--Arzel\`a, a subsequence converges locally to some limit $v_\infty$ in $C^{1,\alpha}$, which is a---viscosity, thanks to the $C^0_{loc}$ convergence, and hence strong---solution to the Laplace equation on $\R^{n-1}\setminus\{0\}$. Moreover, $v_\infty$ grows less than $|x|^\alpha$ as $|x|\to 0$ and less than $|x|^{2-\alpha}$ as $|x|\to\infty$.

By the usual Liouville theorem for harmonic functions in $\R^n\setminus \{0\}$, which follows for instance by separation of variables, the growth assumptions imply that $v_\infty=a\cdot x$ for some $a\in\Sp^{n-1}$. Indeed, the fundamental solution of the Laplacian in $\R^{n-1}$ cannot be a term in $v_\infty$ since it blows up at $0$, and quadratic or higher order terms cannot contribute to $v_\infty$ either since they grow faster than $|x|^{2-\alpha}$ at infinity.

But then, the uniform convergence in $B_2\setminus \overline{B_{1/2}}$ to $v_\infty$ gives a contradiction with \eqref{eq:71401gfvpiwvb} for $\delta_{0,i}$ small enough.
\end{proof}
{\bf Preliminaries: Rescaling and local graphicality}

For convenience, we start by considering $\widetilde \Sigma:=\frac{1}{2^{k_1}}\Sigma$.
\begin{remark}\label{rmk:89ttqovba}
    The original hypotheses---in scales \textbf{from $-k_0$ to $\infty$} now---become:
    \begin{itemize}
    \item $|x||\mathrm{II}_{\widetilde \Sigma}|\leq C$
    \item $|x|^2|H[\widetilde \Sigma]|\leq 2^{-k_1}$
    
    \item We have
    \begin{itemize}
    \item Excess decay between $-k_0$ and $0$:
    \begin{equation}\label{eq:s1r138tqhq}
        |x\cdot \widetilde e_{k}|\leq \delta^{k}\ep 2^{k} \mbox{ for all } x\in\widetilde \Sigma\cap(B_{2^{k+1}}\setminus \overline{B_{2^{k-1}}}),\,\quad -k_0\leq k \leq 0\,.
    \end{equation}
    \item Preservation of best flatness:
    \begin{equation}\label{eq:s1r138tqhq2}
        |x\cdot \widetilde e_{k}|\leq \ep 2^{k} \mbox{ for all } x\in\widetilde \Sigma\cap(B_{2^{k+1}}\setminus \overline{B_{2^{k-1}}}),\,\quad 1\leq k \,.
    \end{equation}
\end{itemize}
\end{itemize}
\end{remark}

\begin{lemma}[Local graphicality lemma]\label{lem:grphregnHyp}
    There exist some $c_0>0$ and $C_2$, depending only on $n$, $k_0$ and $C_0$, such that the following holds. Assume that $\ep\leq c_0$. Then, there is some $f:B_{2^{k_0-1}}'\setminus \overline{B_{2^{-k_0+1}}'}\to\R$ such that
    \begin{equation}\label{eq:894tibhabHyp}
        \mathcal R_{\widetilde e_0}(\widetilde \Sigma)\cap \left((B_{2^{k_0-1}}'\setminus \overline{B_{2^{-k_0+1}}'})\times [-2^{k_0-1},2^{k_0-1}]\right)={\rm graph}\, f\,,
    \end{equation}
with
$$f(x')\leq C_2\ep|x'|^{\alpha}\quad\mbox{for}\quad 2^{-k_0+1}\leq|x'|\leq 1$$
and
$$f(x')\leq C_2\ep|x'|\log(|x'|+1)\quad\mbox{for}\quad 1\leq|x'|\leq 2^{k_0-1}\,.$$
Moreover,
$$\mathcal R_{\widetilde e_0}(\widetilde \Sigma) \cap (B_{2^{k_0-1}}\setminus \overline{B_{2^{-k_0+2}}})\subset (B_{2^{k_0-1}}'\setminus \overline{B_{2^{-k_0+1}}'})\times [-2^{k_0-1},2^{k_0-1}]\,.$$
\end{lemma}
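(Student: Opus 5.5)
The plan is to control how much the directions $\widetilde e_k$ can rotate across the finitely many dyadic scales $-k_0\le k\le k_0$, then apply \Cref{lem:mf3rt7qwg0tba} at each scale with the common direction $\widetilde e_0$, and finally glue the local graphs.

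\textbf{Step 1: comparing neighbouring directions.} Fix $k$ with $-k_0\le k\le k_0$. The annuli $B_{2^{k+1}}\setminus\overline{B_{2^{k-1}}}$ and $B_{2^{k+2}}\setminus\overline{B_{2^{k}}}$ overlap in $B_{2^{k+1}}\setminus\overline{B_{2^{k}}}$. Rescaling by $2^{-k}$ and using $|x||\mathrm{II}_{\widetilde\Sigma}|\le C$, \Cref{lem:mf3rt7qwg0tba} (for $\ep\le c_0$ small) shows that $\widetilde\Sigma$ is a union of graphs of slope $O(\ep_k)$ over the $\widetilde e_k^\perp$-plane in this overlap. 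Here $\ep_k$ is the flatness at scale $k$, namely $\delta^k\ep$ for $k\le0$ and $\ep$ for $k\ge1$. By \Cref{lem:spiralstruc} and connectedness, $\widetilde\Sigma\cap\partial B_r\neq\emptyset$ for every $r$ in the range, and the sheet spans an $(n-1)$-dimensional region of diameter about $2^k$ in the overlap. Such a set cannot lie in two slabs of widths $\ep_k2^k$ and $\ep_{k+1}2^{k+1}$ unless their normals are close, which gives
$$|\widetilde e_{k+1}-\widetilde e_k|\le C\max(\ep_k,\ep_{k+1}),$$
after choosing signs of the $\widetilde e_k$ consistently.

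\textbf{Step 2: summing the rotations.} For $k\le 0$ we have $\ep_k=\delta^k\ep=2^{(\alpha-1)k}\ep$. Summing from $k$ up to $0$ gives
$$|\widetilde e_k-\widetilde e_0|\le C\ep\sum_{j=k}^{0}2^{(\alpha-1)j}\le C\ep\, 2^{(\alpha-1)k}.$$
Therefore, for $x$ in the $k$-th annulus,
$$|x\cdot\widetilde e_0|\le |x\cdot\widetilde e_k|+2^{k+1}|\widetilde e_k-\widetilde e_0|\le C\ep\,2^{\alpha k}\le C\ep|x|^\alpha.$$
For $1\le k\le k_0$ the differences are each $\le C\ep$, so $|\widetilde e_k-\widetilde e_0|\le Ck\ep$. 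This gives $|x\cdot\widetilde e_0|\le C\ep\,2^k(k+1)$, which is $\le C\ep|x|\log(|x|+1)$ on that annulus, up to constants depending on $k_0$. In particular the whole set $\widetilde\Sigma\cap(B_{2^{k_0}}\setminus\overline{B_{2^{-k_0}}})$ lies in a slab $\{|x\cdot\widetilde e_0|\le C(k_0)\ep|x|\}$. Taking $c_0$ small depending on $k_0$ makes this thin.

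\textbf{Step 3: graph over $\widetilde e_0^\perp$.} Apply \Cref{lem:mf3rt7qwg0tba} on each dyadic annulus (rescaled) with the fixed direction $\widetilde e_0$; this is valid once $C(k_0)\ep\le c_0$. It gives graphs over the corresponding annuli in $\R^{n-1}$. The inclusion statement (the last claim of the lemma) follows as in Step 1 of \Cref{lem:mf3rt7qwg0tba}, since $|x_n|\ll|x|$ forces $|x'|\ge|x|/2>2^{-k_0+1}$. By \Cref{lem:spiralstruc}, the reduction to connected $\Sigma$, and matching of decompositions on overlaps, each local decomposition consists of a single sheet. These sheets patch into one function $f$ on $B'_{2^{k_0-1}}\setminus\overline{B'_{2^{-k_0+1}}}$. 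The bounds on $f$ are exactly the slab bounds from Step 2.

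\textbf{Main obstacle.} The hardest step is Step 1: making the comparison of directions rigorous. This is the step that needs $\widetilde\Sigma$ to be genuinely spread out, hence the use of local graphicality and nonemptiness on every sphere, and it is also where the signs of the $\widetilde e_k$ must be fixed. The rest is summing a geometric series, with constants depending on $k_0$.
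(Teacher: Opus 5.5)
Your proposal is correct and follows essentially the same route as the paper. Both compare consecutive directions $\widetilde e_k$ on overlapping dyadic annuli via local graphicality, sum the resulting geometric series to get slab bounds relative to $\widetilde e_0$ (giving $C\ep|x|^\alpha$ below scale $1$ and $C\ep k2^k$ above), and then invoke \Cref{lem:mf3rt7qwg0tba} for the graph and the inclusion. The differences are cosmetic: you apply the graphicality lemma annulus by annulus and patch, whereas the paper applies it once with $k_0$-dependent radii. You also make explicit the sign choice for the $\widetilde e_k$ and the single-sheet property coming from connectedness, both of which the paper leaves implicit.
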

\begin{proof}
    Without loss of generality $\widetilde e_0=e_n$.

    We first consider $-k_0+1\leq k\leq 0$. Applying \eqref{eq:s1r138tqhq} with $(k-1)$ and $k$, we deduce in particular that
\begin{align}\label{eq:t2831t41gtq}
    \widetilde \Sigma \cap (B_{2^{k}}\setminus \overline{B_{2^{k-1}}})&\subset \{|\widetilde e_{k-1}\cdot x|\leq \delta^{k-1}\ep 2^{k-1}\}\cap \{|\widetilde e_{k}\cdot x|\leq \delta^k\ep 2^k\}\,.
\end{align}
Then,  the local graphicality (coming from \Cref{lem:spiralstruc}) of $\widetilde \Sigma$ and the triangle inequality easily give that
$$
|\widetilde e_{k+1}-\widetilde e_{k}|\leq \frac{C\delta^k\ep 2^k}{2^k}=C\delta^k\ep\,,\quad -k_0\leq k\leq 0\,.
$$
Similarly one obtains
$$
|\widetilde e_{k+1}-\widetilde e_{k}|\leq \frac{C\ep 2^k}{2^k}=C\ep\,,\quad 1\leq k\,.
$$
Then, using that $|x\cdot\widetilde e_0|\leq |x\cdot \widetilde e_k|+|x||\widetilde e_k-\widetilde e_0|$, summing the corresponding geometric series to bound $|\widetilde e_k-\widetilde e_0|$ we deduce that
    \begin{equation}\label{eq:7g51qy4tq08v}
    |x\cdot \widetilde e_0|\leq C\delta^{k}\ep2^k\quad \mbox{for all}\quad x\in\widetilde \Sigma\cap(B_{2^{k}}\setminus \overline{B_{2^{k-1}}})\quad\mbox{and}\quad -k_0\leq k \leq 0\,,
\end{equation}
    \begin{equation}\label{eq:7g51qy4tq08v2}
    |x\cdot \widetilde e_0|\leq C\ep k2^{k}\quad\mbox{for all}\quad x\in\widetilde \Sigma\cap(B_{2^{k}}\setminus \overline{B_{2^{k-1}}})\quad\mbox{and}\quad 1\leq k\,.
\end{equation}
Having fixed $k_0$ and all other parameters, the graphicality claim \eqref{eq:894tibhabHyp} and the corresponding inclusion are proved just via \Cref{lem:mf3rt7qwg0tba}, up to making $\ep_0>0$ small enough (recall that $\ep\leq \ep_0$ by assumption). Finally, \eqref{eq:7g51qy4tq08v}--\eqref{eq:7g51qy4tq08v2} give exactly the growth conditions for $f$.
\end{proof}
{\bf Proof of the iteration}

We can finally give:
\begin{proof}[Proof of \Cref{prop:mainiteration}]
As above, we consider $\widetilde \Sigma:=\frac{1}{2^{k_1}}\Sigma$ in place of $\Sigma$, which satisfies the hypotheses in \Cref{rmk:89ttqovba}. After a rotation, we moreover can (and do) assume that for $k=0$ we have $\widetilde e_{0}=e_n$.

By \Cref{lem:grphregnHyp}, as long as $\ep_0>0$ is small enough there is some $f:B_{2^{k_0-1}}'\setminus \overline{B_{2^{-k_0+1}}'}\to\R$ such that
    \begin{equation}\label{eq:894tibhabHyp2}
        \widetilde \Sigma\cap \left((B_{2^{k_0-1}}'\setminus \overline{B_{2^{-k_0+1}}'})\times [-2^{k_0-1},2^{k_0-1}]\right)={\rm graph}\, f\,,
    \end{equation}
with
\begin{equation}\label{eq:89ygqiubrgipab}
    f(x')\leq C_2\ep|x'|^{\alpha}\quad\mbox{for}\quad 2^{-k_0+1}\leq|x'|\leq 1
\end{equation}
and
\begin{equation}\label{eq:89ygqiubrgipab2}
    f(x')\leq C_2\ep|x'|\log |x'|\leq C_2\ep|x'|^{2-\alpha}\quad\mbox{for}\quad 1\leq|x'|\leq 2^{k_0-1}\,.
\end{equation}
Let $\lambda>0$, the (universal) choice of which will be given at the end. Let $\delta_0=\delta_0(\lambda,\alpha,\beta,n)\in(0,\lambda]$ be given by \Cref{lincomp}.\\
We argue in several steps.

\noindent {\bf Step 1.} Linearisation.

By the uniform curvature bound assumption, up to making $\ep_0$ smaller (in terms of $\delta_0,n,k_0,C_0$), a standard interpolation with \eqref{eq:89ygqiubrgipab}--\eqref{eq:89ygqiubrgipab2} shows that $|\nabla f|\leq \delta_0$ in $2^{-k_0+2}\leq|x'|\leq 2^{k_0-2}$. Note that
$$H[\widetilde \Sigma](x',f(x'))={\rm div}\left(\frac{\nabla f}{\sqrt{1+|\nabla f|^2}}\right)(x')={\rm Tr}(A(x')D^2f)\,,$$
where $A(x')$ is the $n\times n$ matrix with components
$$A_{ij}(x')=\frac{\delta_{ij}}{\sqrt{1+|\nabla f|^2}}-\frac{\partial_i f \partial_j f}{(1+|\nabla f|^2)^{3/2}}\,.$$
Up to making $\ep_0$ even smaller, by the bound on $\nabla f$ we see that
$$|A(x')-{\rm Id}|\leq\delta_0 \quad\mbox{in}\quad 2^{-k_0+2}\leq|x'|\leq 2^{k_0-2}\,.$$
Define the vertical scaling $v(x')=\frac{f(x')}{C_2\ep}$, so that (by the estimate for $H[\widetilde \Sigma]$ in \Cref{rmk:89ttqovba}) we get
$$\left|{\rm Tr}(A(x')D^2v)\right|=\left|\frac{H[\widetilde \Sigma](x',f(x'))}{C_2\ep}\right|\leq C\frac{2^{-k_1}}{2^{-2 k_0}\ep}\leq C(k_0)\frac{2^{-k_1}}{\ep}\,.$$
Using the assumption $\ep\geq 2^{(\beta-1) k_1}$, we can then bound $$\left|{\rm Tr}(A(x')D^2v)\right|\leq C(k_0)2^{-\beta k_1}\,.$$
We want to apply \Cref{lincomp} to $v$. Observe that, since $v(x')=\frac{f(x')}{C_2\ep}$, \eqref{eq:89ygqiubrgipab}--\eqref{eq:89ygqiubrgipab2} transform into
\begin{equation}\label{eq:89ygqiubrgipabv}
    v(x')\leq |x'|^{\alpha}\quad\mbox{for}\quad 2^{-k_0+1}\leq|x'|\leq 1
\end{equation}
and
\begin{equation}\label{eq:89ygqiubrgipab2v}
    v(x')\leq |x'|^{2-\alpha}\quad\mbox{for}\quad 1\leq|x'|\leq 2^{k_0-1}\,.
\end{equation}

Set $k_0:=\lceil|\log_2 \delta_0|\rceil+2$; in particular, $2^{-k_0+2}\leq \delta_0\leq \frac{1}{\delta_0}\leq 2^{k_0-2}$. Choose then $K$ large enough---depending also on $k_0$---so that $C(k_0)2^{-\beta K}\leq \delta_0$, so that (since $k_1\geq K$ by assumption) we obtain
\begin{equation}\label{eq:67fhablsa}
    \left|{\rm Tr}(A(x')D^2v)\right|\leq \delta_0\,.
\end{equation}
Combining all of the above we see that the hypotheses of \Cref{lincomp} are satisfied, hence we get $a\in\R^{n-1}$ such that $\|v-a\cdot x'\|_{L^\infty(B_{\frac{1}{\lambda}}\setminus \overline{B_{\lambda}})
}\leq \lambda$. Equivalently:
\begin{equation}\label{eq:14t8ggabg}
    \|f-\bar a\cdot x'\|_{L^\infty(B_{\frac{1}{\lambda}}\setminus \overline{B_{\lambda}})
}\leq C_2\ep\lambda\quad\mbox{for some}\quad \bar a\in\R^{n-1}.
\end{equation}

\noindent {\bf Step 2.} Updating the coefficients.\\
We will choose $\lambda\leq 1/4$, thus (by the choices in Step 1) also $\delta_0\leq \lambda\leq 1/4$ and $k_0=\lceil|\log_2 \delta_0|\rceil+2\geq 4$.

\noindent {\bf Definition of $e_{k_1'}'$:} Since (by \eqref{eq:89ygqiubrgipab}--\eqref{eq:89ygqiubrgipab2}) we have $|f(x')|\leq 16C_2\ep$ in $\frac{1}{4}\leq |x'|\leq 4$, we immediately see that $|\bar a|\leq C_3\ep$. Let $e_{k_1'}':=\frac{(-\bar a,1)}{|(-\bar a,1)|}$.

Now, by \Cref{lem:grphregnHyp}, we have $\widetilde \Sigma \cap (B_{4}\setminus \overline{B_{1/4}})\subset {\rm graph}\, f$. But then, we can bound
\begin{equation}\label{eq:8t1oibwg6d}
    |x\cdot e_{k_1'}'|=|(x',f(x'))\cdot\frac{(-\bar a,1)}{|(-\bar a,1)|}|\leq |f(x')-\bar a\cdot x'|\leq  C_2\ep\lambda\quad\mbox{for any}\quad x\in \widetilde \Sigma \cap (B_{4}\setminus \overline{B_{1/4}})\,.
\end{equation}

\noindent {\bf Step 3.} Excess decay between $k_1'-k_0$ and $k_1'$.

Recall that we set $k_1'=k_1+1$ and $\ep'=\delta \ep$. Choosing finally $\lambda=\min\{2\delta C_2^{-1},1/4\}$, and recalling that $\Sigma=2^{k_1}\widetilde \Sigma$, we find that
\begin{equation}\label{eq:214thbqiobvap}
    |x\cdot e'_{k_1'}|\leq (\ep 2\delta) 2^{k_1}=\ep'2^{k_1'} \mbox{ for all } x\in\Sigma\cap(B_{2^{k_1'+1}}\setminus \overline{B_{2^{k_1'-1}}})\,.
\end{equation}
Now, for $k\leq k_1'-1$ instead, we choose to just keep $e_k':=e_k$. Using \eqref{eq:214thbqiobvap} for $k=k_1'$, and simply using the excess decay assumption in \Cref{prop:mainiteration} for scales $k_1'-k_0\leq k\leq k_1'-1$, we find that
$$|x\cdot e_{k}'|\leq C'\delta^{k-k_1'}\ep'2^{k} \mbox{ for all } x\in\Sigma\cap(B_{2^{k+1}}\setminus \overline{B_{2^{k-1}}}),\, k_1'-k_0\leq k \leq k_1'\,.$$
This means precisely that we have shown the excess decay between scales $k_1'-k_0$ and $k_1'$, as desired.

\noindent {\bf Step 4.} Preservation of best flatness.

Take any $k_1''\geq k_1+1$.

\noindent {\bf Claim.} The hypotheses of \Cref{prop:mainiteration} hold with the same $\ep$ and $e_k$ but with $k_1''$ in place of $k_1$.

Indeed:
\begin{itemize}
        \item We have $k_1''\geq k_1 +1\geq K$.
        \item We also have $2^{(\alpha-1) k_1''}\leq 2^{(\alpha-1) k_1}\leq\ep\leq \ep_0$.
        \item The same $e_k\in \Sp^{n-1}$, with $k\geq k_1''-k_0$ now, satisfy:
\begin{itemize}
    \item Excess decay between $k_1''-k_0$ and $k_1''$:
    $$|x\cdot e_{k}|\leq \delta^{k-k_1}\ep 2^{k}\leq \delta^{k-k_1''}\ep 2^{k} \mbox{ for all } x\in\Sigma\cap(B_{2^{k+1}}\setminus \overline{B_{2^{k-1}}}),\,\quad k_1''-k_0\leq k \leq k_1''\,.$$
    \item Preservation of best flatness: $$|x\cdot e_{k}|\leq \ep 2^{k} \mbox{ for all } x\in\Sigma\cap(B_{2^{k+1}}\setminus \overline{B_{2^{k-1}}}),\,\quad k_1''+1\leq k \,.$$
\end{itemize}
\end{itemize}
But then, this means that we can apply Steps 1 to 3 above again, just with $k_1''$ in place of $k_1$. The conclusion of Step 3 gives then the existence of some new $e_{k_1''+1}'\in\Sp^{n-1}$ such that
$$|x\cdot e'_{k_1''+1}|\leq \ep'2^{k_1''+1} \mbox{ for all } x\in\Sigma\cap(B_{2^{k_1''+1}}\setminus \overline{B_{2^{k_1''-1}}})\,.$$
Since $k_1''\geq k_1+1$ was arbitrary, this gives precisely the preservation of the best flatness $\ep'$ that we needed to conclude the proof of \Cref{prop:mainiteration}.
\end{proof}

\section{Structure of finite index Allen--Cahn solutions}\label{sec:ACstruct}
\subsection{Graphicality}\label{sec:graphicality}
The main result of this section is:
\begin{proposition}\label{prop:endgrowth}
    Let $3\leq n\leq 7$. Let $u:\R^n\to[-1,1]$ be a finite index solution to A--C, with bounded energy density. Assume that \eqref{eq:stableclass} is true in $\R^n$.
    
    Then, there are some $R_0>0, e\in\Sp^{n-1}$ and $N\in\N$, all depending on $u$, such that the following holds:
    
    There exist $N$ smooth functions $f_i:\R^{n-1}\setminus \overline{B_{R_0}'}\to\R$, $f_1<...<f_N$, such that
    \begin{equation*}
        \mathcal R_e\left(\{u=0\}\right)=\bigcup_i {\rm graph}\, f_i\quad \mbox{in}\quad \R^{n}\setminus (\overline{B_{R_0}'}\times [-R_0,R_0])\,.
    \end{equation*}
    Additionally, given $\alpha>0$, there is $C$ depending also on $\alpha$ such that
    \begin{equation}\label{eq:endgrowth}
         |f_i| +|x||D f_i|+|x|^2 |D^2 f_i|\leq C|x|^{\alpha}\,.
    \end{equation}
\end{proposition}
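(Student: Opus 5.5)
The plan is to verify the hypotheses of \Cref{prop:dec} for $\Sigma:=\{u=0\}\setminus \overline{B_{R_0/2}}$, and then upgrade the conclusion using \Cref{thm:sheetpart2}.

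\textbf{Step 1: stability away from a ball.} Since $u$ has finite index, a standard argument (given $I+1$ disjointly supported test functions with negative $Q$, one can span an $(I+1)$-dimensional negative subspace) gives $R_1$ such that $u$ is stable in $\R^n\setminus\overline{B_{R_1}}$. For $|x_0|\ge 4R_1$, the ball $B_{|x_0|/2}(x_0)$ lies in the stable region. Monotonicity (\Cref{lem:monformula}) gives ${\bf M}_{|x_0|/2}(u,x_0)\le C{\bf M}_{2|x_0|}(u)\le C{\bf M}_\infty$. Then \Cref{thm:sheetstabconj} (with $\ep=1$ and $R=|x_0|/2$ large) yields the sheeting assumptions in $B_{|x_0|/4}(x_0)$, and \Cref{thm:sheetimpc2alpha} gives, after covering,
\[
|x||\mathrm{II}_{\{u=0\}}|+|x|^2|\mathrm H_{\{u=0\}}|\le C_0 \quad\mbox{for } |x|\ge R_0.
\]
This is \eqref{eq:89tqrbnnbs}. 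Embeddedness follows from $|\nabla u|>0$ on $\{u=0\}$, which holds by the sheeting assumptions.

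\textbf{Step 2: flatness at every large scale.} This is the main obstacle: we must show that for any $\ep_0>0$ there is $R_0$ such that each annulus $B_{8R}\setminus\overline{B_{R/8}}$ with $R\ge 8R_0$ satisfies $\{u=0\}\subset\{|e_R\cdot x|\le\ep_0R\}$. I would argue by contradiction with blow-downs. Take $R_k\to\infty$ violating the property and set $u_k(x)=u(R_kx)$, which solves $\ep_k$-A--C with $\ep_k=R_k^{-1}$. These have uniformly bounded density and are stable outside $B_{R_1/R_k}$. By Hutchinson--Tonegawa, their energy measures converge to a stationary integral varifold $V$ that is a cone (by the monotonicity formula and the bounded density). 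Step 1 gives curvature bounds $|\mathrm{II}|\le C_0/|x|$ for the level sets of $u_k$, so away from the origin $V$ is supported on a smooth minimal surface, which is stable as a limit of the Toda-sheet picture, with $\mathrm H=0$ in the limit. This makes $V$ a stable minimal cone, smooth away from $0$, in $\R^n$ with $n\le7$. By Simons' theorem it is a hyperplane with multiplicity. Hausdorff convergence of $\{u_k=0\}$ on compact subsets of the annulus then follows from the curvature bounds together with \Cref{lem:expdecay} (no zero set can lie far from the support, since there the density would be small and the clearing-out lemma applies). This contradicts the choice of $R_k$.

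\textbf{Step 3: conclusion.} \Cref{prop:dec} now gives $e$ and graphs $f_i$ outside a cylinder, with $|f_i|+|x||\nabla f_i|\le C_\alpha|x|^\alpha$. The ordering $f_1<\dots<f_N$ comes from embeddedness and the connectedness of the domain. For the second derivatives, apply \Cref{thm:sheetpart2} in the balls $B_{|x|/4}(x)$, which are flat at scale $|x|$. This gives $|D^2f_i|\le C/|x|$. To reach the stronger bound $|x|^2|D^2f_i|\le C|x|^\alpha$, I would use interior Schauder estimates for the mean curvature equation: on $B_{|x'|/4}(x')$, $f_i$ has oscillation $\le C|x'|^\alpha$ and $\mathrm H[f_i]$ satisfies $\|\mathrm H\|_{L^\infty}\le C|x'|^{-2}$ with $C^\vartheta$ seminorm controlled by \eqref{eq:meancurvsheet}. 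Rescaling by $|x'|$ then gives $|D^2f_i|\le C(|x'|^{\alpha-2}+|x'|^{-2}\cdot|x'|^{0})$, which is $\le C|x'|^{\alpha-2}$.
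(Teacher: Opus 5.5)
Your proposal is correct and follows essentially the paper's route: stability outside a ball and sheeting via \Cref{thm:sheetstabconj}, the curvature bounds from \Cref{thm:sheetimpc2alpha}, large-scale flatness by a blow-down contradiction argument with Simons' classification, then \Cref{prop:dec}, and finally a scaled Schauder estimate for the mean curvature equation to upgrade to the $D^2 f_i$ bound. The differences are cosmetic. The paper passes to the blow-down limit using the $C^{2,\vartheta}$ sheet estimates and Arzel\`a--Ascoli rather than Hutchinson--Tonegawa, and it gets stability of the limit cone by passing \eqref{eq:SZineq1} to the limit, which is the clean way to make your ``limit of the Toda-sheet picture'' precise. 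It also disposes separately of the trivial case ${\bf M}_\infty(u)=0$, where $\Sigma$ may be empty and \Cref{def:standassumpt} does not apply.
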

In \Cref{sec:todaimprov} we will then use the second-order flatness of $\{u=0\}$---given by \eqref{eq:endgrowth}---to show that $u$ itself is well approximated by a superposition of 1D Allen--Cahn solutions, leading to an elliptic system (Toda) for the $f_i$ with small error.\\

\noindent {\bf Allen--Cahn setting.}
\begin{lemma}
In the setting of \Cref{prop:endgrowth}, there are $C_1$ and $R_0$, as well as a modulus of continuity $\omega$, such that the following hold:
\begin{itemize}
    \item Stability.
    $$u \mbox{ is a stable solution to A--C in } \R^n\setminus \overline{B_{R_0}}\,.$$
    \item Density bound. \begin{equation}\label{eq:engrowth}
            {\bf M}_R(u)\leq C_1\qquad\mbox{for every}\quad R>0\,.
        \end{equation}
    \item Sheeting assumptions. \begin{equation}\label{eq:sheetgb}
        |\mathcal{A}|(x)\leq \frac{C_1}{|x|}\quad \mbox{and}\quad |\nabla u|\geq \frac{1}{C_1}\quad \mbox{in}\quad \{|u|\leq 0.9\}\setminus \overline{B_{R_0}}\,.
        \end{equation}
    \item Large-scale flatness. \begin{equation}\label{eq:flatrot1}
        \mathcal R_{e_R}(\{u=0\})\cap (B_{8R}\setminus \overline{B_{R/8}})\subset\{|x_n|\leq \omega(R^{-1}) R\}\quad \mbox{for some}\quad e_R\in\Sp^{n-1},\quad\mbox{given}\quad R\geq R_0\,.
        \end{equation}
    \item Density lower bound. \begin{equation}\label{eq:nodensgaps}
        {\bf M}_\infty-\omega(R^{-1})\leq {\bf M}_{\omega(R^{-1})R}(y)\leq {\bf M}_\infty \quad\mbox{for all}\quad y\in \{e_R\cdot x=0\}\cap (B_{8R}\setminus \overline{B_{R/8}}),\quad\mbox{with } e_R\mbox{ from } \eqref{eq:flatrot1}.
        \end{equation}
\end{itemize}
\end{lemma}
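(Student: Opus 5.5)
We prove the five bullets in order; each is a standard consequence of finite index, the monotonicity formula, \Cref{thm:sheetstabconj}, and a blow-down compactness argument.

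\textbf{Stability and density bound.} Finite Morse index gives stability outside a compact set. Let $I$ be the index and suppose that for every $R$ the restriction of $u$ to $\R^n\setminus\overline{B_R}$ is unstable. Then we can choose test functions $\eta_1,\dots,\eta_{I+1}$ with pairwise disjoint compact supports in successive annuli, each satisfying $Q(\eta_j,\eta_j)<0$. Since $Q$ is local, their span is $(I+1)$-dimensional and $Q$ is negative definite on it, a contradiction. Hence $u$ is stable in $\R^n\setminus\overline{B_{R_0}}$ for some $R_0$. The density bound \eqref{eq:engrowth} is immediate: \Cref{lem:monformula} shows ${\bf M}_R$ is nondecreasing, so ${\bf M}_R\le{\bf M}_\infty=:C_1$ for all $R$.

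\textbf{Sheeting.} Take $x$ with $|x|\ge 4R_0$, and apply \Cref{thm:sheetstabconj} to $u$ on the ball $B_{|x|/2}(x)\subset\R^n\setminus\overline{B_{R_0}}$, with $\ep=1$. The density on this ball is bounded by $C\,{\bf M}_{2|x|}\le CC_1$, by monotonicity centred at the origin. Once $|x|$ is large compared with $\delta_0^{-1}$, the theorem yields the sheeting assumptions in $B_{|x|/4}(x)$. This gives $|\mathcal A|(x)\le C_1/|x|$ and $|\nabla u|\ge C_1^{-1}$ on $\{|u|\le0.9\}$, which is \eqref{eq:sheetgb} after enlarging $R_0$.

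\textbf{Large-scale flatness.} Argue by contradiction and compactness. Suppose there are $\eta>0$ and $R_k\to\infty$ such that no $e$ satisfies the inclusion in \eqref{eq:flatrot1} at scale $R_k$ with constant $\eta$. Consider the rescalings $u_k(x)=u(R_kx)$, which solve $\ep_k$-Allen--Cahn with $\ep_k=R_k^{-1}$. They are stable on $\R^n\setminus \overline{B_{R_0/R_k}}$ and have uniformly bounded density. By Hutchinson--Tonegawa and Tonegawa--Wickramasekera, the energy measures converge to a stationary integral varifold $V$ that is stable away from $0$. The rescaled sheeting estimate $|\mathcal A_{u_k}|\le C_1/|x|$ passes to the limit, via \Cref{lem:rjbagbda} and \Cref{thm:sheetimpc2alpha}: the limit ${\rm spt}\,V\setminus\{0\}$ is a smooth embedded stable minimal hypersurface with $|x||\mathrm{II}|\le C_1$, carrying integer multiplicity. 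The monotonicity formula, with ${\bf M}_R\to{\bf M}_\infty$, shows $V$ is a cone; this is the standard argument that the limit of the rescaled monotonicity identity forces $x\cdot\nabla u_k\to0$ in the energy sense. So $V$ is a stable minimal cone in $\R^n$ with $n\le7$, smooth away from the origin, and Simons' theorem makes it a hyperplane with multiplicity. The zero sets $\{u_k=0\}$ converge in the Hausdorff sense, locally in $B_8\setminus\overline{B_{1/8}}$, to ${\rm spt}\,V$. This uses \Cref{lem:expdecay} together with a clearing-out lower density bound at points of $\{|u_k|\le0.9\}$. The resulting flatness contradicts the choice of $R_k$. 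Finally, define $\omega(R^{-1})$ as the supremum of the optimal flatness over scales $\ge R$; this is a modulus of continuity.

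\textbf{Density lower bound.} The upper inequality in \eqref{eq:nodensgaps} is just monotonicity, centred at $y$, combined with ${\bf M}_\infty(u,y)={\bf M}_\infty$. The latter holds because translating the centre does not change the density at infinity. For the lower inequality, argue again by contradiction. Take points $y_k$ on the planes $\{e_{R_k}\cdot x=0\}$ in the annuli $B_{8R_k}\setminus\overline{B_{R_k/8}}$, and radii $r_k=\omega R_k$, with ${\bf M}_{r_k}(y_k)\le{\bf M}_\infty-\eta$. Rescaling by $R_k$, the limit varifold is the plane with multiplicity $N={\bf M}_\infty$, and the point $y_k/R_k$ converges to a point on it. Convergence of mass at a fixed small scale $s$ then gives ${\bf M}_{sR_k}(y_k)\to N$. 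The main obstacle is the quantifier issue: $r_k=\omega(R_k^{-1})R_k$ is much smaller than $R_k$. I expect to handle it by a diagonal choice of $\omega$, taking $\omega$ decaying slowly enough that ${\bf M}_{\omega R}(y)\ge{\bf M}_{sR}(y)-o(1)$ along the way. This also uses \Cref{lem:addproperties}, which gives $|{\bf M}-N|\le\delta_1N$ at intermediate scales once the sheets are flat there. The point is that by \Cref{thm:sheetpart2} and \Cref{lem:addproperties}, density at every scale between $\omega R$ and $R$ centred near the plane is close to the number $N$ of sheets. With that in hand, the lower bound follows by simply enlarging $\omega$ where necessary.
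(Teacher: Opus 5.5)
Your proposal is correct. The first three bullets are argued as in the paper. The last two follow the same blow-down and contradiction scheme ending in Simons' classification, but with a different compactness tool.

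\textbf{Flatness step.} The paper never passes through varifolds. It uses the uniform $C^{2,\vartheta}$ bounds of \Cref{thm:sheetimpc2alpha} for $\{\widetilde u_k=0\}$ away from the origin and extracts a $C^2$ limit by Arzel\`a--Ascoli. Stability comes from passing \eqref{eq:SZineq1} to the limit, and the cone property from monotonicity. The $C^2$ convergence of the graphs then gives the flatness of $\{\widetilde u_k=0\}$ in the annulus at once. You instead take a Hutchinson--Tonegawa/Tonegawa--Wickramasekera varifold limit and use the sheeting only to make ${\rm spt}\,V\setminus\{0\}$ smooth. This costs you an extra Hausdorff-convergence (clearing-out) step linking $\{u_k=0\}$ to ${\rm spt}\,V$. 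In exchange, stationarity across the origin, integrality and the cone property come from general theory, which the paper treats only briefly.

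\textbf{Density lower bound.} The quantifier issue you single out is not a real obstacle. Both \eqref{eq:flatrot1} and \eqref{eq:nodensgaps} only get weaker as $\omega$ increases. So the fixed-scale contradiction argument already produces $\omega$ by a diagonal choice: for each $s>0$ there is $R(s)$ with ${\bf M}_{sR}(y)\geq {\bf M}_\infty-s$ for all $R\geq R(s)$ and all admissible $y$. This is in line with the paper's appeal to the same contradiction argument.

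Your detour through \Cref{lem:addproperties} can also be made to work, but you must state two requirements on $\omega$ explicitly:
\begin{itemize}
\item $\omega(R^{-1})R\to\infty$;
\item the optimal flatness at scale $R$ is much smaller than $\omega(R^{-1})$, so that \Cref{thm:sheetpart2} applies in $B_{2\omega R}(y)$ and all $N$ sheets cross that ball.
\end{itemize}
Even then it only covers balls that stay inside the stable, sheeted region, not every scale up to $R$ as you claim.
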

\begin{proof}
The finite index condition implies that there is some $R_0$ such that $u$ is stable in $\R^n\setminus \overline{B_{R_0}}$: Otherwise, for every $i\in\N$ we would find some $\eta_i\in C_c^1(\R^n\setminus \overline{B_{2^i}})$ such that, in the notation of \eqref{eq:2vardef}, $Q(\eta_i,\eta_i)<0$. Then---since the $\eta_i$ have compact support---we would find an infinite subsequence $\eta_{i_k}$ such that all of their supports are disjoint and $Q(\eta_{i_k},\eta_{i_k})<0$. This easily shows that $Q(\eta,\eta)<0$ for any $\eta$ obtained as a nontrivial, finite linear combination of the $\eta_{i_k}$, giving a contradiction.

The second bullet holds since ${\bf M}_\infty(u)<\infty$ and we have the monotonicity formula (\Cref{lem:monformula}).

Then, \Cref{thm:sheetstabconj} shows that the sheeting assumptions hold in any large enough ball away from the origin, which---up to making $R_0$ larger and choosing $C_1$ accordingly---gives the third bullet.\\

    The large-scale flatness property \eqref{eq:flatrot1} is obtained by arguing by contradiction: Assume the existence of a sequence $R_k\to\infty$ such that \eqref{eq:flatrot1} does not hold for $R=R_k$. Consider $\widetilde u_k(x):=u(R_k x)$, which are solutions to $\ep_k$-A--C with $\ep_k=\frac{1}{R_k}$. By \Cref{thm:sheetimpc2alpha}, applied away from the origin, the level sets $\{\widetilde u_k=0\}$ satisfy---locally---uniform  area and $C^{2,\alpha}$ estimates. Then, the Arzel\`a--Ascoli theorem gives their subsequential, local $C^2$ convergence in $\R^n\setminus\{0\}$ to a limit submanifold $\widetilde\Sigma_\infty$, which is minimal by \eqref{eq:meancurvsheet}. By monotonicity of ${\bf M}_R(u)$, we easily see that $\widetilde\Sigma_\infty$ has constant area density equal to ${\bf M}_\infty(u)$, thus it is a cone. Moreover, passing \eqref{eq:SZineq1} to the limit, we deduce that it is actually stable away from the origin. Then, Simons' classification \cite{Simons68} shows that $\widetilde\Sigma_\infty$ is actually a hyperplane; since the $\{\widetilde u_k=0\}$ converge in $B_{16}\setminus \overline{B_{1/16}}$ to $\widetilde\Sigma_\infty$, letting $e_{R_k}$ be the normal vector to this hyperplane and scaling back we reach a contradiction.

    Finally, \eqref{eq:nodensgaps} follows by the same contradiction argument plus a simple comparison; see the proof of \cite[Proposition 3.10]{FSstable} for full details.
\end{proof}

Let $\{\Sigma_i\}_{i\in I}$ be an enumeration of the (possibly countably infinite) connected components of $\{u=0\}$ in $\R^n\setminus \overline{B_{R_0/2}}$, and let $\Sigma:=\bigcup_{i\in I} \Sigma_i$.
\begin{lemma}\label{lem:9017oqiubg}
Assume that ${\bf M}_\infty(u)> 0$. Then, $\Sigma$ satisfies the assumptions in \Cref{def:standassumpt}. Moreover, given $\ep_0>0$, up to making $R_0$ larger the following holds: Given $R\geq R_0$, there is some $e_R\in\Sp^{n-1}$ with
\begin{equation}\label{eq:1673gpbrgq}
    \Sigma \cap (B_{8R}\setminus \overline{B_{R/8}})\subset \{|e_R\cdot x|\leq \ep_0 R\}\,.
\end{equation}
\end{lemma}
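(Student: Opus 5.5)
We check the three items of \Cref{def:standassumpt} for $\Sigma$, and then obtain \eqref{eq:1673gpbrgq} directly from \eqref{eq:flatrot1}.

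\textbf{Step 1: $\Sigma$ is a nonempty embedded hypersurface.} By \eqref{eq:sheetgb}, $|\nabla u|\geq 1/C_1>0$ on $\{u=0\}\setminus\overline{B_{R_0}}$. The implicit function theorem then shows that $\{u=0\}$ is a smooth embedded hypersurface there. Its relative closure can only add points on $\partial B_{R_0/2}$, once we harmlessly replace $R_0$ by a slightly smaller radius in the definition of the annular region (equivalently, enlarge $R_0$ in \eqref{eq:sheetgb} and keep $R_0/2$ for $\Sigma$). Nonemptiness comes from ${\bf M}_\infty(u)>0$: if $\{u=0\}$ were empty outside $B_{R_0/2}$, then $u$ would have a fixed sign on the connected set $\R^n\setminus\overline{B_{R_0/2}}$. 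The density bound and sheeting would then force $u\to\pm1$ exponentially, by \Cref{lem:expdecay} applied at large scales with no transition layer. The energy density would therefore be zero, a contradiction.

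\textbf{Step 2: the curvature bounds \eqref{eq:89tqrbnnbs}.} The bound $|x||\mathrm{II}_\Sigma|\leq C$ follows from $|\mathrm{II}_{\{u=0\}}|\leq\mathcal A\leq C_1/|x|$ in \eqref{eq:sheetgb}. For the mean curvature bound $|x|^2|\mathrm H_\Sigma|\leq C$, fix $x_0\in\Sigma$ with $|x_0|=R$ large. Since $u$ is stable in $B_{R/2}(x_0)\subset\R^n\setminus\overline{B_{R_0}}$ and satisfies the sheeting assumptions there with constant $C_1$ (the scale comparison being uniform), \Cref{thm:sheetimpc2alpha} with $\ep=1$ gives $|\HH[f_i]|\leq C_2/R^2$ for the local graphs describing $\{u=0\}$ near $x_0$. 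Hence $|\mathrm H_\Sigma(x_0)|\leq C/|x_0|^2$. Points of $\Sigma$ with $R_0/2<|x|\leq CR_0$ are handled by compactness and smoothness of $u$, enlarging $C_0$ if needed. This yields \eqref{eq:89tqrbnnbs}.

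\textbf{Step 3: flatness \eqref{eq:1673gpbrgq}.} By \eqref{eq:flatrot1}, $\Sigma\cap(B_{8R}\setminus\overline{B_{R/8}})\subset\{|e_R\cdot x|\leq\omega(R^{-1})R\}$. Since $\omega$ is a modulus of continuity, $\omega(R^{-1})\leq\ep_0$ once $R$ is large enough, so enlarging $R_0$ gives \eqref{eq:1673gpbrgq}. Enlarging $R_0$ does not affect the previous steps.

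The only delicate point is the uniformity in Step 2. \Cref{thm:sheetimpc2alpha} is applied on balls of radius comparable to $|x_0|$, and we need the sheeting constant to be uniform there. This is exactly what \eqref{eq:sheetgb} provides, since $|\mathcal A|\leq C_1/|x|\leq 2C_1/R$ on $B_{R/2}(x_0)$.
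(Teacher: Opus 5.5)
Your proof is correct. For embeddedness, the bounds \eqref{eq:89tqrbnnbs} and the flatness \eqref{eq:1673gpbrgq} it is exactly the paper's argument: you apply \Cref{thm:sheetimpc2alpha} on $B_{|x_0|/2}(x_0)$, where \eqref{eq:sheetgb} gives uniform sheeting, and you use \eqref{eq:flatrot1} together with $\omega(R^{-1})\to 0$. Your relabelling of radii, so that \eqref{eq:sheetgb} covers all of $\{u=0\}\setminus\overline{B_{R_0/2}}$, makes explicit a point the paper glosses over.

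The only real difference is nonemptiness. The paper reads it off the third bullet of \Cref{lem:addproperties} (the density of a far-out ball is close to the number $N$ of sheets crossing it) together with ${\bf M}_\infty>0$. You instead show that an empty zero set forces exponential decay of the energy density, hence ${\bf M}_\infty=0$. This is more self-contained, but two steps should be written out.

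First, a fixed sign of $u$ does not by itself rule out values in $(0,0.85]$ far away. Use the gradient bound in \eqref{eq:sheetgb}: start from a point $x$ with $|x|$ large and $0<u(x)\leq 0.85$, and follow the unit-speed curve along $-\nabla u/|\nabla u|$. Along it $u$ decreases at rate at least $1/C_1$, so it would vanish within distance $0.85\,C_1$, which is impossible. Hence $\{|u|\leq 0.85\}$ is bounded.

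Second, \Cref{lem:expdecay} as stated controls only the region $\{|x_n|\geq 2\delta R\}$ of a cylinder. It therefore has to be applied on off-centre cylinders avoiding this bounded set. Alternatively, replace it by a barrier argument for $1-u$, which satisfies $\Delta(1-u)\geq c\,(1-u)$ where $u\geq 0.85$. Either way you get $\frac{|\nabla u|^2}{2}+W(u)\leq Ce^{-c|x|}$ and hence ${\bf M}_R\to 0$.

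The paper's route is shorter because it reuses the sheet-counting it needs anyway (cf.\ \Cref{rmk:inftydens}). Yours avoids that machinery at the cost of these two elementary steps.
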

\begin{proof}
    By the implicit function theorem, which we can apply due to \eqref{eq:sheetgb}, $\Sigma$ is relatively closed in $\R^n\setminus \overline{B_{R_0/2}}$ and embedded.

    Moreover, since $u$ satisfies the sheeting assumptions (again by \eqref{eq:sheetgb}), \Cref{thm:sheetimpc2alpha} gives some $C_0$ such that the (second fundamental form and) mean curvature bounds in \eqref{eq:89tqrbnnbs} hold for $\Sigma$.

    The nonemptiness is immediate from, for example, a simple application of the third bullet in \Cref{lem:addproperties} together with the assumption ${\bf M}_\infty(u)>0$.

    Finally, \eqref{eq:1673gpbrgq} follows directly from \eqref{eq:flatrot1}.
\end{proof}

\begin{proof}[Proof of \Cref{prop:endgrowth}]
    If ${\bf M}_\infty(u)=0$ then $u\equiv\pm 1$, by the monotonicity formula, and the statement is trivial. Otherwise, by \Cref{lem:9017oqiubg} we can apply \Cref{prop:dec} to $\Sigma$. We deduce that the $\Sigma_i$ are a finite collection $\Sigma_1,...,\Sigma_N$, and there is some fixed $e\in\Sp^{n-1}$ such that the graphicality claim holds. Moreover, given $\alpha>0$ there is $C$ depending also on $\alpha$ such that
    \begin{equation*}
         |f_i(x')|+|x'||D f_i| \leq C|x'|^{\alpha}\,.
    \end{equation*}
    The second derivative bound then follows from this, via standard Schauder-type bootstrap estimates for the minimal graph equation plus the H\"older bound for $\mathrm{H}[f_i]$ given by \Cref{thm:sheetimpc2alpha} (see \eqref{eq:meancurvsheet}).
\end{proof}
\begin{remark}\label{rmk:inftydens}
    Combining \Cref{lem:addproperties} and \eqref{eq:nodensgaps} also shows that $N$ is finite; in fact, together with the sublinear growth in \eqref{eq:endgrowth}, they give that $N={\bf M}_\infty(u)$.
\end{remark}

\subsection{Improvement of gluing approximation}\label{sec:todaimprov}
In all of this section we assume that we are in the setting of \Cref{prop:endgrowth}. We remark that the constant $R_0>0$ will be made as large as needed and possibly changing from line to line.

{\bf Main result.}
This section is devoted to showing (see \cite{WW19,GuiWangWei2020} for the precise value of the universal constant $\kappa_0$):
\begin{proposition}[Toda system and stability]\label{prop:todaimprov}
The $\{f_i\}_{i=1}^N$ from \Cref{prop:endgrowth} solve the Toda-type system
\begin{equation}\label{eq:todaimprov}
\Delta f_{i} = \kappa_0 \left(e^{-\sqrt{2}(f_i - f_{i-1})} - e^{-\sqrt{2}(f_{i+1} - f_i)}\right) + O\left(|x'|^{-2 - \frac{1}{8}}\right)\quad\mbox{in}\quad \R^{n-1}\setminus \overline{B_{R_0}'}\,,
\end{equation}
where we interpret $e^{-\sqrt{2}(f_1 - f_{0})}\equiv0$ and $e^{-\sqrt{2}(f_{N+1} - f_{N})}\equiv 0$.

Moreover, they satisfy the following associated stability condition:
Let $i\in\{2,...,N\}$ and $\eta \in C_0^\infty(\R^{n-1}\setminus \overline{B_{R_0}'})$. Then,
\begin{equation}\label{eq:todastabimprov}
2\sqrt{2} \kappa_0 \int_{\R^{n-1}\setminus \overline{B_{R_0}'}} e^{-\sqrt{2}(f_i - f_{i-1})} \eta^2\, dx'\leq \left(1 + CR_0^{-\frac{1}{8}}\right) \int_{\R^{n-1}\setminus \overline{B_{R_0}'}} |\nabla \eta|^2\,dx' + C \int_{\R^{n-1}\setminus \overline{B_{R_0}'}} \eta^2 |x'|^{-2-\frac{1}{8}} \,dx'.
\end{equation}
\end{proposition}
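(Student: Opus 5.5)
The plan is to rerun the Wang--Wei reduction \cite{WW19} (in the form adapted in \cite{GuiWangWei2020}) on dyadic annuli $\{R\le|x'|\le 2R\}$, $R\geq R_0$. The new ingredient is that \eqref{eq:endgrowth} gives curvature decay $|D^2f_i|\lesssim R^{\alpha-2}$ rather than $R^{-1}$. Fix $\alpha$ small, say $\alpha=1/100$.

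\textbf{Approximate solution.} In Fermi coordinates $(y,z)$ around each $\Gamma_i={\rm graph}\,f_i$, define the Ansatz
\[
g_i(y,z)=g(\pm(z-h_i(y))),
\qquad
u^*=\sum_i g_i \ (\text{plus the usual constant corrections}),
\]
with alternating signs, and write $u=u^*+\phi$. Impose the orthogonality condition $\int \phi\, g_i'\,dz=0$ along each normal line; this fixes the small modulations $h_i$ by the implicit function theorem.

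\textbf{Estimates for $\phi$.} The error of the Ansatz consists of:
\begin{itemize}
\item curvature terms $H_i g_i'$ and $|{\rm II}|^2 z g_i'$, of size $O(|H_i|)+O(R^{2\alpha-4})$;
\item tangential derivatives of $h_i$;
\item interaction terms $e^{-\sqrt2 D_i}$, where $D_i$ is the distance to the neighbouring layers.
\end{itemize}
By the separation bound \eqref{eq:wwsepbnd}, the interactions are $O(R^{-1-\vartheta})$ a priori. Then the exponentially weighted $L^\infty$ and $C^{\vartheta}$ estimates for the linearised operator (from \cite{WW19}) give
\[
\|\phi\|\lesssim \|H\|+\sum_i e^{-\sqrt2 D_i}+R^{2\alpha-4}+\text{(tangential second derivatives of } h),
\]
with all the scaling-invariant $O(R^{-2})$ losses of \cite{WW19} replaced by $R^{2\alpha-4}$ or $R^{\alpha-2}\|\phi\|$-type products.

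\textbf{Toda system \eqref{eq:todaimprov}.} Project the equation for $\phi$ onto $g_i'$. This yields
\[
H_i=\kappa_0\bigl(e^{-\sqrt2 d_{i,i-1}}-e^{-\sqrt2 d_{i,i+1}}\bigr)+\text{quadratic errors}.
\]
The quadratic errors are bounded by $\|\phi\|^2$, by products of interactions with curvature, and by interaction terms of non-neighbouring layers. I then replace:
\begin{itemize}
\item $H_i$ by $\Delta f_i$, at cost $|Df|^2|D^2f|\lesssim R^{3\alpha-2}\cdot R^{-2+2\alpha}$, which is harmless;
\item Fermi distances by $f_{i+1}-f_i$, at cost $O(|Df|^2\,{\rm dist})$ in the exponent, i.e. a relative error $R^{2\alpha-2}\log R$;
\item $h_i$ by $0$.
\end{itemize}
To close the estimate I need an a priori bound $e^{-\sqrt2(f_{i+1}-f_i)}\lesssim R^{-2}$. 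This follows from the \cite{WW19} conclusion that the interactions are $O(\|H\|)$, combined with the mean curvature bound \eqref{eq:meancurvsheet} at scale $R$. All error terms are then $\lesssim R^{-2-\gamma}$ with $\gamma$ close to $1$ (e.g. $R^{-4+2\alpha}$, or $R^{-2}\cdot R^{-1-\vartheta+\dots}$). The exponent $1/8$ is a safe choice once the $C^\vartheta$ losses in the weighted norms are accounted for.

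\textbf{Stability \eqref{eq:todastabimprov}.} Test the second variation of $u$ in $\R^n\setminus\overline{B_{R_0}}$ with
\[
\varphi=\bigl(g_i'-g_{i-1}'\bigr)\eta(y)\ \ (\text{suitably signed}),
\]
i.e. the standard test function capturing the relative motion of the two adjacent layers. Compute $Q(\varphi,\varphi)$ layer by layer as in \cite{WW19}:
\begin{itemize}
\item the diagonal terms give $\int|\nabla\eta|^2$ times $\|g'\|^2_{L^2}$, up to a factor $1+O(|Df|^2)$;
\item the cross term between $g_i'$ and $g_{i-1}'$ gives exactly $-2\sqrt2\kappa_0\,e^{-\sqrt2(f_i-f_{i-1})}\eta^2$ after normalisation;
\item all remaining terms (curvature, $\phi$-terms, far interactions) are $\lesssim |x'|^{-2-1/8}\eta^2$, by the estimates above.
\end{itemize}
Stability, $Q\ge0$, rearranges to \eqref{eq:todastabimprov}, with $1+CR_0^{-1/8}$ absorbing the $O(|Df|^2)$ metric distortion.

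\textbf{Main obstacle.} The hard step is the improved estimate for $\phi$. One has to check that every scaling-invariant $O(R^{-2})$ term in \cite{WW19,WW18} actually comes from $|{\rm II}|^2$ or from $\nabla^2 h$, and hence improves under \eqref{eq:endgrowth}. I would first do this bookkeeping following the axially symmetric computations in \cite{GuiWangWei2020}, treating $|{\rm II}|\lesssim R^{\alpha-2}$ as the only curvature input.
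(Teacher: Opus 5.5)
The step where you ``replace $h_i$ by $0$'' is a genuine gap, and it is exactly where the paper's proof needs an additional ingredient.

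Projecting onto $g_i'$ does not give $H_i$ plus quadratic errors. The operator $\Delta_{i,z}$ acting on $g((-1)^i(z-h_i(y)))$ produces the \emph{linear} term $\Delta_{i,0}h_i$, so the projection gives $H_i+\Delta_{i,0}h_i=\kappa_0(e^{-\sqrt{2}D_i^-}-e^{-\sqrt{2}D_i^+})+Err_i$. A priori $h_i$ is controlled only through $u=0$ on $\Gamma_i$. This gives $\|h_i\|_{C^{2,1/2}}\lesssim\|\phi\|_{C^{2,1/2}}+M\lesssim|y|^{-2}$, and the bound does \emph{not} improve under \eqref{eq:endgrowth}. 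The reason is that $\phi$ is forced by the interaction term $\mathcal I$, which is only known to be of size $M=O(|y|^{-2})$, however flat the layers are. So $\Delta_{i,0}h_i$ is of the same order as the Toda terms. The premise of your ``main obstacle'' paragraph, that $\nabla^2 h$ improves together with $|\mathrm{II}|^2$, is therefore false.

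What is missing is a separate estimate on horizontal derivatives. \cite[Proposition 7.1]{WW19} gives $\|\nabla_y\phi\|_{C^{1,1/2}}\lesssim|y|^{-2-1/7}$. The refined tangential bound in \Cref{lem:72gioubo}, which loses only $R^{-1/6}M(R)$, transfers this to $\|\nabla_y h_i\|_{C^{1,1/2}}\lesssim|y|^{-2-1/7}$, and only then is $\Delta_{i,0}h_i$ of lower order. This step is also what caps the gain at $1/8$; errors of size $R^{-2-\gamma}$ with $\gamma$ close to $1$ are not available.

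The same oversight affects your stability computation. With $\varphi=(g_i'+g_{i-1}')\eta$ (both bumps of the same sign), the terms linear in $\phi$, such as $\int W'''(g_i)(g_i')^2\phi\,\eta^2$, are of size $\|\phi\|\,\eta^2\sim|y|^{-2}\eta^2$. That is the same order as the interaction, so they cannot be placed in the $C\int\eta^2|x'|^{-2-1/8}$ remainder; they must be evaluated. With $L_i:=-\partial_{zz}+W''(g_i)$ one has $L_ig_i''=-W'''(g_i)(g_i')^2$. The equation for $\phi$ then gives $\int W'''(g_i)(g_i')^2\phi\,dz=\int g_i''\,\mathcal I\,dz$ up to lower-order terms, and this is a nonzero combination of $e^{-\sqrt{2}D_i^\pm}$.

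These $\phi$-terms, together with the self-interaction part of the diagonal terms $\int(W''(g_*)-W''(g_i))(g_i')^2$, contribute to the coefficient of the interaction. The cross term between $g_i'$ and $g_{i-1}'$ alone does not produce $2\sqrt{2}\kappa_0$. Obtaining this constant exactly is what \cite[Proposition 8.1]{WW19} does, and the paper reruns that computation with the improved bounds on $\phi$, $h_i$ and the curvature.
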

{\bf Improved curvature estimates.} Let $\mathrm{II}_i$ denote the second fundamental form of $\Gamma_i:={\rm graph}\, f_i$.
From \Cref{prop:endgrowth}, it follows in particular\footnote{In fact, we get that $|\mathrm{II}_i(x')| \leq C_\alpha|x'|^{-2+\alpha}$ for any $\alpha>0$, but we work with the fixed power $|x'|^{-3/2}$ in what follows for concreteness.} that
\begin{equation}\label{eq:iashgabg}
|\mathrm{II}_i(x')| \leq C|x'|^{-3/2} \quad \forall |x'| \geq R_0\,,
\end{equation}
In other words, we have improved the Wang--Wei curvature bound (which had a power $-1$) to a stronger decay, breaking scaling invariance. In fact, we can upgrade this to the same bound for the full A--C second fundamental form and higher derivatives as well:
\begin{lemma}\label{lem:2ffdec} We have
$\mathcal A+|\nabla \mathcal A|\leq C|x|^{-3/2}$ in $\{|u|\leq 0.9\}\setminus\overline{B_{R_0}}$, and in particular
$|\mathrm{II}_i|+|\nabla \mathrm{II}_i|\leq C|x|^{-3/2}$.
\end{lemma}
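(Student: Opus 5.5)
We have to show that $\mathcal A+|\nabla\mathcal A|\leq C|x|^{-3/2}$ holds on $\{|u|\leq 0.9\}$ away from a large ball. By \eqref{eq:endgrowth} we already know this for the zero level set $\Gamma_i={\rm graph}\,f_i$. The plan is to transfer the bound to the whole transition region through local blow-ups at unit scale, combined with the sheet separation of \Cref{thm:sheetimpc2alpha}.

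Fix $x_0\in\{|u|\leq 0.9\}$ with $|x_0|=R$ large, and work in the ball $B_{R/4}(x_0)$, where $u$ is stable and satisfies the sheeting assumptions. Two facts are available there. First, \eqref{eq:wwsepbnd} separates distinct layers by at least $c\log R$. Second, \Cref{lem:expdecay} makes $\{|u|\leq 0.9\}$ lie within distance $C$ of $\Gamma:=\bigcup_i\Gamma_i$. Near $x_0$ we use Fermi coordinates $(y,z)$ adapted to the nearest sheet $\Gamma_i$, with $y\in\Gamma_i$ and $z$ the signed distance.

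\textbf{Step 1 (error equation).} Write $u=g(z-h(y))+\phi$, where the phase $h$ is bounded and $\phi$ is small. Since $u$ is $C^3$-close to a 1D profile in the slab $|z|\leq C$, one can choose $h$ so that $h(y)$ is the zero of $u$ along the normal; then $h\equiv0$ and $\phi$ vanishes on $z=0$. Substituting into Allen--Cahn and using $g''=W'(g)$ gives
\begin{equation*}
-\Delta_z\phi-\Delta_{\Gamma_{i,z}}\phi+W''(g)\phi=-H_{\Gamma_{i,z}}\,g'+\mathcal N(\phi)+O\bigl(e^{-c\log R}\bigr),
\end{equation*}
where the last term collects the tails of the other sheets. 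By \eqref{eq:iashgabg} the mean curvatures $H_{\Gamma_{i,z}}$ of the parallel surfaces are $O(|x|^{-3/2})$. The interaction tails are $O(R^{-\sqrt2\,c})$, and \eqref{eq:wwsepbnd} with $\vartheta$ close to $1$ gives an exponent close to $2$, so they are also $O(R^{-3/2})$. Standard estimates for the linearised operator $-\partial_z^2+W''(g)$ on the orthogonal complement of $g'$ give $\|\phi\|_{C^{2,\gamma}}\leq CR^{-3/2}$ on unit balls. The orthogonality condition is supplied by the normalization $\phi(y,0)=0$ together with a Lyapunov--Schmidt reduction, as in \cite{WW19}.

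\textbf{Step 2 (transfer to $\mathcal A$).} In Fermi coordinates we have $\nabla u=g'(z)\nu+\nabla\phi$. The Hessian is $D^2u=g''\,\nu\otimes\nu+g'\,D\nu+D^2\phi$, where $|D\nu|\lesssim|\mathrm{II}_i|\leq CR^{-3/2}$. Since $|\nabla u|\geq 1/C_1$, the quantity $|D^2u|^2-|\nabla|\nabla u||^2$ kills the $\nu\otimes\nu$ component up to the perturbation. This gives $\mathcal A^2\lesssim |\mathrm{II}_i|^2+|D^2\phi|^2+|\nabla\phi|^2$, hence $\mathcal A\leq CR^{-3/2}$.

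\textbf{Step 3 (higher derivatives).} For $|\nabla\mathcal A|$ we need the same bound one derivative higher. This requires $|\nabla\mathrm{II}_i|\lesssim R^{-3/2}$, which interior Schauder estimates applied to the mean curvature equation provide. They use the $C^\vartheta$ bound on $\mathrm H[f_i]$ from \eqref{eq:meancurvsheet} and the decay \eqref{eq:endgrowth} on balls of radius $R/2$, giving in fact $R^{-2+\alpha}$. The $C^{3,\gamma}$ bound on $\phi$ then follows by differentiating its equation.

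The main obstacle is Step 1: the interaction error has to be genuinely $O(R^{-3/2})$, not merely $O(R^{-1})$. If the separation from \eqref{eq:wwsepbnd} is insufficient, I would instead argue by contradiction and compactness. Rescale $R^{3/2}(\cdot)$ and pass to a 1D limit, where the linearised equation has trivial kernel modulo translations, which the normalization $\phi(y,0)=0$ removes.
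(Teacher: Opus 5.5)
Your Steps 1--2 are essentially the first half of the paper's proof. You expand $\nabla u$ and $D^2u$ in Fermi coordinates around $\Gamma_i$, observe that the only term in $D^2u$ that is $O(|x|^{-1})$ in the scale-invariant theory is $g'D\nu$, and use $|D\nu|\lesssim|\mathrm{II}_i|\leq C|x|^{-3/2}$. The ``main obstacle'' you flag in Step 1 is not one. Under the sheeting assumptions alone, \cite{WW19} already gives $\|\phi\|_{C^{2,\alpha}}\leq C|x|^{-2}$ for the Ansatz error, and the paper simply quotes this. Your gauge $h\equiv0$, $\phi(y,0)=0$ does not by itself yield an invertible linear problem: the forcing $-Hg'$ is not orthogonal to $g'$, and balancing it is exactly the Toda reduction. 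So Step 1 is really a citation of \cite{WW19}, not an independent argument. Also, \eqref{eq:endgrowth} only gives the bound on $|\mathrm{II}_i|$, not on $\mathcal A$ or any derivative.

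The genuine gap is Step 3. Interior Schauder estimates for the mean curvature equation with the $C^\vartheta$ bound on $\mathrm{H}[f_i]$ from \eqref{eq:meancurvsheet} control $f_i$ only in $C^{2,\vartheta}$, i.e.\ the H\"older seminorm of $D^2f_i$. They give no pointwise bound on $D^3f_i$, hence none on $|\nabla\mathrm{II}_i|$ at any decay rate. Reaching third derivatives that way would need a quantitative $C^{1,\vartheta}$ bound on $\mathrm{H}[f_i]$, which is not among the available estimates. Your follow-up, a $C^{3,\gamma}$ bound on $\phi$ obtained by differentiating its equation, is circular: the tangential derivative of the forcing $H_{\Gamma_{i,z}}g'$ is again $\nabla\mathrm{II}_i$. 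Interpolating against uniform $C^k$ bounds on $u$ does not help either, since it loses half the decay.

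The missing ingredient is an elliptic equation for the level-set geometry itself. With $\nu=\nabla u/|\nabla u|$ and $B=\nabla\nu$ (so that $|B|=\mathcal A$), one has $-\operatorname{div}(|\nabla u|^2\nabla\nu)=|\nabla u|^2|\nabla\nu|^2\nu$. Differentiating gives a Simons-type equation $-\operatorname{div}(|\nabla u|^2\nabla B)=F$ with $|F|\lesssim|B|$, because $|\nabla u|$ is bounded above and below on the layers and all derivatives of $u$ are uniformly bounded. Interior gradient estimates on balls of fixed radius then turn $\sup|B|\leq C|x|^{-3/2}$ (your Step 2) into $|\nabla B|\leq C|x|^{-3/2}$. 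This bounds $|\nabla\mathcal A|$, and $|\nabla\mathrm{II}|\leq|\nabla B|+|\mathrm{II}||B|$ then gives the bound on $\nabla\mathrm{II}_i$. This runs in the opposite order to your plan: the derivative bound comes from the PDE for $B$ on the whole transition region, not from the graph equation for $f_i$.
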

Here $\nabla \mathcal A$ denotes the usual gradient of the function $\mathcal A$ (recall \eqref{eq:AC2formdef}), and $\nabla \mathrm{II}_i$ denotes the intrinsic covariant derivative of the tensor $\mathrm{II}_i$ over the hypersurface $\Gamma_i$ where it is defined.
\begin{proof} 
\noindent {\bf Step 1.} Allen--Cahn second fundamental form.\\
    We first upgrade \eqref{eq:iashgabg} to a bound on the A--C second fundamental form, i.e.
    \begin{equation}\label{eq:ehqbqbajf}
        \mathcal A\leq C|x|^{-3/2}\quad\mbox{in}\quad \{|u|\leq 0.9\}\setminus \overline{B_{R_0}}\,.
    \end{equation}
    Let $x\in \{|u|\leq 0.9\}\setminus \overline{B_{R_0}}$, and let $R=|x|/2$.  We argue in the fixed ball $B_R(x)$, in which---up to making $R_0$ larger--- we know \eqref{eq:iashgabg}.
    
    Now, by the estimates in \cite{WW19} (using \textbf{only} the bound \eqref{eq:sheetgb}, i.e. the sheeting assumptions, for now) we know that $u=g_*+\phi$, where $g_*$ is an Ansatz approximate solution and 
    $$\|\phi\|_{C^{2,\alpha}}\leq C|x|^{-2}\,.$$
    
    We can now compute $\nabla u$ and $\nabla^2 u$ on the transition layers $\{|u|\leq 0.9\}$, by arguing as in\footnote{We follow the arXiv version here instead of \cite{WW18} since it contains full details for this argument.} the two-dimensional case in \cite[page 45]{WW17arXiv}. To be precise, consider Fermi coordinates with respect to the layers $\Gamma_i$, exactly as in \cite{WW17arXiv}, so that $y_k$ (with $k=1,...,n-1$) parametrise one of the $\Gamma_i$, and $z$ parametrises the normal direction to $\Gamma_i$ instead---we note that, for convenience of the reader, the entire Fermi coordinate setting will be reworked and reviewed in full detail after the proof of this Lemma. Up to considering $\frac{\partial}{\partial y_k}$ for $k=1,...,n-1$ in place of $\frac{\partial}{\partial y}$ (since we are working in arbitrary dimension) in the computation in \cite[page 45]{WW17arXiv}, we get
    \begin{equation*}
        \nabla u = (-1)^\alpha g_i' \frac{\partial}{\partial z} +O(|x|^{-2})\quad\mbox{and}\quad \nabla^2 u =g_i'' \frac{\partial}{\partial z}\otimes\frac{\partial}{\partial z} +(-1)^i g_i' \nabla\frac{\partial}{\partial z} +O(|x|^{-2})\,,
    \end{equation*}
    where $g_i'$ and $g_i''$ are suitable translations of the derivatives of the one-dimensional Allen--Cahn solution $g$, essentially centered at the $\Gamma_i$.

    Now, thanks to the improved bound in \eqref{eq:iashgabg}, with the corresponding notation from \cite{WW17arXiv} we can estimate
    $$
    |\mathrm{II}_i(y,z)|\leq C|\mathrm{II}_i(y,0)|\leq C|x|^{-3/2} \qquad\mbox{and}\qquad \left|\nabla\frac{\partial}{\partial z}\right|\leq C |\mathrm{II}_i(y,z)|\leq C|x|^{-3/2}\,.
    $$
    This is the only real modification with respect to the argument in \cite{WW17arXiv}, which just uses the scale-invariant estimate $|\mathrm{II}_i(y,0)|\leq C|y|^{-1}$ instead.
    
    We then find
    \begin{equation*}
        \nabla^2 u = g_i'' \frac{\partial}{\partial z}\otimes\frac{\partial}{\partial z} +O(|x|^{-3/2})\,,
    \end{equation*}
    which by the above leads to
    \begin{equation*}
        \mathcal A^2=\frac{|\nabla^2 u|^2-|\nabla|\nabla u||^2}{|\nabla u|^2}=O(|x|^{-3})
    \end{equation*}
    as desired.
    
    {\bf Step 2.} Higher order bound.\\
    We argue similarly to \cite[Lemma 8.1]{WW18}. By \eqref{eq:sheetgb}, \( \nu = \frac{\nabla u}{|\nabla u|} \) is well defined and smooth in \( \{ |u| \leq 0.9 \} \setminus \overline{B_{R_0}}\). Define \( B := \nabla \nu \), i.e. the differential matrix of the vector field $\nu$; with this notation, we have then $|B|=\mathcal A$.

We want to find an equation satisfied by $B$, in order to upgrade the estimate from \eqref{eq:ehqbqbajf} to higher derivatives as well. Now, by direct calculation, we have
\begin{equation}\label{eq:div2fundf}
    -\operatorname{div}(|\nabla u|^2 \nabla \nu) = |\nabla u|^2 |\nabla \nu|^2 \nu\,.
\end{equation}
Differentiating the equation gives the following Simons-type equation:
\begin{equation}\label{eq:simeq2ff}
-\operatorname{div}(|\nabla u|^2 \nabla B) = |\nabla u|^2 |B|^2 B + |\nabla u|^2 \nabla |B|^2 \otimes \nu + |B|^2 \nabla |\nabla u|^2 \otimes \nu + |\nabla u|^2 \nabla^2 \log |\nabla u|^2 \cdot B.    
\end{equation}
Now, observe that $|\nabla u|$ has positive lower and upper bounds in $\{|u|\leq 0.9\}\setminus \overline{B_{R_0}}$, as well as uniformly bounded derivatives of any order. We immediately see then that all the terms in the right hand side of \eqref{eq:simeq2ff} can be bounded (in norm) by $O(|B|)=O(\mathcal A)$, thus (by \eqref{eq:ehqbqbajf}) we have
\begin{equation}\label{eq:simeq2ff2}
|\operatorname{div}(|\nabla u|^2 \nabla B)| \leq C|x|^{-3/2}\quad\mbox{in}\quad \{|u|\leq 0.9\}\setminus \overline{B_{R_0}}\,.    
\end{equation}
Combining \eqref{eq:ehqbqbajf} and \eqref{eq:simeq2ff2}, standard interior gradient estimates---on balls of small but fixed radius---give then that
\begin{equation}\label{eq:simeq2ff3}
|\nabla B| \leq C|x|^{-3/2}\quad\mbox{in}\quad \{u=0\}\setminus \overline{B_{2R_0}}
\end{equation}
as well, up to making $R_0$ large enough.

Finally, to bound $|\nabla \mathrm{II}|$ instead, it suffices to realise that
\begin{equation}\label{eq:719475giuog}
    |\nabla \mathrm{II}|\leq |\nabla B|+|\mathrm{II}||B|\,,
\end{equation}
and then combine this with \eqref{eq:simeq2ff3} and the bound from Step 1.

To be precise, here $\nabla \mathrm{II}$ denotes the intrinsic covariant derivative of the tensor $\mathrm{II}$ over a level set, and $\nabla B$ denotes the Euclidean differential of $B$, just as before.  One has then---as long as $\nabla u$ does not vanish---the formula
$$
(\nabla_Z \mathrm{II})(X,Y)=(\nabla_Z B)(X,Y)+\mathrm{II}(Z,X)B(\nu,Y)\,,
$$
which is simple to check from the standard definition of covariant derivative of a tensor, and then \eqref{eq:719475giuog} follows.
\end{proof}

{\bf Fermi coordinates.} We introduce Fermi coordinates with respect to \(\Gamma_i\). It will be convenient to follow the notation in \cite{WW19} for them; hence, in the present section, \((y, z)\) with $y\in\R^{n-1}$ and $z\in\R$ will always denote Fermi coordinates with respect to \(\Gamma_i\). Given \(x\in\R^n\) in a neighborhood of \(\Gamma_i\), it is written in these coordinates as \((y, z)\) precisely if:
\begin{itemize}
    \item $y\in\R^{n-1}$ is such that \((y, f_i(y)) \in \Gamma_i\) is the nearest point projection of \(x\). Notice that this is different from the previously defined $x'$, which we recall denotes the projection of any $x\in\R^n$ onto the first $n-1$ coordinates.
    \item \(z\) is the signed distance of \(x\) to \(\Gamma_i\), with positive sign above \(\Gamma_i\).
\end{itemize}
Now, since \(\Gamma_i\) is the graph of \(f_i\), we will use \(y \mapsto (y, f_i(y))\) as a parametrisation of \(\Gamma_i\). The upward unit normal vector of \(\Gamma_i\) at \((y, f_i(y))\) is
$$
N_i(y) := \frac{1}{\sqrt{1 + |\nabla f_i(y)|^2}} \left(-\nabla f_i(y),\, 1 \right).
$$

Perhaps more concisely, but equivalently, the correspondence between $x$ and $(y,z)$ is then just $x=(y, f_i(y))+zN_i(y)$.

Now, simply by \eqref{eq:iashgabg}, i.e. the curvature estimates, these coordinates are well defined in the open set \(\{(y, z) : |z| < c_F |y|,\ |y| > R_0\}\) for some \(c_F > 0\).

For each \(z\), \(\Gamma_i^z\) will denote the smooth hypersurface consisting of all points with signed distance to \(\Gamma_i\) identical to \(z\). By slight abuse of notation, we denote its second fundamental form by $\mathrm{II}_i(z,t)$, and its mean curvature by $H_i(z,t)$. Arguing as in \cite[p. 11-13]{WW19}, we have:
\begin{itemize}
    \item {\bf Mean curvature expansion:}
    $$
        H_i(y, z) = H_i(y,0) + O\left(|z||\mathrm{II}_i|^{2}(y,0)\right) = H_i(y,0) + O\left(|z||y|^{-3}\right).
    $$
    \item {\bf Laplacian in Fermi coordinates:} Denoting by \(\Delta_{i,z}\) the Laplace(-Beltrami) operator over \(\Gamma_i^z\),
    \begin{equation*}
    \Delta = \Delta_{i,z} - H_i(y, z)\partial_z + \partial_{zz}.
    \end{equation*}
    \item {\bf Laplacian over $\Gamma_i^z$:} Let $\varphi$ be a $C^2$ function of $y$ only; then
\begin{equation}\label{eq:47t14gtlvag}
|\Delta_{i,z} \varphi(y) - \Delta_{i,0} \varphi(y)| \leq C|z||\mathrm{II}_i|  \left( |\nabla^2 \varphi(y)| + |\nabla \varphi(y)| \right)\leq C |y|^{-3/2} |z| \left( |\nabla^2 \varphi(y)| + |\nabla \varphi(y)| \right).
\end{equation}
\end{itemize}
{\bf Notation for distances}
\begin{itemize}
    \item Given $x$ with $|x'|> R$, let $d_i$ denote the distance to $\Gamma_{i}$.
    \item We set $\mathcal M_i:=\{|d_i|<|d_{i-1}|\quad\mbox{and}\quad |d_i|<|d_{i+1}|\}$.
\end{itemize}
Moreover,
\begin{itemize}
  \item For $|y| > R$, let $D_i^\pm(y)$ be the distance of $(y, f_i(y))\in\Gamma_i$ to $\Gamma_{i\pm 1}$ respectively.
  \item Denote $D_i(y) := \min\{D_i^+(y), D_i^-(y)\}$.
  \item Set $M(y) := \max_{i} \max_{|\widetilde y| \geq |y|} e^{-\sqrt{2} D_i(\widetilde y)}$.
\end{itemize}

By the bounds on $f_i$ and $\nabla f_i$ in \eqref{eq:endgrowth}, $\Gamma_i$ and $\Gamma_{i+1}$ are rather close in height and almost parallel. By a simple comparison, see for instance \cite[Lemma 8.3]{WW18}, we get:
\begin{lemma}\label{lem:82gwgbogg}
For any $|x| > R_0$,
$$
\left\{
\begin{aligned}
D_i^+(y) &= f_{i+1}(y) - f_i(y) + O(|y|^{-1/6}), \\
D_i^-(y) &= f_i(y) - f_{i-1}(y) + O(|y|^{-1/6}).
\end{aligned}
\right.
$$
\end{lemma}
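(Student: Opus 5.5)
To prove \Cref{lem:82gwgbogg} I will compare the Euclidean distance from the point $p=(y,f_i(y))\in\Gamma_i$ to the graph $\Gamma_{i+1}$ with the vertical gap $h:=f_{i+1}(y)-f_i(y)$. The case of $D_i^-$ is symmetric. Throughout, two inputs from \eqref{eq:endgrowth} are used: the bounds $|f_j(y)|\le C|y|^{\alpha}$ and $|\nabla f_j(y)|\le C|y|^{\alpha-1}$, with $\alpha>0$ as small as we like (say $\alpha=1/12$), and the sharper second derivative bound $|D^2f_j|\le C|y|^{\alpha-2}$.

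The upper bound is immediate. The point $(y,f_{i+1}(y))$ lies on $\Gamma_{i+1}$ at distance $h$ from $p$, so $D_i^+(y)\le h$.

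For the lower bound, I first localise. Let $q=(\tilde y,f_{i+1}(\tilde y))$ be a nearest point of $\Gamma_{i+1}$ to $p$. Since $|p-q|\le h\le C|y|^{\alpha}\ll|y|$, we have $|\tilde y-y|\le h$, and $\tilde y$ lies in the region where the graphical description holds. Set $w:=\tilde y-y$. Taylor-expanding $f_{i+1}$ at $y$ gives
$$|p-q|^2=|w|^2+\big(h+\nabla f_{i+1}(y)\cdot w+O(|y|^{\alpha-2}|w|^2)\big)^2 .$$
I then minimise the right-hand side over $|w|\le h$. The quadratic form $|w|^2+(h+a\cdot w)^2$, with $a=\nabla f_{i+1}(y)$, has minimum exactly $h^2/(1+|a|^2)$. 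Hence
$$D_i^+(y)\ge \frac{h}{\sqrt{1+|a|^2}}-O(|y|^{\alpha-2}h^2)=h-O(h|a|^2)-O(|y|^{\alpha-2}h^2).$$
Using $h\le C|y|^\alpha$ and $|a|\le C|y|^{\alpha-1}$, the error is $O(|y|^{3\alpha-2})$. For small $\alpha$ this is far better than the required $O(|y|^{-1/6})$.

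The only genuinely delicate point is justifying the localisation: the nearest point on $\Gamma_{i+1}$ must come from the same graph, over a nearby base point $\tilde y$, and not from some far-away portion of the surface. This follows because the entire set $\{u=0\}$ outside the cylinder $\overline{B_{R_0}'}\times[-R_0,R_0]$ is the union of the graphs (\Cref{prop:endgrowth}), together with the trivial bound $|p-q|\le h=o(|y|)$. Alternatively, one could invoke the comparison argument of \cite[Lemma 8.3]{WW18} directly.

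Note that the $O(|y|^{-1/6})$ stated in the lemma is weaker than what this argument produces. It even survives if one uses only the gradient bound $|\nabla f|\lesssim|y|^{\alpha-1}$ and the crude second-order Taylor error, so no step is expected to be a real obstacle.
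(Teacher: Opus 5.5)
Your argument is correct and is an explicit version of the paper's proof. The paper only invokes a ``simple comparison'' (citing \cite[Lemma 8.3]{WW18}), based on the height and gradient bounds in \eqref{eq:endgrowth} making $\Gamma_i$ and $\Gamma_{i+1}$ close and almost parallel. The localisation you flag is automatic: since $\Gamma_{i+1}={\rm graph}\,f_{i+1}$, any competitor $q$ has the form $(\tilde y,f_{i+1}(\tilde y))$ with $|\tilde y-y|\le|p-q|\le h$. Also, the mean value theorem alone, $|p-q|\ge h-C|y|^{\alpha-1}|\tilde y-y|$, already gives an error $O(|y|^{2\alpha-1})$, so neither the $D^2f$ bound nor the quadratic minimisation is needed.
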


\noindent {\bf Optimal approximation}

This is taken exactly from \cite{GuiWangWei2020}.\\
Recall the definition of $g$ in \eqref{eq:1dsoldef}; we need a truncated version. Fix a cutoff function $\zeta \in C_c^\infty(-2,2)$, with $\zeta \equiv 1$ in $(-1,1)$ and $|\zeta'| + |\zeta''| \leq 16$.  For all $|y|$ large, let (to ease notation, dependence on $|y|$ will be omitted when convenient)
$$
\bar{g}(t) = \zeta(8 (\log |y|) t) g(t) + [1 - \zeta(8 (\log |y|) t)] \operatorname{sgn}(t), \quad t \in (-\infty, +\infty).
$$
In particular, $\bar{g} \equiv 1$ in $(16 \log |y|, +\infty)$ and $\bar{g} \equiv -1$ in $(-\infty, -16 \log |y|)$.

Then, $\bar{g}$ is an approximate solution to the one-dimensional Allen–Cahn equation, in the sense that
\begin{equation*}
\bar{g}''(t) = W'(\bar{g}(t)) + \bar{\xi}(t)\,,
\end{equation*}
where $\operatorname{spt}(\bar{\xi}) \subset \{8 \log |y| < |t| < 16 \log |y|\}$, and $|\bar{\xi}| + |\bar{\xi}'| + |\bar{\xi}''| \leq C|y|^{-4}$. 

In the following we assume for convenience that $u$ has the same sign as $(-1)^i$ between $\Gamma_i$ and $\Gamma_{i+1}$. Exactly as in \cite[Lemma 4.3]{GuiWangWei2020}, we have:
\begin{lemma}
\label{lem:914gotb2jk}
For any $y\in\R^{n-1}$ with $|y| > R_0$ (perhaps after enlarging $R_0$) and $i\in\{1,...,N\}$, there exists a unique $h_i(y):\R\to\R$ such that in the Fermi coordinates with respect to $\Gamma_i$,
$$
\int_{-\infty}^{+\infty} [u(y, z) - g_*(y, z)] \bar{g}'(z - h_i(y)) \, dz = 0,
$$
where for each $i$, in $\mathcal{M}_i$ we define
$$
g_*(y, z) := g_i + \sum_{j=1}^{i-1} \left[g_j - (-1)^j \right] + \sum_{j=i+1}^N \left[g_j + (-1)^jj \right],
$$
and in the Fermi coordinates $(y, t)$ with respect to $\Gamma_j$,
$$
g_j(y, z) := \bar{g}((-1)^j [z - h_j(y)]).
$$

Moreover, for any $i\in\{1,...,N\}$, we have
$$
\lim_{|y| \to +\infty} \left( |h_i(y)| + |\nabla h_i(y)| + |\nabla^2 h_i(y)| + |\nabla^3 h_i(y)| \right) = 0;
$$
likewise, defining the \textit{approximation error} $\phi := u - g_*$, we have
$$
\lim_{|y| \to +\infty} \sup_{z}\left(|\phi_i(y,z)|+|\nabla \phi_i(y,z)|+|\nabla^2 \phi_i(y,z)|+|\nabla^3 \phi_i(y,z)|\right) = 0;
$$
\end{lemma}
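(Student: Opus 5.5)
The proof is a fixed-point / implicit function argument in the single scalar variable $h_i(y)$, carried out separately for each $|y|>R_0$ and each $i$. It follows \cite[Lemma 4.3]{GuiWangWei2020} (and \cite[Section 8]{WW18}), with the a priori information supplied by \Cref{prop:endgrowth} and \Cref{lem:2ffdec}.

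\textbf{Step 1: the reference profile.} Before introducing any $h_j$, I would set $h_j\equiv 0$ for all $j$ and compare $u$ with the resulting profile $g_*^0$. By the sheeting assumptions \eqref{eq:sheetgb}, the Exponential decay lemma (\Cref{lem:expdecay}), and the flatness with curvature decay $|\mathrm{II}_i|\lesssim|y|^{-3/2}$, blow-up and compactness arguments give
\[
u(y,z)-\bar g\big((-1)^i z\big)\to 0 \quad\text{in } C^3 \text{ as } |y|\to\infty,
\]
uniformly in $z$ in the region $\mathcal M_i$. The argument is by contradiction. Translates of $u$ centred at points of $\Gamma_i$ far out converge to an entire solution whose zero set is a union of flat parallel hyperplanes; this follows from the decay of $\mathrm{II}$ and from the separation \eqref{eq:wwsepbnd}, which gives $D_i\to\infty$. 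A solution whose zero set contains a hyperplane with the others sent to infinity, together with $|\nabla u|\ge 1/C_1$, must be the 1D solution $g$, by the Modica equality case or by Wang's Allard theorem (\Cref{thm:ACallard}) applied to the density-one limit. The same argument controls the other layers' contributions $g_j\mp 1$ inside $\mathcal M_i$, since they are exponentially small in $D_i$.

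\textbf{Step 2: solving for the $h_j$.} Define the map
\[
F_y(h_1,\dots,h_N)_i=\int\big(u-g_*\big)\,\bar g'(z-h_i)\,dz .
\]
Its derivative in $h_i$ is $\approx(-1)^{i}\int (g')^2\neq 0$. The cross derivatives in $h_j$, $j\neq i$, are bounded by $e^{-cD_i}\to0$. So the Jacobian is a small perturbation of a diagonal invertible matrix, uniformly for large $|y|$. Combined with $F_y(0)\to0$ from Step 1, the contraction mapping / implicit function theorem gives a unique small solution $h(y)$, and $|h(y)|\to 0$. Uniqueness in a neighbourhood of $0$ is the meaning of "unique" in the statement. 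Globally, the non-degeneracy forbids other small zeros, while large $h$ is incompatible with $\{u=0\}=\Gamma_i$.

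\textbf{Step 3: derivative bounds.} Differentiating the identity $F_y(h(y))=0$ in $y$ gives $\nabla^k h$ in terms of $y$-derivatives of $u$ and $g_*$ in Fermi coordinates. These derivatives tend to zero: on the $u$ side by Step 1 in $C^3$ together with \Cref{lem:2ffdec}, and on the $g_*$ side because the Fermi metric derivatives involve $\mathrm{II}_i$, $\nabla\mathrm{II}_i$ and $\nabla^2 f_i$, all decaying. Then $\phi=u-g_*\to0$ in $C^3$ by the triangle inequality, using Step 1 and $h\to0$ in $C^3$. The cutoff error in $\bar g$ is $O(|y|^{-4})$ and is harmless.

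\textbf{Main obstacle.} I expect the third-derivative control to be the hardest part. It needs $C^3$ convergence of $u$ to the 1D profile, which in turn requires a bound on $\nabla^2\mathrm{II}_i$ that \Cref{lem:2ffdec} does not state. I would first try to obtain it by interior Schauder estimates for the Simons-type equation \eqref{eq:simeq2ff}, bootstrapped once more. The alternative is to use only qualitative smooth convergence of blow-ups, which is enough since the statement asks only for limits zero.
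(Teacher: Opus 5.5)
Your proposal is correct and follows the same route as the paper, which simply imports \cite[Lemma 4.3]{GuiWangWei2020}. That route is: smooth convergence of $u$ to the one-dimensional profile along each end, then an implicit-function/contraction argument whose linearisation is diagonally dominant (diagonal entries $\approx \pm\int (g')^2$, off-diagonal entries $O(e^{-cD_i})$), and qualitative $C^k$ convergence of blow-ups for the derivative bounds, which indeed disposes of your "main obstacle". The one point to tidy is that along the normal line to $\Gamma_i$ the terms $g_j$ depend on $h_j$ at the nearby projection points onto $\Gamma_j$, not at $y$ itself, so Step 2 should be run as a fixed point for the functions $(h_1,\dots,h_N)$ on $\{|y|>R\}$ rather than pointwise in $y$; your estimates carry over unchanged.
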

In Fermi coordinates with respect to $\Gamma_i$, the equation for $\phi$ can be written as
\begin{equation*}
\begin{aligned}
\Delta_{i,z}\phi- H_i(y,z)\partial_z\phi + \partial_{zz}\phi &= W''(g_*)\phi+\mathcal{N}(\phi)+ \mathcal{I}+ (-1)^i g_i' \mathcal{R}_{i,1}- g_i'' \mathcal{R}_{i,2} \\
&\quad+\sum_{j \neq i} [(-1)^j g_j'\mathcal{R}_{j,1}-g_j'' \mathcal{R}_{j,2}]-\sum_j \xi_j,
\end{aligned}
\end{equation*}
where
$$
\mathcal{N}(\phi)=W'(g_* + \phi)-W'(g_*)-W''(g_*)\phi=O(\phi^2)
$$
and
$$
\mathcal{I}=W'(g_*)-\sum_j W'(g_j)\,,
$$
while for each $j$, in the Fermi coordinates with respect to $\Gamma_j$,
$$
\xi_j(y, z) = \bar{\xi}((-1)^j(z - h_j(y)))\,,
$$
and
$$
\mathcal{R}_{j,1}(y, z):=H_j(y, z) + \Delta_{j,z} h_j(y)\,, \quad 
\mathcal{R}_{j,2}(y, z):=|\nabla_j h_j(y)|^2\,.
$$

As in \cite[Lemma 4.6]{WW19}, since $u = 0$ on $\Gamma_i$, $h_i$ can be controlled by $\phi$ in the following way.

\begin{lemma}\label{lem:72gioubo}
For each $i$ and $R > R_0$, we have
\begin{equation*}
\|h_i\|_{C^{2,1/2}(\{|y|>R\})} \leq C \left(\|\phi\|_{C^{2,1/2}(\{|y|>R\})} + M(R)\right),
\end{equation*}
as well as the improved tangential derivative bound
\begin{equation*}
\|\nabla_{y} h_i\|_{C^{1,1/2}(\{|y|>R\})} \leq C\left( \|\nabla_{y}\phi\|_{C^{1,1/2}(\{|y|>R\})} + R^{-1/6} M(R)\right).
\end{equation*}
\end{lemma}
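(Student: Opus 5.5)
The bound is read off from the fact that $u$ vanishes on $\Gamma_i$, i.e. $u(y,0)=0$ in Fermi coordinates with respect to $\Gamma_i$. I plan to follow \cite[Lemma 4.6]{WW19}, with \Cref{lem:82gwgbogg} and the improved curvature bounds of \Cref{lem:2ffdec} as input. Writing $u=g_*+\phi$ and evaluating at $z=0$ gives
\[
0=g_*(y,0)+\phi(y,0)=\bar g\big((-1)^{i+1}h_i(y)\big)+\sum_{j\neq i}\big[g_j(y,0)\mp 1\big]+\phi(y,0)\,.
\]
Since $g_j(y,0)=\bar g((-1)^j[z_j-h_j])$, where $z_j$ is the signed distance from $(y,f_i(y))$ to $\Gamma_j$, each term with $j\ne i$ is exponentially small in that distance. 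By \Cref{lem:82gwgbogg} its size is $O(e^{-\sqrt2 D_i^\pm(y)})$, up to factors coming from $h_j$ (which is small by \Cref{lem:914gotb2jk}). Hence these terms are $O(M(R))$ for $|y|>R$. Because $\bar g'(0)$ is bounded away from zero and $h_i\to0$, the implicit function theorem for $t\mapsto\bar g(t)$ near $0$ gives
\[
h_i(y)=(-1)^{i}\,\frac{\phi(y,0)+E_i(y)}{\bar g'(0)}+O\big((|\phi|+|E_i|)^2\big),
\]
where $E_i$ is the interaction sum, and this yields the $C^0$ bound.

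For the higher norms I will differentiate the identity $u(y,0)=0$ tangentially in $y$, up to two times, and take Hölder quotients. Derivatives of $\phi(y,0)$ are controlled by $\|\phi\|_{C^{2,1/2}}$. Derivatives of the interaction terms $g_j(y,0)$ involve derivatives of the distance $z_j(y)$ and of $h_j$. The $h_j$ contributions are small multiples of $\|h\|_{C^{2,1/2}}$ times $M(R)$ and can be absorbed after summing over $j$. The derivatives of $z_j$ are bounded because the layers are almost parallel, by \eqref{eq:endgrowth} and \Cref{lem:2ffdec}. This gives the first estimate, with the nonlinear remainder absorbed since all quantities tend to $0$ by \Cref{lem:914gotb2jk}.

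The improved tangential estimate is the main point. When $\nabla_y$ falls on an interaction term, one gets $\bar g'$ at height $\approx D_i^\pm$ times $\nabla_y z_j(y)$, plus $h_j$-terms. The key observation is that $\nabla_y z_j(y)$ is small: $z_j(y)=\pm D_i^\pm(y)+\dots$, and by \Cref{lem:82gwgbogg} $D_i^\pm=\pm(f_{i\pm1}-f_i)+O(|y|^{-1/6})$ (the error is also differentiable with that decay if one redoes the comparison of \cite[Lemma 8.3]{WW18} at the derivative level). Furthermore, \eqref{eq:endgrowth} gives $|\nabla(f_{i\pm1}-f_i)|\le C|y|^{\alpha-1}\le C|y|^{-1/6}$ for small $\alpha$. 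Combined with the almost-parallel structure, i.e. $|\nabla N_i|\lesssim|\mathrm{II}_i|\le C|y|^{-3/2}$ by \Cref{lem:2ffdec}, this gives $|\nabla_y z_j|+|\nabla_y^2z_j|\le C|y|^{-1/6}$, so each differentiated interaction term is $O(R^{-1/6}M(R))$. The $h_j$ terms are again handled by absorption. This step is the main obstacle: justifying the derivative version of \Cref{lem:82gwgbogg}, and the corresponding Hölder seminorm control, with the $|y|^{-1/6}$ gain. To do this I would write $\Gamma_j$ locally as a normal graph over $\Gamma_i$ and differentiate the defining relation directly.
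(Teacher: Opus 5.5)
Your proposal is correct and follows the argument the paper imports from \cite[Lemma 4.6]{WW19}: evaluate $u=g_*+\phi=0$ on $\Gamma_i$, invert $\bar g$ near $0$, differentiate tangentially, and obtain the extra decay in the second estimate from the smallness of $\nabla_y z_j$. The step you flag as the main obstacle is actually direct: $\partial_{y_k}z_j=(\partial_kf_i(y)-\partial_kf_j(y_j))/\sqrt{1+|\nabla f_j(y_j)|^2}$, where $(y_j,f_j(y_j))$ is the foot point on $\Gamma_j$, so $\nabla_y z_j=O(|y|^{\alpha-1})$ by \eqref{eq:endgrowth} (and the second derivatives are handled similarly using \Cref{lem:2ffdec}), and no derivative version of \Cref{lem:82gwgbogg} is needed.
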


\noindent {\bf The Toda system}

By \cite[Section 5]{WW19}, we get the following Toda system:
\begin{equation}\label{eq:todagluing}
H_i + \Delta_{i,0} h_i = \kappa_0 \left(e^{-\sqrt{2} D_i^-} - e^{-\sqrt{2} D_i^+} \right) + Err_i,
\end{equation}
where $\kappa_0$ is a universal constant and $Err_i$ is a higher order error term. More precisely, \cite[Lemma 5.1]{WW19} in our setting becomes:

\begin{lemma}\label{lem:88tgpbgbagu}
For any $|y| > R_0$,
\begin{align*}
|Err_i(y)| &\leq C\Big[ |y|^{-3} + |y|^{-\frac{7}{2}} M(|y| - 100\log |y|) + M(|y| - 100 \log |y|)^{\frac{7}{6}}\\
&\quad + \max_\beta \left( \|H_\beta + \Delta_{\beta,0} h_\beta\|_{C^{1/2}(\{\widetilde y\ :\ |\widetilde y|>|y| - 100 \log |y|\})}^2 + \|\phi\|_{C^{2,1/2}(\{\widetilde y\ :\ |\widetilde y|>|y| - 100 \log |y|\})}^2 \right)\Big].
\end{align*}
\end{lemma}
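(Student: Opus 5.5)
The plan is to rerun the proof of \cite[Lemma 5.1]{WW19}, as adapted to the axially symmetric setting in \cite{GuiWangWei2020}, and change only the places where the scale-invariant curvature bound $|\mathrm{II}_i|\lesssim |y|^{-1}$ enters. In each such place we substitute \Cref{lem:2ffdec}, i.e. $|\mathrm{II}_i|+|\nabla \mathrm{II}_i|\leq C|y|^{-3/2}$. The derivation begins by multiplying the equation for $\phi$, written in Fermi coordinates with respect to $\Gamma_i$, by $\bar g'(z-h_i(y))$ and integrating in $z$ over $|z|\lesssim \log|y|$ (this is the region where $\bar g'$ is supported, inside $\mathcal M_i$). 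The main terms are then identified:
\begin{itemize}
\item The term $(-1)^i g_i'\mathcal R_{i,1}$ produces $c(H_i+\Delta_{i,0}h_i)$.
\item The interaction term $\mathcal I$ tested against $g_i'$ produces $\kappa_0(e^{-\sqrt2 D_i^-}-e^{-\sqrt2 D_i^+})$. Here $D_i^\pm$ are read as distances via the definition of $g_j$ in the coordinates of $\Gamma_j$.
\end{itemize}
Everything that remains is collected into $Err_i$, and the task is to bound each remaining piece by one of the terms in the claimed right-hand side.

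The bounds go term by term.

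\textbf{Terms from $\phi$.} By orthogonality, $\int \phi\,\bar g'(z-h_i)\,dz=0$. Differentiating this identity in $y$ and using $\partial_{zz}\bar g'=W''(\bar g)\bar g'+O(|y|^{-4})$, the terms $\int(\partial_{zz}\phi-W''(g_*)\phi)g_i'$ reduce to $W''(g_*)-W''(g_i)$ times $\phi$. Since $W''(g_*)-W''(g_i)=O(e^{-\sqrt2\min(D_i^\pm)}$ localised$)$, this gives $M\|\phi\|\lesssim M^2+\|\phi\|^2$, and $M^2\le M^{7/6}$. The nonlinear term satisfies $\mathcal N(\phi)=O(\phi^2)$.

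\textbf{Terms from geometry.} For $\Delta_{i,z}\phi-\Delta_{i,0}\phi$ we use \eqref{eq:47t14gtlvag}. For $H_i(y,z)\partial_z\phi$ we use the mean curvature expansion $H_i(y,z)=H_i(y,0)+O(|z||y|^{-3})$. These give contributions $|y|^{-3/2}\log|y|\,\|\phi\|_{C^2}$ and $|y|^{-3}\log^2|y|$, plus $\|H_i\|\,\|\phi\|$. By Young's inequality these are dominated by $|y|^{-3}$ (with the logarithms absorbed, after slightly adjusting exponents), the squared norms, and the $|y|^{-7/2}M$ term. The key point is that \Cref{lem:2ffdec} replaces the $|y|^{-2}$ which \cite{WW19} obtains from $|\mathrm{II}|^2\lesssim|y|^{-2}$ by $|y|^{-3}$. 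The cutoff errors $\xi_j$ are $O(|y|^{-4})$.

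\textbf{Cross terms from other layers.} These are the terms $\sum_{j\ne i}[(-1)^jg_j'\mathcal R_{j,1}-g_j''\mathcal R_{j,2}]$ tested against $g_i'$. They are bounded by $e^{-\sqrt2 D_i}$ times $\|H_j+\Delta_{j,0}h_j\|+|\nabla h_j|^2$, hence by $M^2$ plus the squared norms. We also need that $\mathcal R_{j,1}$ evaluated at $z$ differs from its value at $z=0$, which again costs $|y|^{-3/2}|z|$ by \Cref{lem:2ffdec}, and this matches the $|y|^{-7/2}M$ term. The $\mathcal R_{i,2}=|\nabla h_i|^2$ term is quadratic and is bounded via \Cref{lem:72gioubo}. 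The subleading expansion of $\mathcal I$, including the replacement of $D_i^\pm$ by the $z$-distance from the $g_j$ profiles, contributes $M^{1+1/6}$ in the usual way. The nonlocality, meaning the norms taken over $|\widetilde y|>|y|-100\log|y|$, arises because Fermi coordinates of $\Gamma_j$ at height $|z|\lesssim\log|y|$ project to base points within $O(\log|y|)$ of $y$.

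The main obstacle is the bookkeeping of the geometric terms. One has to check that \emph{every} occurrence of $|\mathrm{II}|^2$ or $|\nabla\mathrm{II}|$ in the \cite{WW19} derivation now carries the improved power. In particular, the discrepancy between the Fermi coordinates of $\Gamma_i$ and $\Gamma_j$ (the term $\Delta_{j,z}h_j$ expressed in $\Gamma_i$-coordinates) must be shown to be $O(|y|^{-3/2}\log|y|)$ times $C^2$ norms rather than $O(|y|^{-1})$. I would handle this first, by writing the change of coordinates using the almost-parallel structure from \Cref{lem:82gwgbogg} and \eqref{eq:endgrowth}.
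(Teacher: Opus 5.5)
Your proposal is essentially the paper's argument: the paper simply invokes \cite[Lemma 5.1]{WW19}, in the form adapted by \cite{GuiWangWei2020}, and notes that feeding in the improved bound of \Cref{lem:2ffdec} upgrades the purely geometric error (e.g.\ $H_i(y,z)-H_i(y,0)=O(|z||\mathrm{II}_i|^2)$) to $O(|y|^{-3})$, with everything else unchanged from Wang--Wei. Your term-by-term bookkeeping is a reasonable expansion of that citation, with two small points: the $|y|^{-7/2}M$ term needs no dedicated source, since $M\leq 1$ and so it is dominated by $|y|^{-3}$; and the logarithms you introduce do not arise if you integrate against the exponentially decaying $\bar g'$ instead of bounding $|z|\lesssim\log|y|$ (alternatively, they can be absorbed using the stronger decay $|\mathrm{II}_i|\lesssim|y|^{-2+\alpha}$ noted in the footnote).
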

Naturally, we are using once again that we have the improved bound from \Cref{lem:2ffdec} to obtain the $O(|y|^{-3})$ term.\\

\noindent {\bf Estimates on $\phi$.}

Arguing exactly in the same way as in \cite[Section 6]{WW19}, we have the following.

\begin{lemma}\label{lem:9qbiuvfhs}
There exist two constants $C$ such that for all $R$ large,
\begin{align*}
    \max_i &\left\| H_i + \Delta_{i,0} h_i \right\|_{C^{1/2}(\{|y|>R\})} + \|\phi\|_{C^{2,1/2}(\{ |y|>R\})}\\
    &\leq \frac{1}{2} \Big(\max_i \left\| H_i+ \Delta_{i,0} h_i \right\|_{C^{1/2}(\{|y|>R - 100 \log R\})}\\
    &\qquad+ \|\phi\|_{C^{2,1/2}(\{|y|>R - 100 \log R\})}\Big) 
+ CM(R - 100 \log R) + C R^{-3}.
\end{align*}
\end{lemma}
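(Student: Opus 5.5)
The plan is to reproduce the argument of \cite[Section 6]{WW19}, in the form adapted in \cite{GuiWangWei2020}, keeping track of the improved curvature decay from \Cref{lem:2ffdec}. Fix $R$ large and work on the exterior region $\{|y|>R\}$ in Fermi coordinates with respect to each $\Gamma_i$. I would treat the equation for $\phi$ displayed above as a linear problem $\mathcal L\phi=F$, where $\mathcal L=\Delta-W''(g_*)$ and $F$ collects $\mathcal N(\phi)$, $\mathcal I$, the terms $g_j'\mathcal R_{j,1}$, $g_j''\mathcal R_{j,2}$ and the $\xi_j$. The orthogonality condition of \Cref{lem:914gotb2jk} fixes the kernel direction $g_i'$ layer by layer. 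Because of this, the standard coercivity of the one-dimensional linearised operator on $\{\bar g'\}^\perp$ combined with Schauder theory gives an estimate of the form
\[
\|\phi\|_{C^{2,1/2}(\{|y|>R\})}\le C\|F\|_{C^{1/2}(\{|y|>R-\rho\})}+(\text{boundary contribution}).
\]

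The first main step is localising the right-hand side. I would split $F$ into its projection onto $g_i'$ in each $\mathcal M_i$ and the orthogonal remainder. Multiplying the $\phi$-equation by $g_i'$ and integrating in $z$ shows that $H_i+\Delta_{i,0}h_i$ equals $\kappa_0(e^{-\sqrt2D_i^-}-e^{-\sqrt2D_i^+})$ plus errors; this is \eqref{eq:todagluing} together with \Cref{lem:88tgpbgbagu}. The orthogonal part is bounded as follows.
\begin{itemize}
\item The interaction term $\mathcal I$ is bounded by $CM$.
\item The cutoff errors satisfy $\xi_j=O(|y|^{-4})$.
\item The Laplacian commutator from \eqref{eq:47t14gtlvag} is $O(|y|^{-3/2}|z|)$ times derivatives of $h$, and these derivatives are controlled by \Cref{lem:72gioubo}.
\item The mean-curvature expansion error is $O(|z||y|^{-3})$, using the improved bound of \Cref{lem:2ffdec} in place of $|y|^{-2}$.
\item The nonlinearity $\mathcal N(\phi)$ is quadratic, so it is small by \Cref{lem:914gotb2jk}.
\end{itemize}
Together these give the $CR^{-3}$ term, while the products of small quantities supply the factor $\tfrac12$.

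For $\|H_i+\Delta_{i,0}h_i\|_{C^{1/2}}$ I would use \eqref{eq:todagluing} and \Cref{lem:88tgpbgbagu}. The quantity is bounded by $CM+|Err_i|$, and $Err_i$ contains only squares of the norms being estimated, plus $|y|^{-3}$ and powers of $M$. Since these norms tend to $0$ by \Cref{lem:914gotb2jk}, for $R$ large the quadratic terms are at most $\tfrac14$ of the norms on the enlarged domain.

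The step I expect to be the main obstacle is the loss-of-domain mechanism: why we may lose only $100\log R$ in radius, and how the boundary contribution is absorbed. Here I would follow \cite{WW19}. The operator $\mathcal L$ has exponentially decaying Green's function once orthogonality is imposed, so the influence of the data at distance $L$ decays like $e^{-cL}$. With $L=100\log R$ this gives a factor $R^{-100c}$, which is much smaller than $R^{-3}$ when multiplying the bounded sup norm of $\phi$. The exponential decay would be proved by a barrier (maximum principle) argument on $W''(g_*)>0$ away from the layers, combined with the orthogonality near the layers. Adding the two estimates and choosing $R$ large then yields the stated inequality with constant $\tfrac12$.
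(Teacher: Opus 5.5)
Your proposal is correct and follows the same route as the paper. The paper simply defers to \cite[Section 6]{WW19} and uses the improved bound $|\mathrm{II}_i|+|\nabla \mathrm{II}_i|\le C|y|^{-3/2}$ from \Cref{lem:2ffdec}, which makes the mean-curvature expansion error $O(|z||y|^{-3})$ and produces the $CR^{-3}$ term exactly as in your bookkeeping; just note that the kernel-direction forcing $g_i'(H_i+\Delta_{i,0}h_i)$ in the $\phi$-estimate must be substituted via the Toda bound rather than merely added, which closes because the Toda errors are quadratic and so carry an arbitrarily small coefficient for $R$ large.
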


The constant $1/2$ in front of the first term on the right-hand side of this inequality allows us to iterate for a \textit{finite number} of dyadic scales and conclude that, up to making $R_0$ larger, we have
\begin{equation}\label{eq:90124htp2}
    |H_i(y) + \Delta_{i,0} h_i(y)| + \|\phi\|_{C^{2,1/2}(\{\widetilde y\ :\ |\widetilde y|>|y|\})} \leq C \left[|y|^{-3} + M(|y| - 100 \log |y|)\right] \quad \forall\, |y| \geq R_0.
\end{equation}

The separation error, by the bound in \cite[Proposition 10.1]{WW19}, satisfies $M(|y|) \leq C |y|^{-2}$. Then, applying \eqref{eq:90124htp2} with $R_0$ large enough gives
\begin{equation*}
|H_i(y) + \Delta_{i,0} h_i(y)| + \|\phi\|_{C^{2,1/2}(\{\widetilde y\ :\ |\widetilde y|>|y|\})} \leq C |y|^{-2}.
\end{equation*}

Moreover, we can get a better bound for the horizontal derivatives of $\phi$, by \cite[Proposition 7.1]{WW19}:
\begin{equation}\label{eq:82hiutb2ipb}
\|\nabla_y \phi\|_{C^{1,1/2}(\{\widetilde y\ :\ |\widetilde y|>|y|\})} \leq C |y|^{-2 - 1/7}.
\end{equation}

In view of \Cref{lem:72gioubo}, \eqref{eq:82hiutb2ipb} gives an upgrade for $h$ too:
\begin{equation*}
\|\nabla_{y} h_i\|_{C^{1,1/2}(\{\widetilde y\ :\ |\widetilde y|>|y|\})} \leq C |y|^{-2 - 1/7}.
\end{equation*}
This bounds, in particular, the size of the Laplacian of $h$. Substituting into \eqref{eq:todagluing}, and using \Cref{lem:82gwgbogg} to compare the layer separations, we reach
\begin{equation*}
{\rm div}\left(\frac{\nabla f_{i}}{\sqrt{1+|\nabla f_i|^2}}\right) = \kappa_0 \left(e^{-\sqrt{2}(f_i - f_{i-1})} - e^{-\sqrt{2}(f_{i+1} - f_i)}\right) + O\left(|y|^{-2 - \frac{1}{8}}\right)\,.
\end{equation*}
Since ${\rm div}\left(\frac{\nabla f_{i}}{\sqrt{1+|\nabla f_i|^2}}\right)=\Delta f_{i}+O(|\nabla f_i|^2||D^2f_i|)$, using \eqref{eq:endgrowth} we finally obtain
\begin{equation*}
\Delta f_{i} = \kappa_0 \left(e^{-\sqrt{2}(f_i - f_{i-1})} - e^{-\sqrt{2}(f_{i+1} - f_i)}\right) + O\left(|y|^{-2 - \frac{1}{8}}\right)\,.
\end{equation*}

This shows \eqref{eq:todaimprov}. The stability condition in \eqref{eq:todastabimprov} then follows as in \cite[Proposition 8.1]{WW19}, using the improvements that we have already derived. This concludes the proof of \Cref{prop:todaimprov} and of the present section.

\section{Proofs of the main results}
\subsection{Rigidity for $4 \leq n \leq 7$ --- Proof of Theorems \ref{thm:R4indclass} and \ref{thm:finindclass}}\label{sec:finindclass}

\Cref{thm:R4indclass} will follow directly from \Cref{thm:finindclass}, since the main result in \cite{FSstable} asserts precisely that \eqref{eq:stableclass} holds for $n=4$.

It remains to prove \Cref{thm:finindclass}. Fix $n\in\N$ with $4\leq n \leq 7$, and assume that \eqref{eq:stableclass} is true in $\R^n$. Let $u:\R^n\to[-1,1]$ be a finite index solution to the Allen--Cahn equation, with $\mathcal E(u,B_R)\leq CR^{n-1}$ for some constant $C$ and all $R>0$.

We keep the same setting and notation as in Sections \ref{sec:graphicality}--\ref{sec:todaimprov}. In particular, we have Propositions \ref{prop:endgrowth} and \ref{prop:todaimprov}.

It is easy to conclude if $N=1$ (we will show this at the end of the proof). Assume then for now that there are at least two ends of $u$, i.e. $N\geq 2$---we want to show that this leads to a contradiction.\\
{\bf Farina's integral estimate}

Consider two adjacent ends $\Gamma_i$ and $\Gamma_{i-1}$, $i\in\{2,...,N\}$. Let $v_i := f_i - f_{i-1}$, which depends on $y\in\R^{n-1}$. Applying \eqref{eq:todaimprov} with $i$ and $i-1$, and substracting, we get the inequality
\begin{equation*}
\Delta v_i (y)\leq 2\kappa_0 e^{-\sqrt{2} v_i(y)} + O(|y|^{-2 - 1/8}) \quad \text{in } \R^{n-1}\setminus \overline{B_{R_0}}.
\end{equation*}
In terms of $V_i:= e^{-\sqrt{2}v_i}$, this gives
\begin{equation*}
- \Delta V_i \leq 2\sqrt{2}\kappa_0 V_i^2- V_i^{-1} |\nabla V_i|^2 + O(|y|^{-2 - 1/8}) V_i \quad \text{in } \R^{n-1}\setminus \overline{B_{R_0}}.
\end{equation*}
We argue as in \cite{Far07} (see also \cite{DF09,Wang19,GuiWangWei2020}) and obtain an integral estimate for the interactions:

Let $q > 0$ and $\eta \in C_0^\infty (\R^{n-1}\setminus \overline{B_{R_0}})$. Multiplying the equation for $V_i$ by $V_i^{2q - 1} \eta^2$ and integrating by parts we find
\begin{align*}
(2q-1) \int_{\R^{n-1}\setminus \overline{B_{R_0}}} V_i^{2q - 2} &|\nabla V_i|^2 \eta^2  dy+2\int_{\R^{n-1}\setminus \overline{B_{R_0}}} V_i^{2q - 1} \eta \nabla V_i \cdot \nabla \eta(y)  dy \leq 2\sqrt{2}\kappa_0 \int_{\R^{n-1}\setminus \overline{B_{R_0}}} V_i^{2q + 1} \eta^2  dy  \\
&-\int_{\R^{n-1}\setminus \overline{B_{R_0}}} V_i^{2q - 2} |\nabla V_i|^2 \eta^2  dy + C \int_{\R^{n-1}\setminus \overline{B_{R_0}}} V_i^{2q} \eta^2 |y|^{-2 - 1/8} dy.
\end{align*}
Integrating by parts in the second integral, and collecting terms,
\begin{align}
2q \int_{\R^{n-1}\setminus \overline{B_{R_0}}} V_i^{2q - 2} |\nabla V_i|^2 \eta^2  dy &\leq 2\sqrt{2}\kappa_0 \int_{\R^{n-1}\setminus \overline{B_{R_0}}} V_i^{2q + 1} \eta^2  dy \notag \\
&\quad + C \int_{\R^{n-1}\setminus \overline{B_{R_0}}} V_i^{2q} \left[ |\nabla \eta|^2 + \eta |D^2 \eta| + \eta^2 |y|^{-2 - 1/8} \right] dy.\label{eq:128fgouiqbgf}
\end{align}

On the other hand, substituting $V_i^q \eta$ as a test function into \eqref{eq:todastabimprov}:
\begin{align}
2\sqrt{2}\kappa_0 \int_{\R^{n-1}\setminus \overline{B_{R_0}}} V_i^{2q + 1} \eta^2 dy
&\leq q^2 \left(1 + C R_0^{-\frac{1}{8}} \right) \int_{\R^{n-1}\setminus \overline{B_{R_0}}} V_i^{2q - 2} |\nabla V_i|^2 \eta^2 dy \notag \\
&\quad + C \int_{\R^{n-1}\setminus \overline{B_{R_0}}} V_i^{2q} \left[ |\nabla \eta|^2 + \eta^2 |y|^{-2 - 1/8} \right] dy. \label{eq:128fgouiqbgf2}
\end{align}

Combining \eqref{eq:128fgouiqbgf} and \eqref{eq:128fgouiqbgf2} we find
\begin{align*}
\left[2q-q^2 \left(1 + C R_0^{-\frac{1}{8}} \right)\right] \int_{\R^{n-1}\setminus \overline{B_{R_0}}} V_i^{2q - 2} &|\nabla V_i|^2 \eta^2  dy\\
&\leq C \int_{\R^{n-1}\setminus \overline{B_{R_0}}} V_i^{2q} \left[ |\nabla \eta|^2 + \eta |D^2 \eta| + \eta^2 |y|^{-2 - 1/8} \right] dy\,.
\end{align*}
If $q < 2$, up to making $R_0$ large enough we find some $C(q) < +\infty$ such that
\begin{align*}
\int_{\R^{n-1}\setminus \overline{B_{R_0}}} V_i^{2q - 2} &|\nabla V_i|^2 \eta^2  dy\\
&\leq C(q) \int_{\R^{n-1}\setminus \overline{B_{R_0}}} V_i^{2q} \left[ |\nabla \eta|^2 + \eta |D^2 \eta| + \eta^2 |y|^{-2 - 1/8} \right] dy\,.
\end{align*}
Combining this back with \eqref{eq:128fgouiqbgf2},
\begin{equation}\label{eq:81tgqbgpbpg}
\int_{\R^{n-1}\setminus \overline{B_{R_0}}} V_i^{2q + 1} \eta^2  dy \leq C(q) \int_{\R^{n-1}\setminus \overline{B_{R_0}}} V_i^{2q} \left[ |\nabla \eta|^2 + \eta |D^2 \eta| + \eta^2 |y|^{-2 - 1/8} \right]  dy.
\end{equation}

Assume $0 \leq \eta \leq 1$. We can then replace $\eta$ by $\eta^m$, for some $m \geq 5$; applying Hölder's inequality to \eqref{eq:81tgqbgpbpg}, we get
\begin{equation}\label{eq:32gtwbot2g}
\int_{\R^{n-1}\setminus \overline{B_{R_0}}} V_i^{2q + 1} \eta^2 dy \leq C(q) \left( \int_{\R^{n-1}\setminus \overline{B_{R_0}}} \left[|\nabla \eta|^2 + |D^2 \eta| + |y|^{-2 - 1/8} \right]^{2q + 1} dy \right)^{\frac{1}{2q + 1}}.
\end{equation}

For any $R > 2R_0$, take $\eta_R \in C_c^\infty(R_0, 2R)$ such that $0 \leq \eta_R \leq 1$, $\eta_R \equiv 1$ in $(2R_0, R)$, and $|\nabla \eta_R|^2 + |D^2 \eta_R| \leq C R^{-2}$ in $(R_0, 2R)$. Substituting $\eta_R(|x|)$ into \eqref{eq:32gtwbot2g}, we get
\begin{equation*}
\int_{\R^{n-1}\setminus \overline{B_{2R_0}}} V_i^{2q + 1} dy \leq C + C R^{(n-1) - 2(2q + 1)}.
\end{equation*}

Thanks to our assumptions on $n$, we can take $q\in(0,2)$ such that $2q + 1 = (n-1)/2$. After letting $R \to +\infty$ in the inequality, we arrive at
\begin{equation*}
\int_{\R^{n-1}\setminus \overline{B_{2R_0}}} V_i^{(n-1)/2} dy \leq C.
\end{equation*}
We emphasise that this integrability estimate breaks the natural scaling $V_i=O(|x|^{-2})$ for the Toda system.

{\bf Contradiction from $N\geq 2$.}

From the above estimate, we deduce in particular that given $\delta>0$, there is some $R_\delta$ such that
$$\int_{\R^{n-1}\setminus \overline{B_{R_\delta}}} V_i^{n/2} dy\leq \delta\,.$$
Then, as in Dancer--Farina \cite{DF09}, a Harnack-type inequality (from \cite{Serrin64, Tru67}, and applied to the differential inequality satisfied by $V_i$) readily implies that $|y|^2V_i(y)\leq C\delta$ for all $|y|$ large enough. Since $\delta>0$ is arbitrary, this shows that
$$
e^{-\sqrt{2} v_i(y)} = o(|y|^{-2})\quad\mbox{as}\quad y\to\infty\,.
$$

This then leads to a contradiction as in \cite{DF09}:
\begin{itemize}
\item The bound $
e^{-\sqrt{2} v_i(y)} = o(|y|^{-2})$ implies---since $\Delta v_i\leq e^{-\sqrt{2} v_i(y)}+O(|y|^{-2-1/8})$---that $
\Delta v_i \leq o(|y|^{-2})$ too.
\item This shows then---for the radially averaged function $\bar v_i$---by integrating along rays that
$$\bar v_i\leq \delta\log|y|+C_\delta(1+|y|^{-{n-3}})\,,$$
for any $\delta>0$.
\item But then $
e^{-\sqrt{2} \bar v_i(y)}\geq c_\delta|y|^{-\delta}$, which choosing $\delta<2$ clearly gives a contradiction with
$$
e^{-\sqrt{2} \bar v_i(y)}\leq\max_{\{z:|z|=|y|\}}e^{-\sqrt{2} v_i(z)} \leq C|y|^{-2}\,.$$
\end{itemize}

{\bf Classification with $N=1$.}

The previous arguments confirm that $u$ only has one end indeed, i.e. $N=1$. The one-dimensional symmetry of $u$ is then obtained for example as follows: by the discussion in \Cref{rmk:inftydens} we deduce then that ${\bf M}_\infty(u)=1$. But then, Wang's Allard-type result (recorded in \Cref{thm:ACallard}) shows that $u$ is one-dimensional as desired.

\subsection{Behaviour in closed 4-manifolds --- Proof of \Cref{thm:4closedbeh}}\label{sec:4closedbeh}
We recall first that there is a local monotonicity formula (with a controlled error) for solutions to A--C in the Riemannian setting, see \cite[Appendix B]{Guaraco2018}. In particular, it shows that there is some $\Lambda$---depending only on $M$ and the energy bound $E_0$---such that ${\bf M}_r(u_{\ep_i},p):=\frac{1}{r^{n-1}}\int_{B_r(p)} \frac{1}{\sigma_{n-1}}\left[\ep|\nabla u_{\ep_i}|^2+\frac{1}{\ep}W(u_{\ep_i})\right]\leq \Lambda$ for every $p\in M$ and $r>0$. This will allow us to perform rescaling arguments while ensuring a bound on the energy densities at all scales of interest.

\noindent {\bf Step 1.} Lower bound for the gradient .\\
Assume, for contradiction, that there were a subsequence $\ep_i\to 0$ and some $p_i\in M$ such that 
    \begin{equation}\label{eq:fiowgopgagop2}
         |u_{\ep_i}|(p_i)\leq 0.9 \quad\mbox{and}\quad|\nabla u_{\ep_i}|(p_i)<\frac{1}{i\ep_i}\,.
    \end{equation}
    Let $g$ denote the metric of $M$, and consider the rescaled manifolds $\widetilde M_i:=(M,\widetilde g_i)$, where $\widetilde g_i:=\ep_i^{-1/2}g$. In Riemannian normal coordinates, centered at $p_i$ (or any sequence of points, for that matter), the metrics of the $\widetilde M_i$ are converging---in $C^k_{loc}$, for any $k\in\N$---to the Euclidean metric on all of $\R^n$.
    
    Let $\widetilde u_i$ denote the functions $u_{\ep_i}$, viewed over $\widetilde M_i$; they are now solutions to A--C with parameter $1$. Moreover, ${\bf M}_r(\widetilde u_i,p_i)\leq \Lambda$ as well, again interpreted over $\widetilde M_i$ now. Additionally, by \eqref{eq:fiowgopgagop2} we have $|\widetilde u_i|(p_i)\leq 0.9$ and $|\nabla \widetilde u_i|(p_i)<\frac{1}{i}$.
    
    Now, since $|\widetilde u_i|\leq 1$, and they all satisfy \eqref{eq:aceqintro}, we obtain uniform $C^3$ bounds for the $\widetilde u_i$ (by standard $C^3$ bootstrap estimates for semilinear elliptic equations). Then, up to passing to Riemannian normal coordinates around $p_i$, the Arzel\`a-Ascoli theorem provides a subsequence converging in $C^2_{\rm loc}(\R^n)$ to $\widetilde u_\infty$, a bounded solution to A--C on all of $\R^n$, such that
    $${\bf M}_\infty(\widetilde u_\infty)=\lim_{r\to\infty}\lim_{i\to\infty} {\bf M}_r(\widetilde u_i, p_i)\leq \Lambda\qquad\mbox{yet}\qquad |\widetilde u_\infty|(0)\leq 0.9\quad\mbox{and}\quad|\nabla \widetilde u_\infty|(0)=0\,.$$
    Finally, it is easy to see that $\widetilde u_\infty$ has finite Morse index, in fact bounded by $C_0$ just as the $\widetilde u_i$. Then, by \Cref{thm:R3indclass}---together with the fact that $|\widetilde u_\infty|(0)\leq 0.9<1$---we deduce that $\widetilde u_\infty=g(a\cdot x+b)$ for some appropriate $a,b$. But then $|\nabla \widetilde u_\infty|(0)=g'(b)\neq 0$, which gives a contradiction.
    
    \noindent \textbf{Step 2.} Proof of the bound for $\mathcal A_{u_{\ep}}$.

    Since we want to prove an asymptotic in $\ep_i$ small, we can assume that $\ep_i\to 0$. Now, by Step 1, we know then in particular that $\{u_{\ep_i}=t\}$ is smooth as long as $|t|\leq 0.9$ and $i$ is large enough.
    Assume, by contradiction, that there were a subsequence (not relabeled) of solutions for which the asymptotic fails. This means that for some $\delta>0$ there are $p_i\in \{|\widetilde u_i|\leq 0.9\}$ with $\ep_i\mathcal A_{u_{\ep_i}}(p_i)\to\infty$.
    
    We consider then again the rescaled manifolds $\widetilde M_i:=(M,\ep_i^{-1/2}g)$ from Step 1, with associated $\widetilde u_i$ satisfying $\mathcal A_{\widetilde u_i}(p_i)\geq \delta$. Passing to a limit---in Riemannian normal coordinates centered at $p_i$---via Arzel\`a--Ascoli, we obtain a solution $\widetilde u_\infty$ to A--C in all of $\R^n$. Moreover, it has bounded index and energy density since the $\widetilde u_i$ do. As in Step 1, then $\widetilde u_\infty=g(a\cdot x+b)$ for some appropriate $a,b$. On the other hand, it necessarily satisfies $\mathcal A_{\widetilde u_\infty}(0)\geq \delta$, which since $\mathcal A_g\equiv 0$ yields a contradiction.

    \noindent \textbf{Step 3.} Proof of the sheet separation.\\
    Given $\Lambda\geq 1$, by the previous steps there is $\ep_0>0$ such that given $\ep\leq \ep_0$ and $t\in[-0.9,0.9]$, the following holds: Given $p\in\{u_\ep=t\}$, the level set $\{u_\ep=t\}$ decomposes into several parallel graphs in normal coordinates in $B_{2\Lambda\ep}(p)$.
    
    Arguing via a rescaling argument once again, just as in the previous steps, we then deduce that---up to making $\ep_0$ small enough---in fact only a single graph can pass through $B_{\Lambda\ep}(p)$, since $\{g(a\cdot x+b)=t\}\subset \R^n$ consists of a single hyperplane.

\subsection{Structure for $n=3$ --- Proof of \Cref{thm:R3indclass}}\label{sec:R3indclass}
We keep once again the setting and notation from Sections \ref{sec:todaimprov} and \ref{sec:finindclass}, restricting now to $n=3$.

If $N=1$, then $u$ is actually one-dimensional, just as in the end of the proof of \Cref{thm:finindclass}. Therefore, we assume $N\geq 2$.

{\bf Integrability of layer interaction}

Let $R>4R_0$. We start by testing stability (i.e. \eqref{eq:todastabimprov}) with a standard linear cutoff $\eta_R\in C_c^1(B_{2R}\setminus \overline{B_{R_0}})$, satisfying $\eta_R\equiv 1$ in $B_{R}\setminus \overline{B_{2R_0}}$ and $|\nabla \eta_R|\leq C R^{-1}$. This shows that, for any $i\in\{2,...,N\}$,
$$
\int_{B_R'\setminus \overline{B_{2R_0}'}} e^{-\sqrt{2}(f_i-f_{i-1})}\,dy\leq C\int_{B_{2R}'\setminus \overline{B_{R_0}'}}\frac{1}{R^2}\,dy +C\int_{B_{2R}'\setminus \overline{B_{R_0}'}} |y|^{-2-\frac{1}{8}}\,dy\leq C\,,
$$
where $C$ is independent from $R$. Sending $R\to\infty$, we conclude then that 
\begin{equation}\label{eq:238hrfbqvap9}
    \int_{\R^2\setminus \overline{B_{2R_0}'}} e^{-\sqrt{2}(f_i-f_{i-1})}\,dx'\leq C\quad\mbox{for every}\quad i=2,...,N\,.
\end{equation}
Fix $i\in\{1,...,N\}$ now. Integrating in \eqref{eq:todaimprov} with this fixed value of $i$, and using \eqref{eq:238hrfbqvap9} and the integrability of $|y|^{-2-\frac{1}{8}}$ in $\R^2$, we conclude that
\begin{equation}\label{eq:245gtqfj}
    \Delta f_i=h_i\qquad\mbox{for some}\ h_i\in (L^1\cap L^\infty)(\R^{n-1}\setminus \overline{B_{2 R_0}'})\,.
\end{equation}
It is hard to obtain any more information from stability. Nevertheless, the preceding information about the $f_i$ will already strongly constrain their structure.

{\bf Structure of single layers}

The first idea, which is inspired by \cite{CL91}, is to split $f_i=v_i^a+v_i^b$ with 
    $$v_i^a(y):=\frac{1}{2\pi}\int_{\R^2\setminus \overline{B_{2R_0}'}} (\log|y-z|-\log |z|)| h(z)\,dz$$
    and $v_i^b:=f_i-v_i^a$.

    Observe that $\int \log |z|\, h(z)\,dz$ is ``just a renormalisation constant''. On the other hand, it could very well be infinite---the integral defining $v_i^a$ is only seen to be convergent (using the integrability assumptions for $h$, and the fact that $\left|\log|y-z|-\log|z|\right|\leq C\frac{|y|}{|z|}\leq C$ for $|z|\geq 2|y|$) upon considering its integrand as a whole.
    
    It is simple to see that, setting $c_i^a:= \frac{1}{2\pi}\int_{\R^2\setminus \overline{B_{2R_0}'}} h\in\R$, we have
    $$v_i^a(y)=(c_i^a+o(1))\log |y| \quad\mbox{as}\quad |y|\to\infty \,.$$
    This is still far from the type of asymptotics desired in \Cref{sec:R3indclass}. However, at this point we can obtain more precise asymptotics for $v_i^b$. Observe that $v_i^b=f-v_i^a$ satisfies $\Delta v_i^b=0$, i.e. it is harmonic, thus it is not limited by the Laplacian bound anymore. A priori, $v_i^b$ could be any harmonic function; on the other hand, by the growth bound for $v$ and the asymptotics for $v_i^a$, we additionally know that $|v_i^b(y)|\leq C|y|^{1/2}$.
    
    Via the improvement of flatness in annuli strategy, we will show:
\begin{proposition}\label{prop:logexp3D}
    Let $w:\R^2\setminus \overline{B_{2 R_0}'}\to\R$ be a harmonic function satisfying $|w(y)|\leq C|y|^{1/2}$.
    
    Then, there are $b,c\in\R$ such that $w=b+c\log|y|+O(|y|^{-\alpha})$ for every $\alpha\in (0,1)$.
\end{proposition}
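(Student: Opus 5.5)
The plan is to run the annular improvement-of-flatness scheme of \Cref{sec:annimprovhyper} for the exactly harmonic function $w$ on exterior annuli. The statement is also a classical consequence of Laurent/Fourier expansion, and I would use that as a cross-check.

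Write $w$ in polar coordinates on $\R^2\setminus\overline{B_{2R_0}'}$ by separation of variables:
$$w(r,\theta)=b+c\log r+\sum_{k\geq1}\left(a_k r^k+a_{-k}r^{-k}\right)\cos(k\theta)+\left(b_k r^k+b_{-k}r^{-k}\right)\sin(k\theta).$$
This series converges in $C^\infty$ on compact subsets of the exterior domain. Each Fourier coefficient of $w$ on the circle of radius $r$ is bounded by $\sup_{\partial B_r'}|w|\leq Cr^{1/2}$. For $k\geq1$ the cosine coefficient equals $a_kr^k+a_{-k}r^{-k}$, and letting $r\to\infty$ forces $a_k=0$; likewise $b_k=0$. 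Hence
$$w-b-c\log r=\sum_{k\geq1}\left(a_{-k}\cos(k\theta)+b_{-k}\sin(k\theta)\right)r^{-k}.$$

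To bound this tail, fix $r_1=4R_0$. Then $\sum_k(|a_{-k}|+|b_{-k}|)r_1^{-k}\leq C$: either Parseval on $\partial B_{r_1}'$ (using $\ell^2\subset$ bounded after Cauchy--Schwarz with the geometric factor), or decay of the coefficients by interior smoothness on $\partial B'_{r_1}$. For $r\geq 2r_1$ this gives
$$|w-b-c\log r|\leq\sum_{k\geq 1}\left(|a_{-k}|+|b_{-k}|\right)r_1^{-k}(r_1/r)^k\leq C\,r_1\,r^{-1}.$$
So the error is in fact $O(|y|^{-1})$, which implies the claim for every $\alpha\in(0,1)$.

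To phrase this in the paper's annular-iteration language, in the spirit of \Cref{prop:mainiteration}, the steps would be the following.

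First, subtract the $\log$ mode and the constant. The $\log$ coefficient is computed from the flux $c=\frac{1}{2\pi}\int_{\partial B_r'}\partial_\nu w$, which is independent of $r$ by harmonicity; this flux is the one obstruction a pure Liouville argument in annuli does not see.

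Second, replace \Cref{lincomp} with a Liouville lemma for harmonic functions on $\R^2\setminus\{0\}$ with sub-quadratic growth at infinity and bounded growth near $0$. Such functions are $b+c\log|y|+a\cdot y$.

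Third, iterate dyadically. The sublinear growth $|y|^{1/2}$ kills the linear mode at infinity, giving geometric decay $2^{-\alpha k}$ of the oscillation of $w-b_k-c\log|y|$ on the $k$-th annulus. Summing the increments $|b_{k+1}-b_k|$ yields the limit $b$ and the rate $|y|^{-\alpha}$.

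The main obstacle, if one insists on the iteration route, is handling the $\log$ mode uniformly across scales. I would fix $c$ once via the flux before iterating, which reduces everything to a zero-flux harmonic function. Because of this, I would present the Fourier argument as the actual proof, since it is short and rigorous.
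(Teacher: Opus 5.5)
Your proof is correct, and it takes a genuinely different route from the paper's. The paper reuses the machinery of \Cref{sec:annimprovhyper}, which has three ingredients:
\begin{enumerate}
\item a harmonic approximation lemma (\Cref{lincomp2D}), proved by contradiction via Arzel\`a--Ascoli together with a Liouville classification on $\R^2\setminus\{0\}$;
\item an improvement-of-flatness iteration on dyadic annuli, in which the affine, constant and logarithmic modes are re-fitted at every scale (\Cref{prop:mainiterationB} and \Cref{prop:dec2B});
\item a summation of the increments of the coefficients $\widetilde a_k,\widetilde b_k,\widetilde c_k$ across scales, with the growth bound $|w|\le C|y|^{1/2}$ used only at the end to kill the limiting linear term.
\end{enumerate}

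You instead expand $w$ directly in Fourier modes on circles. The growth bound kills every positive mode $r^k$ at once, and the negative modes are bounded by a geometric series.

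Your route has several advantages. It is shorter, and it avoids both the compactness step and the bookkeeping of coefficients across scales, including the loss from $\alpha$ to $\bar\alpha$. It identifies $b$ and $c$ explicitly through the zero mode (circle average) and the flux. It also gives the sharp rate $O(|y|^{-1})$, whereas the paper only reaches $O(|y|^{-\alpha})$ for each $\alpha<1$. Note that the paper's own Liouville step is itself a separation-of-variables argument, merely applied to a blow-up limit rather than to $w$.

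What the paper's scheme buys is robustness. The contradiction-compactness approximation step does not use exact harmonicity, so it would survive operators close to the Laplacian or small right-hand sides, and it keeps everything in one framework with \Cref{sec:annimprovhyper}. For the exactly harmonic $v_i^b$ needed in the proof of \Cref{thm:R3indclass}, your argument suffices.

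One cosmetic point: your justification of $\sum_k(|a_{-k}|+|b_{-k}|)r_1^{-k}\le C$ via Parseval and Cauchy--Schwarz is garbled, and you do not need it. Each Fourier coefficient on $\partial B'_{r_1}$ is at most $2\sup_{\partial B'_{r_1}}|w|$, so $(|a_{-k}|+|b_{-k}|)r_1^{-k}\le C'$ uniformly in $k$. Then for $r\ge 2r_1$ the factor $(r_1/r)^k$ gives $\sum_k(|a_{-k}|+|b_{-k}|)r^{-k}\le 2C'r_1/r$ directly.
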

We will prove \Cref{prop:logexp3D} at the end of the section. For now, we show how to conclude from this:\\

First, applying \Cref{prop:logexp3D} to $w=v_i^b$, we deduce the existence of $b_i^b,c_i^b\in\R$ such that
\begin{equation}\label{eq:2543h02th20}
    v_i^b(y)=b_i^b+c_i^b\log|y|+O(|y|^{-\alpha})\,.
\end{equation}

It remains to improve the asymptotics for $v_i^a$. Let $c_i:=c_i^a+c_i^b$; we want to show that $c_{i+1}-c_i> \sqrt{2}$.

{\bf Separation bounds}

Since $e^{-\sqrt{2}(v_{i+1}-v_i)}$ is integrable, and
$$e^{-\sqrt{2}(v_{i+1}-v_i)}=e^{-\sqrt{2}(c_{i+1}-c_i+o(1))\log|y|}\,,$$
we directly see that $c_{i+1}-c_i\geq \sqrt{2}$.\\

Assume for contradiction that the strict bound were false for at least one pair of consecutive layers, and let $i_1=\min\{i:c_{i+1}-c_{i}=\sqrt{2}\}$ and $i_2=\min\{i>i_1:c_{i+1}-c_{i}>\sqrt{2}\}$. These layers satisfy then
\begin{equation}\label{eq:174ggqbpg}
    e^{-(v_{i_1}-v_{i_1-1})}=O(|y|^{-2-\alpha})\quad\mbox{and}\quad e^{-(v_{i_2+1}-v_{i_2})}=O(|y|^{-2-\alpha})\quad\mbox{for some}\quad \alpha>0\,,
\end{equation}
and $c_{i+1}-c_i=\sqrt{2}$ for every $i\in\{i_1,...,i_2-1\}$.\\

We claim we have the one-sided bounds
$$v_{i_1}(y)\geq c_{i_1}\log|y|-C \quad\mbox{and}\quad v_{i_2}(y)\leq c_{i_2}\log|y|+C\,.$$
Indeed, observe that---by \eqref{eq:todaimprov} and \eqref{eq:174ggqbpg}---the top layer $f_{i_2}$ satisfies $\Delta f_{i_2}=\kappa_0e^{-\sqrt{2}(f_{i_2}-f_{i_2-1})}+O(|y|^{-2-\alpha})$, which has positive leading term on the right. Write
\begin{align*}
    v_{i_2}^a(y)&=\frac{1}{2\pi}\int_{\R^2\setminus \overline{B_{2R_0}'}} (\log|y-z|-\log |z|) \Delta v_{i_2}\,dz\\
    &= \frac{1}{2\pi}\log|y|\int_{\R^2\setminus \overline{B_{2R_0}'}} \Delta v_{i_2}\,dz+ \frac{1}{2\pi}\int_{\R^2\setminus \overline{B_{2R_0}'}} \log\frac{|y-z|}{|y||z|}\ \Delta v_{i_2}\,dz\\
    &= c_{i_2}^a\log|y|+ \frac{1}{2\pi}\int_{\R^2\setminus \overline{B_{2R_0}'}} \log\frac{|y-z|}{|y||z|}\ \left[\kappa_0 e^{-\sqrt{2}(f_{i_2}-f_{i_2-1})}+O(|y|^{-2-\alpha})\right]\,dz\,dz\\
    &\leq c_{i_2}^a\log|y|+ \frac{1}{2\pi}\int_{\R^2\setminus \overline{B_{2R_0}'}} \log\frac{|y-z|}{|y||z|}\ \kappa_0e^{-\sqrt{2}(f_{i_2}-f_{i_2-1})}\,dz+C\,.
\end{align*}
We can assume that $R_0\geq 2$, so that we only need to consider $|z|,|y|\geq 2$. But then, arguing as in \cite[Lemma 2.1]{Lin98}, we can bound
$$\frac{|y-z|}{|y||z|}\leq \frac{|y|+|z|}{|y||z|}\leq \frac{|y||z|}{|y||z|}=1\,,$$
and therefore $\log \frac{|y-z|}{|y||z|}\leq 0$, which shows that
\begin{align*}
    v_{i_2}^a(y)\leq c_{i_2}^a\log|y|+ C\,,\quad\mbox{or}\quad v_{i_2}\leq c_{i_2}\log|y|+C
\end{align*}
as desired. The lower bound for $v_{i_1}$ follows similarly.\\

We proceed with the contradiction argument:

Define $Err_i(y)$ via $v_i(y)=c_i\log|y|+Err_i(y)$. The one-sided bounds show that
\begin{equation}\label{eq:7231ftpqg}
    Err_{i_2}\leq C\quad\mbox{and}\quad Err_{i_1}\geq -C\,.
\end{equation}
On the other hand, by \eqref{eq:238hrfbqvap9} we see that, for any $i\in\{i_1,...,i_2-1\}$,
\begin{align*}
    +\infty>\int_{\R^2\setminus \overline{B_{R_0}}}e^{-\sqrt{2}(v_{i+1}-v_{i})}&=\int_{\R^2\setminus \overline{B_{R_0}}}e^{-\sqrt{2}\left[(c_{i+1}-c_{i})\log|y|+(Err_{i+1}-Err_{i})\right]}\\
    &=\int_{\R^2\setminus \overline{B_{R_0}}}\frac{1}{|y|^2}e^{-\sqrt{2}(Err_{i+1}-Err_{i})}\,.
\end{align*}
Define the measure $\mu(A):=\int_{A}\frac{1}{|y|^2}$, $A\subset \R^2\setminus \overline{B_{R_0}}$. Very much in particular, the above shows that, for any $M>0$, we have
\begin{align*}
    \mu(\{y:(Err_{i+1}-Err_{i})(y)\leq M\})&=\mu(x:\{e^{-\sqrt{2}(Err_{i+1}-Err_{i})(y)}\geq  e^{-\sqrt{2}M}\})\\
    &\leq e^{\sqrt{2}M}\int_{\R^2\setminus \overline{B_{R_0}}}\frac{1}{|y|^2}e^{-\sqrt{2}(Err_{i+1}-Err_{i})}\\
    &<\infty\,.
\end{align*}
By a union bound then
$$\mu\left(\left\{y:(Err_{i+1}-Err_{i})(y)\leq M\quad \mbox{for some}\quad i\in\{i_1,...,i_2-1\}\right\}\right)<\infty\quad\mbox{as well}\,;$$
since $\frac{1}{|y|^2}$ is not integrable in $\R^2\setminus \overline{B_{R_0}}$, or equivalently $\mu(\R^2\setminus \overline{B_{R_0}})=\infty$,
$$\mbox{given}\quad M>0\quad\mbox{there exists}\quad y\in \R^2\setminus \overline{B_{R_0}}\quad\mbox{with}\quad (Err_{i+1}-Err_{i})(y)> M \quad \forall i\in\{i_1,...,i_2-1\}\,.$$
This means, in particular, that for this $y$ we have
$$(Err_{i_2}-Err_{i_1})(y)=\sum_{i=i_1}^{i_2-1}(Err_{i+1}-Err_{i})(y)> M\,.$$
But then, since (by \eqref{eq:7231ftpqg}) we had the uniform bound $Err_{i_2}-Err_{i_1}\leq C$, the fact that $M>0$ can be taken arbitrarily large yields a contradiction.

{\bf Improvement of $v_i^a$}

Now that we know that $c_{i+1}-c_i> \sqrt{2}$ for every $i\in\{1,...,N-1\}$, by the asymptotics for the $v_i$ we conclude that $e^{-\sqrt{2}(v_{i+1}-v_i)}=O(|y|^{-2-\alpha})$ for some $\alpha>0$. Then, from \eqref{eq:todaimprov} and the integral defining $v_i^a$ we easily deduce that
$$v_i^a=b_i^a+c_i^a\log|x|+O(|y|^{-\alpha})\,,$$
which---letting $b_i:=b_i^a+b_i^b$---together with the claimed asymptotics for the $v_i^b$ shows \Cref{thm:R3indclass}.

{\bf A catenoidal improvement of flatness}

It remains to prove \Cref{prop:logexp3D}. The main idea is to run an enhanced version of the improvement of flatness in annuli iteration from \Cref{sec:annimprovhyper}, which allows for the presence of logarithms (and constant functions) at each scale.

Our full decay statement takes the following form in this case:
\begin{proposition}[Sublinearity implies decay]\label{prop:dec2B}
Let $v:\R^2\setminus \overline{B_{2 R_0}'}\to\R$ be harmonic. Let $\ep>0$, and assume that
\begin{align*}
    |v(x')|\leq \ep |x'|\quad\mbox{in}\quad \R^2\setminus \overline{B_{2R_0}'}\,.
\end{align*}
Given $\alpha\in(0,1)$ there is $C=C(\alpha,R_0)>0$, as well as some $\widetilde a_k\in\R^2$ and $\widetilde b_k, \widetilde c_k\in\R$ for every $k\in\N$ with $2^k\geq 4R_0$, such that
\begin{align*}
    |v(x')-\widetilde a_k\cdot x'-\widetilde b_k-\widetilde c_k \log|x'||\leq C\ep 2^{ -\alpha k}\quad\mbox{in}\quad B_{2^{k+1}}\setminus \overline{B_{2^{k-1}}}\,.
\end{align*}
\end{proposition}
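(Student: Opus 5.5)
The plan is to bypass the iterative improvement-of-flatness machinery altogether and use the explicit structure of harmonic functions on a planar exterior domain, namely a Fourier--Laurent expansion. In polar coordinates $x'=re^{i\theta}$, a harmonic $v$ on $\R^2\setminus \overline{B_{2R_0}'}$ can be written, for $r>2R_0$, as
\[
v(r,\theta)=b+c\log r+\sum_{m\geq 1}\left[(\alpha_m r^m+\beta_m r^{-m})\cos(m\theta)+(\gamma_m r^m+\delta_m r^{-m})\sin(m\theta)\right],
\]
with convergence in $L^2(\partial B_r')$ for every $r>2R_0$. This follows by separation of variables, since each Fourier coefficient of $v(r,\cdot)$ solves the corresponding radial ODE. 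The whole argument is then to read off the coefficients from the growth bound $|v|\leq \ep|x'|$.

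\textbf{Step 1: kill the growing modes.} By Parseval on $\partial B_r'$, every Fourier coefficient of $v(r,\cdot)$ is bounded by $C\ep r$. For $m\geq 2$ the $m$-th coefficient is $\alpha_m r^m+\beta_m r^{-m}$. Dividing by $r^m$ and letting $r\to\infty$ gives $\alpha_m=\gamma_m=0$. For $m=1$ the same argument gives $|\alpha_1|+|\gamma_1|\leq C\ep$. So $v=a\cdot x'+b+c\log|x'|+w$, where $|a|\leq C\ep$ and $w$ contains only the decaying modes $r^{-m}$, $m\geq 1$. I would take $\widetilde a_k=a$, $\widetilde b_k=b$, $\widetilde c_k=c$, independent of $k$, at least for large $k$.

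\textbf{Step 2: control the decaying part.} Evaluate on the fixed circle $r_*=4R_0$. There $\|v\|_{L^2(\partial B_{r_*}')}\leq C\ep R_0$ and $\|a\cdot x'\|_{L^2(\partial B_{r_*}')}\leq C\ep R_0$. Orthogonality of the modes $m\geq 1$ to constants and to the $m=1$ linear part lets me isolate the $r^{-m}$ pieces. More carefully, for $m\geq 2$ the coefficient is exactly $\beta_m r_*^{-m}$, and for $m=1$ it is $\alpha_1 r_*+\beta_1 r_*^{-1}$ with $\alpha_1$ already controlled. This gives $|\beta_m|r_*^{-m}+|\delta_m|r_*^{-m}\leq C\ep R_0$ for all $m\geq 1$. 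Hence for $r\geq 2r_*$,
\[
|w|\leq \sum_{m\geq1}C\ep R_0\,(r_*/r)^m\leq C\ep R_0^2\, r^{-1}.
\]
On $B_{2^{k+1}}'\setminus\overline{B_{2^{k-1}}'}$ with $2^{k-1}\geq 8R_0$ this is $\leq C(R_0)\ep 2^{-k}\leq C\ep 2^{-\alpha k}$ for any $\alpha\in(0,1)$; in fact we even get $\alpha=1$.

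\textbf{Step 3: the finitely many small scales.} For the remaining $k$ with $4R_0\leq 2^k<16R_0$ I simply choose $\widetilde a_k=0$, $\widetilde b_k=\widetilde c_k=0$. Then the error is $\leq \ep 2^{k+1}\leq C(R_0)\ep 2^{-\alpha k}$, since there are only boundedly many such $k$ and all are comparable to $R_0$.

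The main obstacle is Step 2: justifying the coefficient bounds uniformly in $m$ so that the tail sums, and making sure the constant and logarithmic modes never need to be bounded. This is why I keep $b,c$ exact rather than estimated. Working on one interior circle $r_*=4R_0$, strictly inside the domain and away from the boundary circle $2R_0$, and using the geometric factor $(r_*/r)^m$ with $r\geq 2r_*$ resolves it. If pointwise convergence of the series were an issue, I would instead bound $w$ via interior elliptic estimates from its $L^2$ norm on circles.
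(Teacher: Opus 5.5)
Your proof is correct, and it takes a genuinely different route from the paper.

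\textbf{The paper's route.} The paper reruns the annular improvement-of-flatness scheme of Section~\ref{sec:annimprovhyper}. A compactness-based harmonic approximation lemma (Lemma~\ref{lincomp2D}) rests on the Liouville classification of harmonic functions on $\R^2\setminus\{0\}$ with controlled growth at $0$ and at $\infty$. It feeds an iteration (Proposition~\ref{prop:mainiterationB}) that passes excess-decay and best-flatness information between dyadic scales. This produces scale-dependent coefficients $\widetilde a_k,\widetilde b_k,\widetilde c_k$, which are only afterwards shown to converge.

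\textbf{Your route.} You use the exact Laurent/Fourier structure of a harmonic function on a planar exterior domain. The growth bound $|v|\leq \ep|x'|$ does three things:
\begin{itemize}
\item it kills the modes $r^m$ for $m\geq 2$;
\item it bounds the linear mode by $C\ep$;
\item through the trace on the single circle $r_*=4R_0$, it controls every decaying coefficient uniformly in $m$, $|\beta_m|r_*^{-m}+|\delta_m|r_*^{-m}\lesssim \ep R_0$, so the tail is a geometric series for $r\geq 2r_*$.
\end{itemize}
The constant and logarithmic modes never need to be estimated.

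\textbf{What each approach buys.} Yours is shorter and sharper. You get $k$-independent coefficients for $2^k\geq 16R_0$ and the rate $C(R_0)\ep\,2^{-k}$, i.e.\ the endpoint $\alpha=1$, with $C$ independent of $\alpha$. Consequently Proposition~\ref{prop:logexp3D} follows at once with an $O(|y|^{-1})$ remainder instead of $O(|y|^{-\alpha})$ for every $\alpha<1$, since $|w|\leq C|y|^{1/2}$ forces the linear coefficient to vanish. Nothing is lost in the application, because $v_i^b$ there is exactly harmonic. The paper's route buys robustness instead. The compactness-plus-iteration argument never uses the spectral decomposition, so it survives perturbations such as $|\mathrm{Tr}(A\,D^2v)|$ small with $A$ close to the identity, as in Section~\ref{sec:annimprovhyper}, where no Laurent expansion is available. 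The paper presents it this way to unify the two settings.

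\textbf{On your convergence worry.} It is unnecessary. For each fixed $r>2R_0$ the trace $v(r,\cdot)$ is smooth, so its Fourier series converges absolutely and uniformly. Equivalently, $v-c\log|x'|$ is the real part of a holomorphic function whose Laurent series converges locally uniformly on $\{|x'|>2R_0\}$. Your geometric coefficient bound gives absolute convergence in any case. Likewise, the coefficient bounds follow directly from $|v|\leq\ep r$ via an $L^1$ estimate; Parseval is not needed.
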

As in \Cref{sec:annimprovhyper}, we will reduce this result to an ``improvement of flatness''-type iteration.

\begin{proposition}[Main iteration]\label{prop:mainiterationB}
    Given $\alpha\in(0,1)$, there are $k_0,K\in\N$ depending on $n$ and $\alpha$ such that the following holds. Set $\delta=2^{-\alpha-1}$. Assume we are given:
    \begin{itemize}
        \item Some base scale $k_1\in\N$, with $k_1\geq K$.
        \item Some ``best flatness'' $\ep>0$, with $2^{(-\alpha-1) k_1}\leq \ep$.
        \item Some $a_k\in \R^2$ and $b,c\in\R$ for all $k\geq k_1-k_0$, such that the following hold:
\begin{itemize}
    \item Excess decay between $k_1-k_0$ and $k_1$:
    $$|v(x')-a_k\cdot x'-b_k-c_k\log|x'||\leq \delta^{k-k_1}\ep 2^{k} \mbox{ for all } x'\in B_{2^{k+1}}\setminus \overline{B_{2^{k-1}}},\,\quad k_1-k_0\leq k \leq k_1\,.$$
    \item Preservation of best flatness: $$|v(x')-a_k\cdot x'-b_k-c_k\log|x'||\leq \ep 2^{k} \mbox{ for all } x'\in B_{2^{k+1}}\setminus \overline{B_{2^{k-1}}},\,\quad k_1+1\leq k \,.$$
\end{itemize}
    \end{itemize}
Then, we have the following:
    \begin{itemize}
        \item Set the new base scale $k_1':=k_1+1\in\N$, which satisfies $k_1'\geq K$.
        \item Set the new ``best flatness'' $\ep'=\delta\ep>0$, which satisfies $2^{(\alpha-1) k_1'}\leq \ep'$.
        \item Then, there are some new $a_k'\in \R^2$ and $b_k',c_k'\in\R$ for all $k\geq k_1'-k_0$, such that the following hold:
\begin{itemize}
    \item Excess decay between $k_1'-k_0$ and $k_1'$:
    $$|v(x')-a_k'\cdot x'-b_k'-c_k'\log|x'||\leq \delta^{k-k_1}\ep' 2^{k} \mbox{ for all } x'\in B_{2^{k+1}}\setminus \overline{B_{2^{k-1}}},\,\quad k_1'-k_0\leq k \leq k_1'\,.$$
    \item Preservation of best flatness: $$|v(x')-a_k'\cdot x'-b_k'-c_k'\log|x'||\leq \ep' 2^{k} \mbox{ for all } x'\in B_{2^{k+1}}\setminus \overline{B_{2^{k-1}}},\,\quad k_1'+1\leq k \,.$$
\end{itemize}
    \end{itemize}
\end{proposition}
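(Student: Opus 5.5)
This is a one-step improvement for harmonic functions on planar annuli. The plan is to mirror the proof of \Cref{prop:mainiteration}, with a modified harmonic-approximation lemma in which the limiting space of harmonic functions is spanned by $x'$, $1$ and $\log|x'|$. Since $v$ is exactly harmonic, there is no error term in the equation. This means no lower bound on $\ep$ in terms of $2^{-k_1}$ is really needed; that hypothesis only keeps the statement formally parallel to \Cref{prop:mainiteration}.

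\textbf{Step 1 (rescaling and the remainder).} Rescale by $2^{k_1}$: set $\widetilde v(x'):=2^{-k_1}v(2^{k_1}x')$. Subtract the model $P_0:=a_{k_1}\cdot x'+b_{k_1}+c_{k_1}\log|x'|$ at the base scale (rescaled accordingly). The remainder $w:=(\widetilde v-\widetilde P_0)/(C\ep)$ is then harmonic on $B_{2^{k_0}}\setminus\overline{B_{2^{-k_0}}}$.

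\textbf{Step 2 (growth bounds on $w$).} Compare the models at consecutive scales. On each overlap annulus, two models $a\cdot x'+b+c\log|x'|$ differ by a harmonic function bounded by the sum of the two errors. On a fixed annulus, the coefficients are controlled linearly by the sup norm, since $x_1$, $x_2$, $1$ and $\log|x'|$ are linearly independent there. Summing these differences as geometric series gives:
\begin{itemize}
\item for $-k_0\le k\le 0$, the bound $|w|\le |x'|^{\alpha'}$ on inner scales, using $\delta^{k}2^{k}=2^{-\alpha k}$;
\item for $k\ge 1$, the bound $|w|\le C|x'|\log(2+|x'|)\le |x'|^{2-\alpha'}$ on outer scales, by rescaling of the best-flatness hypothesis, exactly as in \Cref{lem:grphregnHyp}.
\end{itemize}
Note a subtlety: on inner scales the error \emph{grows} as $2^{-\alpha k}$ when $k\to-\infty$ (because $\delta<1/2$). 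So the correct inner condition is $|w|\le |x'|^{-\alpha}$ rather than $|x'|^{\alpha}$.

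\textbf{Step 3 (harmonic approximation by compactness).} I would prove a variant of \Cref{lincomp} by contradiction. Take harmonic $w_i$ on ever larger annuli satisfying $|w_i|\le|x'|^{-\alpha}$ near $0$ and $|w_i|\le|x'|^{2-\alpha}$ near $\infty$. They converge locally uniformly to a harmonic $w_\infty$ on $\R^2\setminus\{0\}$. Expand $w_\infty$ in a Laurent/Fourier series,
\[
w_\infty=b+c\log r+\sum_{m\ne0}(\alpha_m r^{|m|}+\beta_m r^{-|m|})e^{im\theta}.
\]
The growth bounds kill all modes $r^{\pm|m|}$ except $r^{\pm1}$. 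The bound $r^{2-\alpha}$ at infinity kills $|m|\ge2$. The bound $r^{-\alpha}$ at $0$ kills $r^{-|m|}$ for all $|m|\ge1$ because $\alpha<1$. Hence $w_\infty=a\cdot x'+b+c\log|x'|$, and we get $\lambda$-closeness to such a model on $B_{1/\lambda}\setminus\overline{B_\lambda}$.

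\textbf{Step 4 (updating the data).} Define the new model at scale $k_1'$ as $P_0$ plus $C\ep$ times the limit model (rescaled back). Choosing $\lambda\le 2\delta/C$ gives error $\le\ep' 2^{k_1'}$ at scale $k_1'$. Keep the old data at the smaller scales; this yields the excess decay between $k_1'-k_0$ and $k_1'$. Preservation of best flatness then follows exactly as in Step 4 of the proof of \Cref{prop:mainiteration}: the hypotheses hold with any $k_1''\ge k_1+1$ in place of $k_1$, so rerunning Steps 1--3 at each such scale gives the improved bound at every scale above $k_1'$.

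\textbf{Main obstacle.} The delicate point is the bookkeeping in Step 2. The inner-scale errors grow like $2^{-\alpha k}$, and the $\log$ coefficients summed over scales must stay controlled, since $\log|x'|$ is unbounded on the long annulus. What I would try first is to normalise the logs as $\log(|x'|/2^k)$ at each scale, so that coefficient jumps remain comparable to the errors. Then check that the resulting bounds fit the hypotheses of the compactness lemma with exponent $\alpha<1$; the limit argument itself is routine.
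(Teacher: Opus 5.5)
Your proposal is correct and follows the paper's proof essentially step for step. You rescale by $2^{k_1}$, subtract the base-scale model, get two-sided growth bounds by summing consecutive coefficient jumps, apply the compactness/Liouville lemma whose limits lie in the span of $x_1,x_2,1,\log|x'|$, update only the scale $k_1'$ with $\lambda\lesssim\delta$, and rerun at every $k_1''\geq k_1+1$ for preservation of best flatness (reading the excess-decay conclusion as $\delta^{k-k_1'}\varepsilon'$, as the paper's proof also does). The only difference is cosmetic: your log-normalised bookkeeping does give the inner bound $|w|\lesssim |x'|^{-\alpha}$ with constants independent of $k_0$, since the relevant series $\sum_{m\geq 0}(1+m)2^{-\alpha m}$ converges, whereas the paper absorbs the logarithmic loss by passing to the exponent $\frac{1+\alpha}{2}$; either works in the Liouville step because the exponent stays below $1$.
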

We first show how \Cref{prop:dec2B} is implied by \Cref{prop:mainiterationB}, and how \Cref{prop:logexp3D} is implied by \Cref{prop:dec2B}.
\begin{proof}[Proof of \Cref{prop:dec2B}, assuming \Cref{prop:mainiterationB}]
    Fix $\alpha\in (0,1)$. Let $k_0,K\in\N$ be given by \Cref{prop:mainiterationB}, and set $\delta=2^{-\alpha-1}$. By assumption (in \Cref{prop:dec2B}), we have
    \begin{equation}\label{eq:67ehjkbgavbB}
        |v(x')|\leq \ep 2^k \quad\mbox{in}\quad (B_{2^{k+1}}\setminus \overline{B_{2^{k-1}}})\quad\mbox{for every}\quad k\geq \lceil \log_2{R_0}\rceil\,.
    \end{equation}
    It is clear now how to proceed:
    \begin{itemize}
        \item Set $k_1:=\max\{K,k_0+\lceil \log_2{R_0}\rceil\}$, so that $k_1\geq K$.
        \item Up to making $k_1$ larger, we also have $2^{(\alpha-1)k_1}\leq \ep$.
        \item Just set $a_k,b_k,c_k$ equal to $0$. Since (by definition) $k_1-k_0\geq \lceil \log_2{R_0}\rceil$, by \eqref{eq:67ehjkbgavbB} it is immediate to see that the excess decay for $k_1-k_0\leq k \leq k_1$ and preservation of best flatness for $k_1+1\leq k$ hold (with best flatness $\ep$).
    \end{itemize}
    This means that the hypotheses of \Cref{prop:mainiterationB} are satisfied. Iterating the proposition, setting $k_{1,l}:=k_1+l$ and $\ep_l:=\delta^l \ep$, $l\in\N$, the ``preservation of best flatness'' part gives---at the $l$-th iteration---coefficients $\widetilde a_{k_{1,l}}\in\R^2$ and $\widetilde b_{k_{1,l}},\widetilde c_{k_{1,l}}\in\R$ such that
    $$|v(x')-\widetilde a_{k_{1,l}}\cdot x'-\widetilde b_{k_{1,l}}-\widetilde c_{k_{1,l}}\log |x'||\leq \ep_l2^{k_{1,l}}= \delta^l \ep 2^{k_{1,l}}\quad\mbox{in}\quad  B_{2^{k_{1,l}+1}}\setminus \overline{B_{2^{k_{1,l}-1}}}\,.$$
    Letting $C:=\delta^{-k_1}$, which does not depend on $l$, we can rewrite this as
    $$|v(x')-\widetilde a_{k_{1,l}}\cdot x'-\widetilde b_{k_{1,l}}-\widetilde c_{k_{1,l}}\log |x'||\leq  C\ep\delta^{k_{1,l}} 2^{k_{1,l}}\quad\mbox{in}\quad B_{2^{k_{1,l}+1}}\setminus \overline{B_{2^{k_{1,l}-1}}}\,,$$
    or (since $\delta=2^{-\alpha-1}$)
    $$|v(x')-\widetilde a_{k_{1,l}}\cdot x'-\widetilde b_{k_{1,l}}-\widetilde c_{k_{1,l}}\log |x'||\leq  C\ep 2^{-\alpha k_{1,l}}\quad\mbox{in}\quad  B_{2^{k_{1,l}+1}}\setminus \overline{B_{2^{k_{1,l}-1}}}\,,$$
    which clearly proves \Cref{prop:dec2B}.
\end{proof}
\begin{proof}[Proof of \Cref{prop:logexp3D}, assuming \Cref{prop:dec2B}]
    The condition $|v(x')|\leq C|x'|^{1/2}$ obviously implies that there is $R_0$ such that $|v(x')|\leq |x'|$ in $\R^2\setminus \overline{B_{R_0}}$.
    
    Then, \Cref{prop:dec2B} gives $\widetilde a_{k}\in\R^2$ and $\widetilde b_{k},\widetilde c_{k}\in\R$ for every $k\in\N$ such that
\begin{align}\label{eq:37y4tgweeubB}
    |v(x')-\widetilde a_{k}\cdot x'-\widetilde b_k-\widetilde c_k\log |x'||\leq  C 2^{- \alpha k}\quad\mbox{in}\quad B_{2^{k+1}}\setminus \overline{B_{2^{k-1}}}\,.
\end{align}
\noindent {\bf Step 1.} Coefficient comparison.

We will show that
\begin{equation}\label{eq:841tguqB}
    |a_{k+1}-a_k|\leq C 2^{(-\alpha-1)k}\log 2^k\,,\quad |b_{k+1}-b_k|\leq C 2^{- \alpha k}\log 2^k\,,\quad |c_{k+1}-c_k|\leq C 2^{- \alpha k}\,.
\end{equation}
To see this, consider two consecutive scales in \eqref{eq:37y4tgweeubB}. The triangle inequality gives then
\begin{align}\label{eq:781236r098froiuavgB}
    |(\widetilde a_{k+1}-\widetilde a_{k})\cdot x'+(\widetilde b_{k+1}-\widetilde b_k)+(\widetilde c_{k+1}-\widetilde c_k)\log |x'||\leq  C 2^{- \alpha k}\quad\mbox{in}\quad B_{2^{k}}\setminus \overline{B_{2^{k-1}}}\,.
\end{align}
In particular, we find that
\begin{align*}
    |(\widetilde b_{k+1}-\widetilde b_k)+(\widetilde c_{k+1}-\widetilde c_k)\log |x'||\leq  C 2^{- \alpha k}\quad\mbox{in}\quad \{(\widetilde a_{k+1}-\widetilde a_{k})\cdot x'=0\}\cap (B_{2^{k}}\setminus \overline{B_{2^{k-1}}}) \,,
\end{align*}
thus (testing with $x'$ in the inner and outer boundaries of the domain) we find
\begin{align*}
    |(\widetilde b_{k+1}-\widetilde b_k)+(\widetilde c_{k+1}-\widetilde c_k)\log 2^k|\leq  C 2^{- \alpha k} \quad\mbox{and}\quad |(\widetilde b_{k+1}-\widetilde b_k)+(\widetilde c_{k+1}-\widetilde c_k)\log 2^{k-1}|\leq  C 2^{- \alpha k}\,.
\end{align*}
Combining both conditions we see that
\begin{align}\label{eq:t1784yp1t89ytB}
    |\widetilde c_{k+1}-\widetilde c_k|\leq  C 2^{- \alpha k}\quad\mbox{and}\quad |(\widetilde b_{k+1}-\widetilde b_k)|\leq  C 2^{- \alpha k}\log 2^k\,.
\end{align}
Finally, testing \eqref{eq:781236r098froiuavgB} with some $x'\in B_{2^{k}}\setminus \overline{B_{2^{k-1}}}$ satisfying $(\widetilde a_{k+1}-\widetilde a_{k})\cdot x'=|\widetilde a_{k+1}-\widetilde a_{k}||x'|$ and using \eqref{eq:t1784yp1t89ytB}, we deduce the bound for $|\widetilde a_{k+1}-\widetilde a_{k}|$ as well, concluding the proof of \eqref{eq:841tguqB}.

\noindent {\bf Step 2.} Summing the coefficients.

By \eqref{eq:841tguqB}, given $k_0\in \N$, 
$$\sum_{k\geq k_0}|\widetilde c_{k+1}-\widetilde c_k|\leq C_\alpha 2^{-\alpha k_0}\,.$$
Let $\bar \alpha=\max\{\alpha/2,1-2(1-\alpha)\}$; the point is simply that $\bar \alpha<\alpha$ but still $\bar\alpha\to 1$ as $\alpha\to 1$. We can bound then
$$|\widetilde a_{k+1}-\widetilde a_k|\leq C_\alpha 2^{(-\bar\alpha-1)k_0}\quad\mbox{and}\quad|b_{k+1}-b_k|\leq C 2^{- \alpha k}\log 2^k\leq C2^{-\bar\alpha k}\,,$$
which gives
$$\sum_{k\geq k_0}|\widetilde a_{k+1}-\widetilde a_k|\leq C_\alpha 2^{(-\bar \alpha-1) k_0}\quad\mbox{and}\quad \sum_{k\geq k_0} |\widetilde b_{k+1}-\widetilde b_k|\leq C_\alpha 2^{-\bar \alpha k_0}\,.$$

Now, the above show in particular that the sequences $\widetilde a_k,\widetilde b_k,\widetilde c_k$ are Cauchy. This shows the existence of some $\widetilde a_\infty\in\R^2$ and $\widetilde b_\infty, \widetilde c_\infty\in\R$ with
\begin{equation}\label{eq:q7toagvgB}
    |\widetilde a_\infty-\widetilde a_k|\leq C_\alpha 2^{(-\alpha-1)k}\,,\quad\,|\widetilde b_\infty-\widetilde b_k|\leq C_\alpha 2^{-\bar\alpha k}\,,\quad |\widetilde c_\infty-\widetilde c_k|\leq C_\alpha 2^{-\alpha k}\,.
\end{equation}
Substituting into \eqref{eq:37y4tgweeubB}, we deduce that
\begin{equation*}
    |v(x')-\widetilde a_\infty\cdot x'-\widetilde b_\infty-\widetilde c_\infty\log |x'||\leq  C 2^{-\bar\alpha k}\quad\mbox{in}\quad B_{2^{k+1}}\setminus \overline{B_{2^{k-1}}}\,.
\end{equation*}
This means precisely that $v=\widetilde a_\infty\cdot x'+\widetilde b_\infty+\widetilde c_\infty\log |x'|+O(|x'|^{-\bar\alpha})$. Since we are assuming that $|v(x')|\leq C|x'|^{1/2}$, we immediately see that actually $\widetilde a_\infty\equiv 0$. Finally, since $\bar\alpha\to 1$ as $\alpha\to 1$, up to changing the value of $\alpha$ we conclude the proof.
\end{proof}

{\bf Proof of \Cref{prop:mainiterationB}.}

We next show \Cref{prop:mainiterationB}. First, the analog of \Cref{lincomp} is:
\begin{lemma}[Harmonic approximation]\label{lincomp2D} Let $\lambda>0$ and $\alpha \in (0,1)$. There exists $\delta_0>0$, depending on $\lambda, \alpha, \beta$, such that the following holds. Let $v:B_{\frac{1}{\delta_0}}\setminus \overline{B_{\delta_0}}\subset \R^2 \to \R$ satisfy:
\begin{itemize}
    \item $v$ is harmonic.
    
    \item $\|v\|_{L^\infty(B_{2\rho}\setminus \overline{B_\rho})}\leq \rho^{1+\alpha}$, for $1\leq\rho\leq\frac{1}{\delta_0}$.
    
    \item $\|v\|_{L^\infty(B_{2\rho}\setminus \overline{B_\rho})}\leq \rho^{-\alpha}$, for $\delta_0\leq\rho\leq 1$.
\end{itemize}
Then there exist $a\in\R^{2}$ and $b,c\in\R$ such that $\|v-a\cdot x-b-c\log |x|\|_{L^\infty(B_{2}\setminus \overline{B_{1/2}})
}\leq \lambda$.
\end{lemma}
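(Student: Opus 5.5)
We argue by contradiction and compactness, exactly as in \Cref{lincomp}. If the statement failed for some $\lambda,\alpha$, then taking $\delta_{0,i}=\frac1i$ we would obtain harmonic functions $v_i$ on $B_{i}\setminus\overline{B_{1/i}}\subset\R^2$ with
\[
\|v_i\|_{L^\infty(B_{2\rho}\setminus \overline{B_\rho})}\leq \rho^{1+\alpha}\ \ (1\leq \rho\leq i),\qquad \|v_i\|_{L^\infty(B_{2\rho}\setminus \overline{B_\rho})}\leq \rho^{-\alpha}\ \ (\tfrac1i\leq\rho\leq 1),
\]
and such that no choice of $a\in\R^2$, $b,c\in\R$ gives $\|v_i-a\cdot x-b-c\log|x|\|_{L^\infty(B_2\setminus\overline{B_{1/2}})}\leq\lambda$. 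This case is even simpler than \Cref{lincomp}, because there is no error term: the $v_i$ are exactly harmonic.

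\textbf{Compactness.} The growth bounds give uniform $L^\infty$ control of $v_i$ on every compact subset of $\R^2\setminus\{0\}$. Interior estimates for harmonic functions then give uniform $C^k$ bounds there. By Arzel\`a--Ascoli and a diagonal argument, a subsequence converges in $C^2_{loc}(\R^2\setminus\{0\})$ to a harmonic function $v_\infty$. The limit satisfies $|v_\infty(x)|\leq C|x|^{1+\alpha}$ for $|x|\geq1$ and $|v_\infty(x)|\leq C|x|^{-\alpha}$ for $|x|\leq 1$.

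\textbf{Liouville step.} We classify $v_\infty$ by separation of variables in polar coordinates. Every harmonic function on $\R^2\setminus\{0\}$ has the form
\[
v_\infty=c_0+d_0\log r+\sum_{m\geq1}\big(a_m r^m+b_m r^{-m}\big)\cos(m\theta)+\big(a_m' r^m+b_m' r^{-m}\big)\sin(m\theta),
\]
with the series converging locally uniformly. Projecting onto each Fourier mode on the circle of radius $r$ bounds each coefficient combination by $\sup_{\partial B_r}|v_\infty|$.

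For $m\geq1$, the combination $a_mr^m+b_mr^{-m}$ is $O(r^{-\alpha})$ as $r\to0$, which forces $b_m=0$ since $r^{-m}\gg r^{-\alpha}$. It is also $O(r^{1+\alpha})$ as $r\to\infty$, which forces $a_m=0$ for $m\geq2$ since $r^{m}\gg r^{1+\alpha}$. The same holds for the primed coefficients. For $m=0$, the term $c_0+d_0\log r$ is compatible with both bounds, because $\log r$ is $o(r^{-\alpha})$ at $0$ and $o(r^{1+\alpha})$ at $\infty$. Hence $v_\infty=a\cdot x+b+c\log|x|$.

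\textbf{Contradiction.} Uniform convergence of $v_i$ to $v_\infty$ on $\overline{B_2}\setminus B_{1/2}$ gives $\|v_i-a\cdot x-b-c\log|x|\|_{L^\infty(B_2\setminus\overline{B_{1/2}})}\leq\lambda$ for $i$ large, contradicting the choice of $v_i$.

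The only point needing care is the Liouville step. The exponent $\alpha\in(0,1)$ is exactly what keeps the $r^{\pm1}$ modes at the correct ends and kills $r^{-1}$ and $r^{2}$, while $\log r$ and constants survive. This is why the excess decay in \Cref{prop:mainiterationB} must allow $b$ and $c\log|x|$ terms at each scale.
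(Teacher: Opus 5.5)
Your proposal is correct and follows the paper's route. Contradiction and compactness (simpler here, since the $v_i$ are exactly harmonic) reduce the statement to a Liouville classification of harmonic functions on $\R^2\setminus\{0\}$ with the given growth at $0$ and at $\infty$, which the paper also obtains by separation of variables; your Fourier-mode analysis spells out this step, which the paper only sketches, and correctly shows that $\alpha\in(0,1)$ kills the modes $r^{-m}$ for $m\geq 1$ and $r^{m}$ for $m\geq 2$, while constants, $\log r$ and the linear modes survive.
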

\begin{proof}
Following the same argument as in the proof of \Cref{lincomp}, we reduce the lemma to classifying a harmonic function $v_\infty:\R^2\setminus\{0\}\to\R$ satisfying the same growth conditions as $v$. The difference now is in the application of the Liouville-type classification for harmonic functions in $\R^2\setminus \{0\}$ which satisfy the present growth assumptions---which follows e.g. by separation of variables---since it gives that $v_\infty=a\cdot x+b+c\log |x|$ for some suitable $a,b,c$. The difference with respect to \Cref{lincomp} is that constant functions and $\log |x|$ are harmonic functions in $\R^2\setminus\{0\}$ which grow slower than $|x|^{-\alpha}$ as $|x|\to 0$, making them admissible now in the expansion of $v_\infty$.
\end{proof}
{\bf Preliminaries: Rescaling}

For convenience, we start by considering the new function $\widetilde v:=\frac{1}{2^{k_1}}v(2^{k_1} x)$.
\begin{remark}\label{rmk:89ttqovbaB}
    The original hypotheses---in scales \textbf{from $-k_0$ to $\infty$} now---become, for some suitable new coefficients $\widetilde a_k, \widetilde b_k, \widetilde c_k$:
    \begin{itemize}
    \item Excess decay between $-k_0$ and $0$:
    \begin{equation}\label{eq:52ei7t6siduB}
        |\widetilde v (x')-\widetilde a_k\cdot x'-\widetilde b_k-\widetilde c_k\log|x'||\leq \delta^{k}\ep 2^{k} \mbox{ in } B_{2^{k+1}}\setminus \overline{B_{2^{k-1}}},\,\quad -k_0\leq k \leq 0\,.
    \end{equation}
    \item Preservation of best flatness:
    \begin{equation}\label{eq:52ei7t6sidu2B}
        |\widetilde v (x')-\widetilde a_k\cdot x'-\widetilde b_k-\widetilde c_k\log|x'||\leq \ep 2^{k} \mbox{ in } B_{2^{k+1}}\setminus \overline{B_{2^{k-1}}},\,\quad 1\leq k \,.
    \end{equation}
\end{itemize}
\end{remark}

\begin{lemma}\label{lem:grphregnHypB}
    Let $f:B_{2^{k_0}}\setminus \overline{B_{2^{-k_0}}}\to\R$ be defined by
    \begin{equation}\label{eq:894tibhabHypB}
        f(x'):=\widetilde v (x')-\widetilde a_0\cdot x'-\widetilde b_0-\widetilde c_0\log|x'|\,,
    \end{equation}
    which is also harmonic.

    There is $C_2$ depending on $k_0$ such that
\begin{equation}\label{eq:1278tq3gq}
    f(x')\leq C_2\ep|x'|^{-\frac{1+\alpha}{2}}\quad\mbox{for}\quad 2^{-k_0}\leq|x'|\leq 1
\end{equation}
and
\begin{equation}\label{eq:1278tq3gq2}
    f(x')\leq C_2\ep|x'|\log^2(|x'|+1)\quad\mbox{for}\quad 1\leq|x'|\leq 2^{k_0}\,.
\end{equation}
\end{lemma}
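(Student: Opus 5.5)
The plan mirrors the proof of \Cref{lem:grphregnHyp}: compare the coefficients at consecutive dyadic scales, sum the resulting geometric series to control $(\widetilde a_k-\widetilde a_0,\widetilde b_k-\widetilde b_0,\widetilde c_k-\widetilde c_0)$, and then use the triangle inequality on each annulus. Harmonicity of $f$ is immediate, since $\widetilde v$, linear functions, constants and $\log|x'|$ are all harmonic in $\R^2\setminus\{0\}$. The growth bounds then come from writing
$$f=(\widetilde v-\widetilde a_k\cdot x'-\widetilde b_k-\widetilde c_k\log|x'|)+(\widetilde a_k-\widetilde a_0)\cdot x'+(\widetilde b_k-\widetilde b_0)+(\widetilde c_k-\widetilde c_0)\log|x'|$$
on the annulus $B_{2^{k+1}}\setminus\overline{B_{2^{k-1}}}$. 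The first bracket is bounded by \eqref{eq:52ei7t6siduB}--\eqref{eq:52ei7t6sidu2B}.

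\textbf{Step 1 (consecutive coefficients).} Exactly as in Step 1 of the proof of \Cref{prop:logexp3D}, apply the hypothesis at scales $k$ and $k+1$ on the common annulus $B_{2^{k+1}}\setminus\overline{B_{2^{k}}}$. Testing on the line $\{(\widetilde a_{k+1}-\widetilde a_k)\cdot x'=0\}$ at both radii separates the constant and logarithmic parts; then testing in the direction of $\widetilde a_{k+1}-\widetilde a_k$ gives the linear part. Writing $\eta_k$ for the excess bound at scale $k$, this yields
$$|\widetilde c_{k+1}-\widetilde c_k|\leq C\eta_k,\qquad |\widetilde b_{k+1}-\widetilde b_k|\leq C\eta_k(1+|k|),\qquad |\widetilde a_{k+1}-\widetilde a_k|\leq C\eta_k(1+|k|)2^{-k}.$$
Here $\eta_k=\delta^k\ep2^k=\ep2^{-\alpha k}$ for $-k_0\leq k\leq 0$, and $\eta_k=\ep2^k$ for $k\geq1$.

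\textbf{Step 2 (summation, inner range).} For $-k_0\leq k\leq 0$ I sum from $k$ to $0$. The constant and logarithmic differences are bounded by $C\ep(1+|k|)^2 2^{-\alpha k}$, and the linear one by $C\ep(1+|k|)2^{(-\alpha-1)k}$. On $|x'|\sim 2^k\leq1$ the linear term contributes $|x'|$ times this, i.e. again $O(\ep|k|2^{-\alpha k})$, and $\log|x'|\sim k$. Altogether $|f|\leq C\ep(1+|k|)^3 2^{-\alpha k}$. Since $k_0$ is fixed, the polynomial factors in $|k|\leq k_0$ are absorbed into $C_2=C_2(k_0)$. Alternatively, $(1+|k|)^32^{-\alpha k}\leq C2^{-\frac{1+\alpha}{2}k}$ because $\frac{1+\alpha}{2}>\alpha$, which gives \eqref{eq:1278tq3gq} even uniformly in $k_0$.

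\textbf{Step 3 (summation, outer range).} For $1\leq k\leq k_0$, summing upward from $0$ gives: the $a$-differences contribute $C\ep\sum_{j\leq k}(1+j)\lesssim\ep k^2$, hence $\ep k^2|x'|$ on the annulus; the $c$-differences contribute $\ep 2^k$ times $\log|x'|\sim k$; and the $b$-differences contribute $\lesssim \ep k2^k$. Altogether $|f|\leq C\ep|x'|\log^2(|x'|+1)$, which is \eqref{eq:1278tq3gq2}.

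The main obstacle is Step 1 on annuli where $|x'|$ is close to $1$. There $\log|x'|\approx0$ and the constant and logarithmic terms are nearly indistinguishable, so the "two radii" separation degenerates. I would handle this by using radii $2^{k}$ and $2^{k+1}$, whose logarithms differ by $\log2$, with constants depending only on $\log 2$. For the rotated $a$-test, I would note that $a\cdot x'$ is odd while the other terms are radial, so comparing $\pm x'$ isolates $|\widetilde a_{k+1}-\widetilde a_k|$ cleanly.
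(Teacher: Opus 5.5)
Your proposal is correct and follows essentially the same route as the paper: compare coefficients at consecutive dyadic scales exactly as in Step 1 of the proof of \Cref{prop:logexp3D}, sum the resulting series inward and outward from scale $0$, and substitute back into \eqref{eq:52ei7t6siduB}--\eqref{eq:52ei7t6sidu2B} via the triangle inequality. Keep your observation that the constants are independent of $k_0$, via $(1+|k|)^3 2^{-\alpha k}\lesssim_\alpha 2^{-\frac{1+\alpha}{2}k}$ in the inner range and $k^2 2^k\lesssim |x'|\log^2(|x'|+1)$ in the outer range. This matters because in the proof of \Cref{prop:mainiterationB} the scale $k_0$ is chosen only after $\lambda$, and hence after $C_2$.
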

\begin{proof}
Let $\alpha_*=\frac{1+\alpha}{2}>\alpha$. Then, arguing exactly as in the proof where we showed that \Cref{prop:logexp3D} holds (assuming that \Cref{prop:dec2B} does), we obtain:
\begin{equation*}
    |\widetilde a_0-\widetilde a_k|\leq C_\alpha\ep 2^{(-\alpha_*-1)k}\,,\quad\,|\widetilde b_0-\widetilde b_k|\leq C_\alpha \ep 2^{-\alpha_* k}\,,\quad |\widetilde c_0-\widetilde c_k|\leq C_\alpha \ep 2^{-\alpha k}\quad\mbox{for every}\quad -k_0\leq k\leq 0\,,
\end{equation*}
and
\begin{equation*}
    |\widetilde a_0-\widetilde a_k|\leq C_\alpha\ep \log^2 2^k\,,\quad\,|\widetilde b_0-\widetilde b_k|\leq C_\alpha \ep 2^{k}\log 2^k\,,\quad |\widetilde c_0-\widetilde c_k|\leq C_\alpha \ep 2^k\quad\mbox{for every}\quad 1\leq k\,.
\end{equation*}
Substituting into \eqref{eq:52ei7t6siduB}--\eqref{eq:52ei7t6sidu2B}, we deduce that
\begin{equation*}
    |v(x')-\widetilde a_0\cdot x'-\widetilde b_0-\widetilde c_0\log |x'||\leq  C\ep_0 2^{-\alpha_* k}\quad\mbox{in}\quad B_{2^{k+1}}\setminus \overline{B_{2^{k-1}}}
\end{equation*}
and
\begin{equation*}
    |v(x')-\widetilde a_0\cdot x'-\widetilde b_0-\widetilde c_0\log |x'||\leq  C\ep_0 2^{ k}\log^22^k\quad\mbox{in}\quad B_{2^{k+1}}\setminus \overline{B_{2^{k-1}}}\,.
\end{equation*}
Recalling the definition of $f$, we immediately conclude \eqref{eq:1278tq3gq}--\eqref{eq:1278tq3gq2}.
\end{proof}
We can finally give:
\begin{proof}[Proof of \Cref{prop:mainiterationB}]
As above, we consider $\widetilde v(x):=\frac{1}{2^{k_1}}v(2^{k_1} x)$ in place of $v$, which satisfies the hypotheses in \Cref{rmk:89ttqovbaB}. Let $f:B_{2^{k_0}}'\setminus \overline{B_{2^{-k_0}}'}\to\R$ be defined by
$$f(x'):=\widetilde v (x')-\widetilde a_0\cdot x'-\widetilde b_0-\widetilde c_0\log|x'|\,.$$
By \Cref{lem:grphregnHypB},
\begin{equation}\label{eq:89ygqiubrgipabB}
    f(x')\leq C_2\ep|x'|^{-\frac{1+\alpha}{2}}\quad\mbox{for}\quad 2^{-k_0}\leq|x'|\leq 1
\end{equation}
and
\begin{equation}\label{eq:89ygqiubrgipab2B}
    f(x')\leq C_2|x'|\log^2(|x'|+1)\leq C_2|x'|^{1+\frac{1+\alpha}{2}}\quad\mbox{for}\quad 1\leq|x'|\leq 2^{k_0}\,.
\end{equation}

Define the vertical scaling $F(x')=\frac{f(x')}{C_2\ep}$, which is also harmonic.
We want to apply \Cref{lincomp2D} to $F$. Observe that \eqref{eq:89ygqiubrgipabB}--\eqref{eq:89ygqiubrgipab2B} transform into
\begin{equation*}
    F(x')\leq |x'|^{-\frac{1+\alpha}{2}}\quad\mbox{for}\quad 2^{-k_0}\leq|x'|\leq 1
\end{equation*}
and
\begin{equation*}
    F(x')\leq |x'|^{1+\frac{1+\alpha}{2}}\quad\mbox{for}\quad 1\leq|x'|\leq 2^{k_0}\,.
\end{equation*}
Let $\lambda>0$ to be chosen. Let $\delta_0=\delta_0(\lambda,\alpha,\beta,n)\in(0,\lambda]$ be given by \Cref{lincomp2D}.\\

Set $k_0:=\lceil\log_2 \delta_0\rceil$; in particular, $2^{-k_0}\leq \delta_0\leq \frac{1}{\delta_0}\leq 2^{k_0}$.
Note that the hypotheses of \Cref{lincomp2D} are satisfied (e.g. with $\frac{1+\alpha}{2}$ in place of $\alpha$). Hence, we get appropriate $a,b,c$ such that $\|F-a\cdot x'-b-c\log|x'|\|_{L^\infty(B_{\frac{1}{\lambda}}\setminus \overline{B_{\lambda}})
}\leq \lambda$. Equivalently:
\begin{equation*}
    \|f-\bar a\cdot x'-\bar b -\bar c\log|x'|\|_{L^\infty(B_{\frac{1}{\lambda}}\setminus \overline{B_{\lambda}})
}\leq C_2\ep\lambda\quad\mbox{for some}\quad \bar a\in\R^{2}\mbox{ and }\bar b,\bar c\in\R.
\end{equation*}

Recall that we set $k_1'=k_1+1$ and $\ep'=\delta \ep$. Choosing $\lambda=\min\{2\delta C_2^{-1},4\}$ in the previous step, and recalling the definitions of $f$ and $\widetilde v(x)$, we find that
\begin{equation*}
    \|v- a_{k_1'}'\cdot x'- b_{k_1'}' - c_{k_1'}'\log|x'|\|_{L^\infty(B_{4}\setminus \overline{B_{\frac{1}{4}}})
}\leq (\ep 2\delta)2^{k_1}=\ep'2^{k_1'} \quad\mbox{for some new}\quad a_{k_1'}'\in\R^{2}\mbox{ and }b_{k_1'}', c_{k_1'}'\in\R.
\end{equation*}
Now, for $k\leq k_1'-1$ instead, we choose to just keep $(a_k',b_k',c_k'):=(a_k,b_k,c_k)$. This shows the excess decay condition for $k_1'-k_0\leq k \leq k_1'$, and we then conclude the preservation of best flatness part for $k\geq k_1'+1$ just by applying the same reasoning but with $k_1'':=k$ in place of $k_1$, exactly as in the last step of the proof of \Cref{prop:mainiteration}.
\end{proof}


\addcontentsline{toc}{section}{\large References}
\printbibliography

\end{document}